 \newtheorem{thm}{Theorem}[section]
 \newtheorem{cor}[thm]{Corollary}
 \newtheorem{prop}[thm]{Proposition}
 \theoremstyle{definition}
 \newtheorem{defn}[thm]{Definition}
 \theoremstyle{remark}
 \newtheorem{rem}[thm]{Remark}
 \newtheorem{ejem}[thm]{Example}
 \newtheorem{ejems}[thm]{Examples}
\numberwithin{equation}{subsection}
\newcommand{\OO}{{\mathcal O}}
\newcommand{\M}{{\mathcal M}}
\newcommand{\U}{{\mathcal U}}
\newcommand{\Bc}{{\mathcal B}}
\newcommand{\HH}{{\mathcal H}}
\newcommand{\W}{{\mathcal W}}
\newcommand{\K}{{\mathcal K}}
\newcommand{\Nc}{{\mathcal N}}
\newcommand{\ZZ}{{\mathbb Z}}
\newcommand{\Lc}{{\mathcal L}}
\newcommand{\T}{{\mathcal T}}
\newcommand{\LL}{{\mathbb L}}
\newcommand{\RR}{{\mathbb R}}
\DeclareMathOperator{\ilim}{\underset\to\lim}
\DeclareMathOperator{\Spec}{{Spec}}
\DeclareMathOperator{\Coker}{{Coker}}
\newcommand{\pp}{{\mathfrak p}}
\newcommand{\wh}{\widehat}
\newcommand{\punto}{{\displaystyle \cdot}}
\newcommand{\proda}[2][]{\underset{#2}{\overset{#1}{\prod}}}
\newcommand{\suma}[2][]{\underset{#2}{\overset{#1}{\sum}}}
\newcommand{\enumera}{\begin{enumerate}}
\newcommand{\eenumera}{\end{enumerate}}
\newcommand{\C}{{\mathcal C}}
\DeclareMathOperator{\Hom}{{Hom}}
\DeclareMathOperator{\di}{{d}}
\DeclareMathOperator{\Id}{{Id}}
\begin{document}

\title[Finite spaces and schemes]
 {Finite spaces and schemes}

\author{ Fernando Sancho de Salas}
\address{Departamento de Matem\'{a}ticas and Instituto Universitario de F\'isica Fundamental y Matem\'aticas (IUFFyM), Universidad de Salamanca,
Plaza de la Merced 1-4, 37008 Salamanca, Spain}

\email{fsancho@usal.es}

\subjclass[2010]{14-XX,  05-XX, 06-XX}

\keywords{Finite space, quasi-coherent module, scheme}

\thanks {The  author was supported by research project MTM2013-45935-P (MINECO)}




\begin{abstract} A ringed finite space is a ringed space whose underlying topological space is finite. The category of ringed finite spaces contains, fully faithfully, the category of finite topological spaces and the category of affine schemes. Any ringed space, endowed with a finite open covering, produces a ringed finite space. We introduce the notions of schematic finite space and schematic morphism, showing that they behave, with respect to quasi-coherence, like schemes and morphisms of schemes do.  Finally, we construct a fully faithful and essentially surjective functor from a localization of a full subcategory of the category of schematic finite  spaces and schematic morphisms to  the category of quasi-compact and quasi-separated schemes.

\end{abstract}

\maketitle

\section*{Introduction}

This paper deals with ringed finite spaces and quasi-coherent modules on them. Let us  motivate why these structures deserve some attention, recalling two results (Theorems 1 and 2 below) of \cite{Sancho}. Let $S$ be a topological space and let $\U=\{ U_1,\dots, U_n\}$ be a finite covering by open subsets. Let us consider the following equivalence relation on $S$: we say that $s\sim s'$ if $\U$ does not distinguish $s$ and $s'$; that is, if we denote $U^s=\underset{ U_i\ni s}\cap U_i$, then  $s\sim s'$ iff $U^s=U^{s'}$.  Let $X=S/\negmedspace\sim$ be the quotient set with the topology given by the following partial order: $[s]\leq [s']$ iff $U^s\supseteq U^{s'}$.  This is a finite $T_0$-topological space (i.e., a finite poset) and the quotient map $\pi\colon S\to X$, $s\mapsto [s]$  is continuous.

Assume now that $S$ is a path connected, locally path connected and locally simply connected topological space  and let $\U$ be a finite covering such that the $U^s$ are simply connected. Then:
\medskip

 {\bf Theorem 1.} {\sl  The functors

\[\aligned \left\{\aligned \text{Locally constant sheaves}\\ \text{of abelian groups on $S$}\endaligned \right\} & \overset{\longrightarrow}\leftarrow \left\{ \aligned \text{Locally constant sheaves}\\ \text{of abelian groups on $X$}\endaligned \right\} \\ \M &\to \pi_*\M \\ \pi^*\Nc &\leftarrow \Nc \endaligned \]
are mutually inverse. In other words, $\pi_1(S,s)\to \pi_1(X,\pi(s))$ is an isomorphism between the fundamental groups of $S$ and $X$. Moreover, if the $U^s$ are homotopically trivial, then $\pi\colon S\to X$ is a weak homotopy equivalence, i.e., $\pi_i(S)\to \pi_i(X)$ is an isomorphism for any $i\geq 0$.
}\medskip

Now, if we take the constant sheaf $\ZZ$ on $X$, it turns out that a sheaf of abelian groups on $X$ is locally constant if and only if it is a quasi-coherent $\ZZ$-module. In conclusion, the category of representations of $\pi_1(S)$ on abelian groups is equivalent to the category of quasi-coherent $\ZZ$-modules on the finite topological space $X$.

Assume now that $S$ is a scheme and that the $U^s$ are affine schemes (a $\U$ with this condition exists if and only if $S$ is quasi-compact and quasi-separated). Let $\OO_S$ be the structural sheaf of $S$ and put $\OO=\pi_*\OO_S$, which is a sheaf of rings on $X$.   Now the result is:

\medskip
{\bf Theorem 2.} {\sl Let $S$ be a scheme,  $\U$ a finite covering such that the $U^s$ are affine schemes and $(X,\OO)$ the ringed finite space constructed above. The functors \[\aligned \{\text{Quasi-coherent $\OO_S$-modules} \} & \overset{\longrightarrow}\leftarrow \{\text{Quasi-coherent $\OO$-modules} \} \\ \M &\to \pi_*\M \\ \pi^*\Nc &\leftarrow \Nc \endaligned \]
are mutually inverse, i.e., the category of quasi-coherent modules on $S$ is equivalent to the category of quasi-coherent $\OO$-modules on $X$. Moreover (see Example \ref{coh-schemes}), this equivalence preserves cohomology: for any quasi-coherent module $\M$ on $S$ one has
\[ H^i(S,\M)=H^i(X,\pi_*\M)\]} \medskip

This theorem may be used to prove cohomological results on schemes by proving them on a finite ringed space. For example, one can prove the Theorem of formal functions, Serre's criterion of affineness (see \cite{Sancho2}), flat base change  or Grothendieck's duality in the context of finite ringed spaces (where the proofs are easier)  obtaining those results for schemes as a  particular case. Thus, the  standard hypothesis of separated or semi-separated on schemes may be replaced by the less restrictive hypothesis of quasi-separated. This will be done in  future papers.

Theorems 1 and 2 led us to conclude that it is worthy to  make a  study of ringed finite spaces and of quasi-coherent modules on them. In \cite{Sancho}  ringed finite spaces were studied from the homotopical point of view. Here we make a cohomological study of ringed finite spaces and quasi-coherent sheaves. While in \cite{Sancho} the topological case case was the guide to follow, here is the algebro-geometric case (i.e., schemes). As a very brief resume, we make a study of those ringed finite spaces and morphisms between them that have a good behavior with respect to quasi-coherence. To be more  precise, let us introduce some definitions and results.

By a ringed finite space we mean  a ringed space $(X,\OO_X)$ whose underlying topological space $X$ is finite, i.e. it is a finite topological space endowed with a sheaf of (commutative with unit) rings. It is well known (since Alexandroff) that a finite topological space is equivalent to a finite preordered set, i.e. giving a topology on a finite set is equivalent to giving a preorder relation. Giving a sheaf of rings $\OO_X$ on a finite topological space is equivalent to give, for each point $p\in X$, a ring $\OO_p$, and for each $p\leq q$ a morphism of rings $r_{pq}\colon \OO_p\to\OO_q$, satisfying the obvious relations ($r_{pp}=\Id$ for any $p$ and $r_{ql}\circ r_{pq}=r_{pl}$ for any $p\leq q\leq l$). The category of ringed finite spaces   is a full  subcategory of the category of ringed spaces and it contains (fully faithfully) the category of finite topological spaces (that we shall refer to as ``the topological case'') and the category of affine schemes (see Examples \ref{ejemplos}, (1) and (2)). If $(S,\OO_S)$ is an arbitrary ringed space (a topological space, a differentiable manifold, a scheme, etc) and we take a finite covering $\U=\{ U_1,\dots,U_n\}$ by open subsets, there is a natural associated ringed finite space $(X,\OO_X)$ and a morphism of ringed spaces $S\to X$ (see Examples \ref{ejemplos}, (3));   we say that $X$ is a finite model of $S$.

The first problem one encounters  is that the category of quasi-coherent sheaves on a ringed finite space may be not abelian, because the kernel of a morphism between quasi-coherent modules may fail to be quasi-coherent. Ringed finite spaces where this problem disappears are called finite spaces. More precisely, by a {\it finite space} we mean a ringed finite space $(X,\OO_X)$ such that the morphisms $r_{pq}\colon \OO_p\to\OO_q$ are flat.  Under this flatness assumption, the category of quasi-coherent modules on $X$ is an abelian subcategory of the abelian category of all $\OO_X$-modules. If $\OO_X$ is a sheaf of noetherian rings (i.e., $\OO_p$ is noetherian for any $p\in X$), then this flatness condition is equivalent to say that the structure sheaf $\OO_X$ is coherent. From the point of view of integral functors, the flatness assumption allows to define integral functors between the derived categories of quasi-coherent sheaves (see Corollary \ref{integral-functors}). That is, the category of finite spaces is the most general framework where the theory of integral functors for quasi-coherent sheaves may be developed. Finally, our main examples (i.e. the ``topological case'' and finite models of schemes) satisfy this flatness condition.

Section \ref{Sect-FinSp} is devoted to the study of the main cohomological properties of quasi-coherent sheaves on  finite spaces. The main results are Theorem \ref{qc-of-proj}, that studies the behaviour of quasi-coherence under projections, and Theorem  \ref{graph}, that studies the cohomological structure of the graph of a morphism; these results are essential for the rest of the paper.

However, finite spaces still have a lot of pathologies. Regarding quasi-coherent sheaves, the main problem is that if $f\colon X\to Y$ is a morphism between finite spaces, then $f_*$ does not preserve quasi-coherence in general, even in the most elementary cases as the inclusion of an open subset in a finite space. The second pathology is that the category of finite spaces does not have fibred products, i.e., the flatness assumption does not survive under fibred products.

Sections \ref{Section-Schematic} and \ref{Section-SchematicMorphisms} are devoted to the study of those finite spaces and morphisms which have a good behavior with respect to quasi-coherent sheaves. They are called {\it schematic} finite spaces and {\it schematic morphisms}, because any finite model of a scheme is a schematic space and a finite model of a morphism of schemes is a schematic morphism. For the precise definition, let $(X,\OO_X)$ be a finite space and $\delta\colon X\to X\times X$ the diagonal morphism.  We say that $X$ is {\it schematic} if $\RR\delta_*\OO_X$ is quasi-coherent (i.e. the higher direct images $R^i\delta_*\OO_X$ are quasi-coherent for any $i\geq 0$). The schematic condition is equivalent to the following property: for any open subset $j\colon U\hookrightarrow X$, and any quasi-coherent module $\Nc$ on $U$,  $\RR j_*\Nc$ is quasi-coherent  (Theorem \ref{extension}). In particular, quasi-coherent modules on a schematic space have the extension property  (as it happens with schemes). A more restrictive notion is that of a semi-separated finite space (which is the analog of a semi-separated scheme). They are defined as those schematic spaces such that $R^i\delta_*\OO_X=0$ for $i>0$. We prove that this is equivalent to say that the diagonal morphism $\delta$ is ``affine''. As it happens with schemes, being schematic is a local question  (but being semi-separated is not) and every schematic ``affine'' space is semi-separated (we shall say a few words about affineness at the end of this introduction).

Section \ref{Section-SchematicMorphisms} is devoted to schematic morphisms: Let $f\colon X\to Y$ be a morphism between finite spaces and $\Gamma\colon X\to X\times Y$ its graph. We say that $f$ is schematic if $\RR\Gamma_*\OO_X$ is quasi-coherent. We prove that, if $f$ is schematic, then $\RR f_*\M$ is quasi-coherent for any quasi-coherent module $\M$ on $X$, and the converse is also true if $X$ is schematic (Theorem \ref{sch-preserv-qc2}). The local structure of schematic spaces and morphisms, their behavior  under direct products or compositions,  their structure and properties in the affine case, Stein's factorization and other questions are also treated in sections \ref{Section-Schematic} and \ref{Section-SchematicMorphisms}.  Let $\C_{Schematic}$ be the category of schematic finite spaces and schematic morphisms. This category has the following properties:

- If $f\colon X\to Y$ is a morphism in $\C_{Schematic}$, then $\RR f_*$  preserves quasi-coherence.

- If $X$ is an object of $\C_{Schematic}$, and $U$ is an open subset of $X$, then $U$ is also an object of $\C_{Schematic}$ and the inclusion morphism $j\colon U\hookrightarrow X$ is a morphism in $\C_{Schematic}$.

- $\C_{Schematic}$ is closed under products and graphs; that is, if $X$ and $Y$ belong to $\C_{Schematic}$, then $X\times Y$ belongs to $\C_{Schematic}$, and if $f\colon X\to Y$ is a morphism in $\C_{Schematic}$, then the graph $\Gamma\colon X\to X\times Y$ is a morphism in $\C$.

At the end of section \ref{Section-SchematicMorphisms} we prove that $\C_{Schematic}$ is the biggest subcategory of the category of finite spaces satisfying these conditions (Theorems \ref{characterization-SchematicCategory1} and \ref{characterization-SchematicCategory2}). Finally, we see an important fact: the category of schematic spaces and schematic morphisms has fibered products (Theorem \ref{fiberedproduct}).

The last section (Section \ref{Schematic-Schemes}) deals with the problem of comparing, in categorical terms, schematic finite spaces and schemes. We show that there is a natural functor
\[ \Spec\colon \{ \text{Schematic Finite Spaces}\}\to \{ \text{Ringed Spaces}\}\]
that extends the natural functor associating the affine scheme $\Spec A$ to a ring $A$. We denote by $\C_{Schematic}^{Open}$ the full subcategory of $\C_{Schematic}$ whose objects are those schematic spaces $(X,\OO)$ such that the restriction morphisms $\OO_p\to\OO_q$ are open (any finite model of a scheme has this property). We show that the functor $\Spec$ induces a functor
\[\Spec\colon \C_{Schematic}^{Open}\to \C_{qcqs-Schemes}\]
where $\C_{qcqs-Schemes}$ denotes the category of quasi-compact and quasi-separated schemes. This functor is essentially surjective, but it is not fully faithful. The problem is that the finite models associated to different coverings of a scheme are not isomorphic. To avoid this problem, i.e., to obtain  a fully faithful functor, one needs to localize the category $\C_{Schematic}^{Open}$ by a certain class of morphisms that we have called {\it weak equivalences}.  A weak equivalence is an schematic and affine morphism $f\colon X\to Y$ such that $f_*\OO_X=\OO_Y$. The name is due to the fact that, in the topological case, these morphisms are weak homotopy equivalences (in the topological ordinary sense). The main result is Theorem \ref{embedding-schemes}, that states that there is a fully faithful and essentially surjective functor from the localization of $\C_{Schematic}^{Open}$ by weak equivalences to the category of quasi-compact and quasi-separated schemes. The proof makes use of Grothendieck's faithfully flat descent (\cite{Grothendieck}). We also show the the category of affine schematic spaces, localized by weak equivalences, is equivalent to the category of affine schemes.  The moral is that there are more schematic spaces (after localization) than schemes; while schemes are those ringed spaces obtained by gluing affine schemes along open subschemes, schematic spaces are finite models of a more general notion: ringed spaces obtained by gluing affine schemes along flat monomorphisms. For example, gluing two copies of an affine line along the generic point is not a scheme, but it is obtained from a schematic finite space, via the functor  $\Spec$.

Finally, let us say a few words about affineness. In subsection \ref{Subsect-AffineFinSp} we introduce the notion of an {\it affine} finite space, which is inspired in the algebro-geometric case, i.e., in the characterization of an affine scheme by its quasi-coherent modules. We say that a  finite space $(X,\OO_X)$ is affine if it is acyclic ($H^i(X,\OO_X)=0$ for any $i>0$) and taking global sections gives an equivalence between the category of quasi-coherent $\OO_X$-modules and the category of $A$-modules (with $A=\OO_X(X)$). In the topological case (i.e., $\OO_X=\ZZ$) being affine is equivalent to being homotopically trival. If $(X,\OO_X)$ is a finite model of a scheme $S$, then $(X,\OO_X)$ is affine if and only if $S$ is an affine scheme. Every finite space is locally affine. The main result is Theorem \ref{AffineFinSp}, that gives a cohomological characterization of affine finite spaces. In section \ref{Section-SchematicMorphisms} we study affine schematic spaces, which play the role in the category of schematic spaces that affine schemes do in the category of schemes. Affine schematic morphisms are also treated. A deeper study of affine schematic spaces is done in \cite{Sancho2}.
 
Many of the results and techniques of this paper are  generalizable to Alexandroff spaces (those topological spaces where each point has a minimal open subset containing it), or finite quivers. Instead of dealing with the greatest possible generality, we have preferred to restrict ourselves to finite spaces, as a guiding and fruitful model for other more general situations.

A summary of these results was presented at the talk ``Quasi-coherent modules on finite spaces" in  Warwick EPSRC Symposium: Fourier-Mukai, 34 years on. 

This paper is dedicated to the beloved memory of Prof. Juan Bautista Sancho Guimer{\'a}. I learned from him most of mathematics I know, in particular the use of finite topological spaces in algebraic geometry.

\section{Preliminaries}

In this section we recall elementary facts about finite topological spaces and ringed spaces. The reader may consult \cite{Barmak} for the results on finite topological spaces and \cite{GrothendieckDieudonne} for  ringed spaces.

\subsection{Finite topological spaces}
\medskip

\begin{defn} A finite topological space  is a topological space with a finite number of points.
\end{defn}

Let $X$ be a finite topological space. For each $p\in X$, we shall denote by  $U_p$  the minimum open subset containing  $p$, i.e., the intersection of all the open subsets containing $p$. These $U_p$ form  a minimal base of open subsets.

\begin{defn} A finite preordered set is a finite set with a reflexive and transitive relation (denoted by $\leq$).
\end{defn}

\begin{thm} There is an equivalence between finite topological spaces and finite preordered sets.

\end{thm}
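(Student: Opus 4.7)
The plan is to exhibit explicit functors in both directions between the two categories and verify they are mutually inverse, including the statement on morphisms.

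First I would define a functor from finite topological spaces to finite preordered sets. Given a finite topological space $X$, set $p\leq q$ iff $q\in U_p$, where $U_p$ is the minimum open subset containing $p$ (as introduced just above the statement). Reflexivity is immediate since $p\in U_p$. For transitivity, if $p\leq q$ and $q\leq r$ then $q\in U_p$, and since $U_p$ is an open set containing $q$, minimality of $U_q$ forces $U_q\subseteq U_p$; then $r\in U_q\subseteq U_p$, so $p\leq r$. For a continuous map $f\colon X\to Y$, if $p\leq q$ then $f^{-1}(U_{f(p)})$ is an open containing $p$, hence contains $U_p$, and in particular $q$; so $f(q)\in U_{f(p)}$, i.e.\ $f(p)\leq f(q)$. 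Thus $f$ is order preserving.

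Next I would define the reverse functor. Given a finite preordered set $(X,\leq)$, declare a subset $U\subseteq X$ to be open iff it is an up-set: $p\in U$ and $p\leq q$ imply $q\in U$. Reflexivity and transitivity yield immediately that arbitrary unions and intersections of up-sets are up-sets (together with $\emptyset$ and $X$), so this defines a topology, and the minimum open containing $p$ is $\{q:p\leq q\}$. If $g\colon (X,\leq)\to (Y,\leq)$ is order preserving and $V\subseteq Y$ is an up-set, then $g^{-1}(V)$ is an up-set, hence $g$ is continuous.

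Finally I would check the two constructions are mutually inverse. Starting from a preorder, the associated topology has $U_p=\{q:p\leq q\}$, and the induced preorder is $p\leq' q$ iff $q\in U_p$ iff $p\leq q$; so we recover the original preorder. Starting from a finite topological space $X$, the induced preorder gives a topology whose opens are the up-sets; every open $U$ of $X$ satisfies $U=\bigcup_{p\in U}U_p$, and each $U_p$ is an up-set for the induced preorder, so $U$ is an up-set; conversely any up-set $U$ satisfies $U_p\subseteq U$ for all $p\in U$ (since $U_p=\{q:p\leq q\}\subseteq U$), so $U=\bigcup_{p\in U}U_p$ is open. The two topologies coincide, completing the object-level inverse correspondence, and the morphism-level correspondences are inverse to each other by construction.

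The argument is straightforward; the only subtle point is keeping conventions consistent. Specifically, the convention announced implicitly in the introduction (via $[s]\leq[s']$ iff $U^s\supseteq U^{s'}$) forces the choice $p\leq q\Leftrightarrow q\in U_p$ and forces ``open = up-set'', and one must be careful that this is the convention used throughout. No deeper obstacle is anticipated, since finiteness guarantees the existence of the minimal neighborhoods $U_p$ and turns the correspondence into an exact equivalence rather than merely a relation to Alexandroff-type topologies.
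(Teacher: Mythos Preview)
Your proposal is correct and follows essentially the same approach as the paper: both define $p\leq q$ by $q\in U_p$ (equivalently $p\in\bar q$), and the topology you put on a preorder via up-sets is exactly the one the paper specifies by declaring $\bar p=\{q:q\leq p\}$. Your write-up is in fact more complete than the paper's, which only records the two constructions and leaves the verification that they are mutually inverse (and the continuous\,=\,monotone statement) to the reader.
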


\begin{proof} If $X$ is a finite topological space, we define the relation: $$p\leq q\quad\text{iff}\quad p\in \bar q \quad (\text{i.e., if } q\in U_p) $$
Conversely, if $X$ is a finite preordered set, we define the following topology on  $X$: the closure of a point $p$ is $\bar p=\{ q\in X: q\leq p\}$.
\end{proof}

\begin{rem} \begin{enumerate}
\item The preorder relation defined above does not coincide with that of \cite{Barmak}, by with its inverse. In other words, the topology associated to a preorder that we have defined above is the dual topology that the one considered in op.cit.
\item If $X$ is a finite topological space, then $U_p=\{ q\in X: p\leq q\}$. Hence $X$ has a minimum $p$ if and only if $X=U_p$.
\end{enumerate}
\end{rem}

A map  $f\colon X\to X'$ between finite topological spaces is continuous if and only if it is monotone: for any $p\leq q$, $f(p)\leq f(q)$.

\begin{prop} A finite topological space is  $T_0$ (i.e., different points have different closures) if and only if the relation $\leq$ is antisymmetric, i.e., $X$ is a partially ordered finite set (a finite poset).
\end{prop}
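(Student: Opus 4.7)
The plan is to rewrite both conditions in a common form and observe that they coincide. The key bridge is the elementary observation that, in any topological space, $p \in \bar q$ is equivalent to $\bar p \subseteq \bar q$: indeed, $\bar p$ is the smallest closed set containing $p$, so any closed set containing $p$ must contain $\bar p$, and conversely if $\bar p \subseteq \bar q$ then $p \in \bar p \subseteq \bar q$. Applied to the preorder defined in the previous theorem, this says that $p \leq q$ if and only if $\bar p \subseteq \bar q$.

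From this I would deduce immediately that $p \leq q$ and $q \leq p$ together hold if and only if $\bar p = \bar q$. Hence the antisymmetry property, namely ``$p \leq q$ and $q \leq p$ imply $p = q$'', is literally the statement ``$\bar p = \bar q$ implies $p = q$'', which is the $T_0$ axiom. So the equivalence is not just an implication in each direction but a tautological reformulation, and the proof is essentially a one-line unwinding once the lemma $p \leq q \iff \bar p \subseteq \bar q$ is in place.

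The only thing to be slightly careful about is the orientation of the order: the excerpt's Remark flags that the convention here is opposite to that of Barmak, so one must use consistently the definition $p \leq q \iff p \in \bar q$ (equivalently $q \in U_p$) given in the proof of the preceding theorem; otherwise one risks proving $T_0 \iff$ antisymmetry for the wrong relation. Beyond that bookkeeping there is no real obstacle; finiteness is not even used, so I would present the argument as a direct chain of equivalences between ``$p=q$'', ``$\bar p = \bar q$'', and ``$p \leq q \leq p$'', and conclude.
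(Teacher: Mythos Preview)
Your argument is correct and complete: the chain $p\leq q \iff p\in\bar q \iff \bar p\subseteq\bar q$ immediately gives that antisymmetry of $\leq$ and the $T_0$ property are the same condition. The paper itself states this proposition without proof, treating it as an elementary and well-known fact, so there is no approach to compare against; your write-up would serve perfectly well as the missing proof.
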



\medskip
\noindent{\it\ \ Dimension}. The dimension of a finite topological space is the maximum of the lengths of the chains of irreducible closed subsets. Equivalently, it is the maximum of the lengths of the  chains of points  $x_0<x_1<\cdots <x_n$.

\medskip
\begin{ejem}\label{covering}{\bf (Finite topological space associated to a finite covering)} (see \cite{Sancho}, Example 1.6). Let $S$ be a topological space and let   $\U=\{U_1,\dots,U_n\}$ be a finite open covering of $S$. Let us consider the following equivalence relation on $S$: we say that $s\sim s'$ if $\U$ does not distinguish $s$ and $s'$, i.e., if we denote $U^s=\underset{U_i\ni s}\cap U_i$, then $s\sim s'$ iff $U^s=U^{s'}$.  Let $X=S/\negmedspace\sim$ be the quotient set with the topology given by the following partial order: $[s]\leq [s']$ iff $U^s\supseteq U^{s'}$.  This is a finite $T_0$-topological space, and the quotient map $\pi\colon S\to X$, $s\mapsto [s]$, is continuous. Indeed, for each $[s]\in X$, one has that $\pi^{-1}(U_{[s]})=U^s$. We shall say that $X$ is the  finite topological space associated to the topological space $S$ and the finite covering $\U$.

This construction is functorial in $(S,\U)$: Let $f\colon S'\to S$ be a continuous map, $\U$ a finite covering of $S$ and $\U'$ a finite covering of $S'$ that is thinner than $f^{-1}(\U)$ (i.e., for each $s'\in S'$, $U^{s'}\subseteq f^{-1}(U^{f(s')})$). If $\pi\colon S\to X$ and $\pi'\colon S'\to X'$ are the associated finite spaces, one has a continuous map $X'\to X$ and a commutative diagram
\[\xymatrix{ S'\ar[r]^f\ar[d]_{\pi'} & S\ar[d]^\pi\\ X'\ar[r] & X.
}\]


A more intrinsic construction of the finite topological space associated to a finite covering is given by spectral methods (see \cite{TeresaSancho}); indeed, let $\T_\U$ be the topology on $S$ generated by $\U$ and $\T_S$ the given topology of $S$. Then $X=\Spec\T_\U$ and the morphism $S\to X$ corresponds to the inclusion $\T_\U\hookrightarrow \T_S$. In general, for any distributive lattice $B$ and any topological space $S$ there is a bijective correspondence between morphisms (of distributive lattices) $B\to\T_S$ and continuous maps $S\to\Spec B$. In fact, given a continuous map $f\colon S\to\Spec B$, it induces a morphism $f^{-1}\colon \T_{\Spec B}\to \T_S$ whose composition with the natural inclusion $B\hookrightarrow \T_{\Spec B}$, gives the correspondent morphism $B\to\T_S$. For any finite topological space $X$ one has a canonical homeomorphism $\Spec\T_X$ is the $T_0$-fication of $X$. These spectral methods are only used in the proof of Theorem \ref{embedding-schemes}.

\end{ejem}

\subsection{Generalities on ringed spaces}
\medskip

\begin{defn} A ringed space is a pair $(X,\OO)$, where $X$ is a topological space and  $\OO$ is a sheaf of (commutative with unit) rings on  $X$. A morphism or ringed spaces $(X,\OO)\to (X',\OO')$ is a pair $(f,f^\#)$, where $f\colon X\to X'$ is a continuous map and $f^\#\colon \OO'\to f_*\OO'$ is a morphism of sheaves of rings (equivalently, a morphism of sheaves of rings $f^{-1}\OO'\to \OO$).
\end{defn}

\begin{defn} Let $\M$ be an $\OO$-module (a sheaf of $\OO$-modules). We say that $\M$ is {\it quasi-coherent} if for each $x\in X$ there exist an open neighborhood  $U$ of $x$ and an exact sequence
\[ \OO_{\vert U}^I \to \OO_{\vert U}^J\to\M_{\vert U}\to 0\] with $I,J$ arbitrary sets of indexes. Briefly speaking, $\M$ is quasi-coherent if it is locally a cokernel of free modules. We say that $\M$ is an $\OO$-module {\it of finite type} if, for each  $x\in X$, there exist an open neighborhood $U$ and an epimorphism
\[ \OO_{\vert U}^n\to\M_{\vert U}\to 0,\] i.e., $\M$ is locally a quotient of a finite free module. We say that $\M$ is an  $\OO$-module {\it of finite presentation} if, for each point $x\in X$, there exist an open neighborhood  $U$ and an exact sequence
\[ \OO_{\vert U}^m\to \OO_{\vert U}^n\to\M_{\vert U}\to 0.\] That is, $\M$ is locally a cokernel of finite free modules. Finally, we say that    $\M$ is {\it coherent} if it is of finite type and for any open subset $U$ and any morphism $\OO_{\vert U}^n\to\M_{\vert U}$, the kernel is an $\OO_{\vert U}$-module of finite type. In other words, $\M$ is coherent if it is of finite type and for every open subset $U$  any submodule of finite type of   $\M_{\vert U}$ is of finite presentation.
\end{defn}


Let $f\colon X\to Y$ a morphism of ringed spaces. If $\M$ is a quasi-coherent (resp. of finite type) module  on $Y$, then $f^*\M$ is a quasi-coherent (resp. of finite type) module on $X$.

Let $f\colon \M\to\Nc$ be a morphism of $\OO$-modules. If $\M$ and $\Nc$ are quasi-coherent, the cokernel $\Coker f$ is quasi-coherent too, but the kernel may fail to be quasi-coherent.

Direct sums and direct limits of quasi-coherent modules are quasi-coherent. The tensor product of two quasi-coherent modules is also quasi-coherent.

\section{Ringed finite spaces}

Let $X$ be a finite topological space. Recall that we have  a preorder relation \[ p\leq q \Leftrightarrow p\in \bar q \Leftrightarrow U_q\subseteq U_p\]

 Giving a sheaf $F$ of abelian groups (resp. rings, etc) on $X$ is equivalent to giving the following data:

 - An abelian group  (resp. a ring, etc) $F_p$ for each $p\in X$.

 - A morphism of groups (resp. rings, etc) $r_{pq}\colon F_p\to F_q$ for each $p\leq q$, satisfying: $r_{pp}=\Id$ for any $p$, and $r_{qr}\circ r_{pq}=r_{pr}$ for any $p\leq q\leq r$. These $r_{pq}$ are called {\it restriction morphisms}.

Indeed, if $F$ is a sheaf on $X$, then  $F_p$ is the stalk of $F$ at $p$, and it coincides with the sections of $F$ on $U_p$. That is
\[ F_p=\text{ stalk of } F \text{ at } p = \text{ sections of } F \text{ on } U_p:=F(U_p)\]
The morphisms $F_p\to F_q$ are just the restriction morphisms  $F(U_p)\to F(U_q)$.

\begin{ejem} Given a group $G$, the constant sheaf $G$ on $X$ is given by the data: $G_p=G$ for any $p\in X$, and $r_{pq}=\Id$ for any $p\leq q$.
\end{ejem}

\begin{defn} A {\it ringed finite space} is a ringed space $(X,\OO )$ such that  $X$ is a finite topological space.
\end{defn}

By the previous consideration, one has a ring $\OO_p$ for each $p\in X$, and a morphism of rings $r_{pq}\colon \OO_p\to\OO_q$ for each $p\leq q$, such that $r_{pp}=\Id$ for any $p\in X$ and  $r_{ql}\circ r_{pq}=r_{pl}$ for any $p\leq q\leq l$.
\medskip

 Giving a morphism of ringed spaces $(X,\OO)\to (X',\OO')$ between two ringed finite spaces, is equivalent to giving:

-  a continuous (i.e. monotone) map $f\colon X\to X'$,

 -  for each  $p\in X$, a ring homomorphism  $f^\#_p\colon \OO'_{f(p)}\to \OO_p$, such that, for any  $p\leq q$, the diagram (denote $p' =f(p), q'=f(q)$)
\[ \xymatrix{ \OO'_{p'} \ar[r]^{f^\#_{p}} \ar[d]_{r_{p'q'}} & \OO_{p}\ar[d]^{r_{pq}}\\ \OO'_{q'} \ar[r]^{f^\#_{q}}   & \OO_{q}}\] is commutative. We shall denote by $\Hom(X,Y)$ the set of morphisms of ringed spaces between two ringed spaces $X$ and $Y$.

\begin{ejems}\label{ejemplos} \item[$\,\,$(1)] {\it Punctual ringed spaces}. A ringed finite space is called punctual if the underlying topological space has only one element.  The sheaf of rings is then just a ring. We shall denote by  $(*,A)$ the ringed finite space with topological space $\{*\}$ and ring $A$. Giving a morphism of ringed spaces  $(X,\OO)\to (*,A)$ is equivalent to giving a ring homomorphism $A\to \OO(X)$. In particular, the category of punctual ringed spaces is equivalent to the (dual) category of rings, i.e., the category of affine schemes. In other words, the category of affine schemes is a full subcategory of the category of ringed finite spaces, precisely the full subcategory of punctual ringed finite spaces.

Any ringed space $(X,\OO)$ has an associated punctual ringed space $(*,\OO(X))$ and a morphism or ringed spaces $\pi\colon (X,\OO)\to (*,\OO(X))$ which is universal for morphisms from $(X,\OO)$ to punctual spaces. In other words, the inclusion functor
\[i\colon \{\text{Punctual ringed spaces}\} \hookrightarrow \{\text{Ringed spaces}\}\] has a left adjoint: $(X,\OO)\mapsto (*,\OO(X))$. For any $\OO(X)$-module $M$, $\pi^*M$ is a quasi-coherent module on $X$. We sometimes denote $\widetilde M:=\pi^*M$. If $X\to Y$ is a morphism of ringed spaces and $A\to B$ is the induced morphism between the global sections of $\OO_Y$ and $\OO_X$, then, for any $A$-module $M$ one has that $f^*\widetilde M = \widetilde{M\otimes_AB}$.
\medskip

\item[$\,\,$(2)] {\it Finite topological spaces}. Any finite topological space $X$ may be considered as a ringed finite space, taking
the constant sheaf $\ZZ$ as the sheaf of rings. If  $X$ and $Y$ are two finite topological spaces, then giving a morphism of ringed spaces  $(X,\ZZ)\to (Y,\ZZ)$ is just giving a continuous map $X\to Y$. Therefore the category of finite topological spaces is a full subcategory of the category of ringed finite spaces. The (fully faithful) inclusion functor
\[ \aligned \{\text{Finite topological spaces}\} &\hookrightarrow \{\text{Ringed finite spaces} \}\\ X &\mapsto (X,\ZZ)\endaligned\] has a left adjoint, that maps a ringed finite space $(X,\OO)$ to $X$. Of course, this can be done more generally, removing the finiteness hypothesis: the category of topological spaces is a full subcategory of the category of ringed spaces (sending $X$ to $(X,\ZZ)$), and this inclusion has a left adjoint: $(X,\OO)\mapsto X$.
\medskip

\item[$\,\,$(3)] Let $(S,\OO_S)$ be a ringed space (a scheme, a differentiable manifold, an analytic space, ...).
Let $\U=\{U_1,\dots,U_n\}$ be a finite open covering of $S$. Let $X$ be the finite topological space associated to $S$ and $\U$, and $\pi\colon S\to X$ the natural continuous map  (Example \ref{covering}). We have then a sheaf of rings on $X$, namely  $\OO:=\pi_*\OO_S$, so that $\pi\colon (S,\OO_S)\to (X,\OO)$ is a morphism of ringed spaces. We shall say that $(X,\OO)$ is the  {\it ringed finite space associated to the ringed space $S$ and the finite covering $\U$}. This construction is functorial on $(S,\U)$ and on $S$, as in Example \ref{covering}.

\medskip
\item[$\,\,$(4)] {\it Quasi-compact and quasi-separated schemes}. Let $(S,\OO_S)$ be a scheme and $\U=\{U_1,\dots,U_n\}$  a finite open covering of $S$. We say that $\U$ is {\it locally affine} if for each $s\in S$, the intersection $U_s = \underset{s\in U_i}\cap U_i$ is affine. We have the following:

\begin{prop} Let $(S,\OO_S)$ be a scheme. The following conditions are equivalent:
\enumera
\item $S$ is quasi-compact and quasi-separated.
\item $S$ admits a locally affine finite covering $\U$.
\item There exist a finite topological space $X$ and a continuous map $\pi\colon S\to X$ such that $\pi^{-1}(U_x)$ is affine for any $x\in X$.
\eenumera
\end{prop}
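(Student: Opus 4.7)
My plan is to prove the equivalences in the order $(2)\Leftrightarrow(3)$, $(2)\Rightarrow(1)$, and finally $(1)\Rightarrow(2)$, which is the only substantial step.

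For $(2)\Leftrightarrow(3)$, I would use the dictionary of Example \ref{covering}: given a locally affine finite covering $\U$, the associated finite space $X=S/\negmedspace\sim$ and map $\pi\colon S\to X$ satisfy $\pi^{-1}(U_{[s]})=U^s$ by construction, which is affine by hypothesis. Conversely, given $\pi\colon S\to X$ with each $\pi^{-1}(U_x)$ affine, I would take $\U=\{\pi^{-1}(U_x)\}_{x\in X}$; using the identity $\bigcap_{x\leq y}U_x=U_y$ valid in any finite poset, one checks $U^s=\pi^{-1}(U_{\pi(s)})$, hence affine.

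For $(2)\Rightarrow(1)$: the finite collection $\{U^s:s\in S\}$ is an affine open cover of $S$ (indexed by the associated finite space), so $S$ is quasi-compact. For quasi-separatedness it suffices that pairwise intersections of members of an affine cover be quasi-compact, and indeed $U^s\cap U^{s'}=\bigcup_{t\in U^s\cap U^{s'}}U^t$ is a finite union of affines.

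The core of the argument is $(1)\Rightarrow(2)$. Using quasi-compactness, choose a finite affine open cover $V_1,\dots,V_n$ of $S$. For every nonempty $I\subseteq\{1,\dots,n\}$, the intersection $Y_I:=\bigcap_{i\in I}V_i$ is quasi-compact by quasi-separatedness (induction on $|I|$, using that the intersection of two quasi-compact opens in a quasi-separated scheme is quasi-compact), so it admits a finite affine open cover $\{W_{I,\alpha}\}_\alpha$; I take $\U$ to be the union of all these $W_{I,\alpha}$'s. To verify local affineness, fix $s\in S$, let $I_0=\{i:s\in V_i\}$, and choose $\alpha_0$ with $s\in W_{I_0,\alpha_0}$. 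Since $s\in W_{I,\alpha}\subseteq Y_I$ forces $I\subseteq I_0$, one has $U^s\subseteq W_{I_0,\alpha_0}$. The key observation is that for any $i_0\in I$, the scheme $Y_I$ is open in the affine (hence separated) scheme $V_{i_0}$, so $Y_I$ is itself separated; therefore each $W_{I,\alpha}\cap W_{I_0,\alpha_0}$ is the intersection of two affine opens in a separated scheme, and is affine. Finally, $U^s$ is a finite intersection of affine opens inside the affine (thus separated) scheme $W_{I_0,\alpha_0}$, so $U^s$ is affine.

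The main obstacle I anticipate is this construction of $\U$ in $(1)\Rightarrow(2)$: a naive finite affine cover is rarely locally affine, and an unrestrained iterative refinement risks non-termination. The crucial input is that although $S$ itself is only quasi-separated, each partial intersection $Y_I$ is separated by virtue of being open in an affine scheme, which is what allows a single round of refinement, indexed by the subsets of $\{1,\dots,n\}$, to suffice.
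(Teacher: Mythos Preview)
The paper does not actually prove this proposition: the proof environment in the source is left empty. Your argument is correct and fills the gap.

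Two points deserve to be made explicit in a polished write-up. First, in $(1)\Rightarrow(2)$, the reason $W_{I,\alpha}$ and $W_{I_0,\alpha_0}$ are both affine opens of a common separated scheme is that $I\subseteq I_0$ gives $W_{I_0,\alpha_0}\subseteq Y_{I_0}\subseteq Y_I$, so both sit inside $Y_I$; then one rewrites $U^s=\bigcap_{(I,\alpha)}\bigl(W_{I_0,\alpha_0}\cap W_{I,\alpha}\bigr)$ as a finite intersection of affine opens inside the affine (hence separated) scheme $W_{I_0,\alpha_0}$, which is what makes it affine. Second, in $(2)\Rightarrow(1)$, the criterion you invoke---that an affine open cover whose pairwise intersections are quasi-compact yields quasi-separatedness---is standard but worth citing.
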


\begin{proof} 
%
%

\end{proof}


\end{ejems}

\subsection{Fibered products}

Let $X\to S$ and $Y\to S$ be morphisms between ringed finite spaces. The fibered product $X\times_SY$ is the ringed finite space whose underlying topological space is the ordinary fibered product of topological spaces (in other words it is the fibered product set with the preorder given by $(x,y)\leq (x',y')$ iff $x\leq x'$ and $y\leq y'$) and whose sheaf of rings is: if $(x,y)$ is an element of $X\times_SY$ and $s\in S$ is the image of $x$ and $y$ in $S$, then
\[\OO_{(x,y)}=\OO_x\otimes_{\OO_s}\OO_y\] and the morphisms $\OO_{(x,y)}\to \OO_{(x',y')}$ for each $(x,y)\leq (x',y')$ are the obvious ones. For any $(x,y)\in X\times_SY$, one has that $U_{(x,y)}=U_x\times_{U_s}U_y$, with $s$ the image of $x$ and $y$ in $S$.

One has natural morphisms $\pi_X\colon X\times_SY\to X$ and $\pi_Y\colon X\times_SY\to Y$, such that
\[ \aligned \Hom_S(T,X\times_SY)&\to \Hom_S(T,X)\times \Hom_S(T,Y)\\ f&\mapsto (\pi_X\circ f,\pi_Y\circ f)\endaligned\] is bijective.

When $S$ is a punctual space, $S=\{ *,k\}$, the fibered product will be donoted by $X\times_kY$ or simply by $X\times Y$ when $k$ is understood (or irrelevant).   The underlying topological space is the cartesian product $X\times Y$ and the sheaf of rings is given by $\OO_{(x,y)}=\OO_x\otimes_k\OO_y$.

If $f\colon X\to Y$ is a morphism of ringed finite spaces over $k$, the graph $\Gamma_f\colon X\to X\times_kY$ is the morphism of ringed spaces corresponding to the pair of morphisms $\Id\colon X\to X$ and $f\colon X\to Y$.  Explicitly, it is  given by the continuous map $X\to X\times_kY$, $x\mapsto (x,f(x))$, and by the ring homomorphisms $\OO_x\otimes_k \OO_{f(x)}\to \OO_x$ induced by the identity $\OO_x\to\OO_x$ and by the morphisms $\OO_{f(x)}\to\OO_x$ associated with $f\colon X\to Y$.

More generally, if $X$ and $Y$ are ringed finite spaces over a ring finite space $S$ and $f\colon X\to Y$ is a morphism over $S$, the graph of $f$ is the morphism $\Gamma_f\colon X\to X\times_SY$ corresponding to the pair of morphisms $\Id\colon X\to X$ and $f\colon X\to Y$.

\subsection{Quasi-coherent modules}

 Let $\M$ be a sheaf of  $\OO$-modules on a ringed finite space $(X,\OO)$. Thus, for each $p\in X$, $\M_p$ is an $\OO_p$-module and for each  $p\leq q$ one has a morphism of $\OO_p$-modules $\M_p\to\M_q$, hence a morphism of  $\OO_q$-modules
\[\M_p\otimes_{\OO_p}\OO_q\to\M_q\]

\begin{thm}\label{qc} An $\OO$-module $\M$ is quasi-coherent if and only if for any  $p\leq q$ the morphism
\[\M_p\otimes_{\OO_p}\OO_q\to\M_q\]
is an isomorphism.
\end{thm}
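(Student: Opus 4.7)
The plan is to use the fact that on a finite space the family $\{U_p\}_{p\in X}$ is a base of opens and sections on $U_p$ coincide with stalks at $p$, so quasi-coherence (which is a local property) can be tested on the opens $U_p$ and exactness of a complex of sheaves can be tested on stalks at every $q\in X$.

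For the direct implication, suppose $\M$ is quasi-coherent and fix $p\leq q$. Pick an open neighborhood $V$ of $p$ together with an exact sequence $\OO_{|V}^I\to\OO_{|V}^J\to\M_{|V}\to 0$. Since $U_p\subseteq V$, restrict to $U_p$ and take the stalk at $p$ to obtain a right exact sequence of $\OO_p$-modules
\[
\OO_p^I\xrightarrow{\,\varphi_p\,}\OO_p^J\to\M_p\to 0,
\]
where $\varphi_p$ is given by a matrix $(a_{ij})$ of elements of $\OO_p$. Tensoring with $\OO_q$ is right exact, so we get a right exact sequence $\OO_q^I\to\OO_q^J\to\M_p\otimes_{\OO_p}\OO_q\to 0$ in which the first arrow is multiplication by $(r_{pq}(a_{ij}))$. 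On the other hand, taking the stalk of the original presentation directly at $q$ yields $\OO_q^I\to\OO_q^J\to\M_q\to 0$, and the first arrow is given by the same matrix $(r_{pq}(a_{ij}))$ by naturality of restriction. Comparing cokernels gives the isomorphism $\M_p\otimes_{\OO_p}\OO_q\xrightarrow{\sim}\M_q$.

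For the converse, assume the tensor condition holds. Fix $p\in X$; we show $\M_{|U_p}$ is a cokernel of free modules. Choose a free presentation $\OO_p^I\xrightarrow{\,\varphi\,}\OO_p^J\to\M_p\to 0$ of the $\OO_p$-module $\M_p$. Because $\OO_p=\OO(U_p)$ and $\M_p=\M(U_p)$, the matrix defining $\varphi$ and the chosen generators of $\M_p$ actually globalize to morphisms of sheaves on $U_p$, yielding a complex
\[
\OO_{|U_p}^I\to\OO_{|U_p}^J\to\M_{|U_p}
\]
whose stalk at $p$ is the chosen presentation. For any $q\in U_p$ (i.e.\ $p\leq q$) the stalk at $q$ is obtained from the $p$-stalk by applying $-\otimes_{\OO_p}\OO_q$; since this functor is right exact and $\M_p\otimes_{\OO_p}\OO_q=\M_q$ by hypothesis, we obtain a right exact sequence $\OO_q^I\to\OO_q^J\to\M_q\to 0$. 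Hence the complex is exact at every stalk, so $\M_{|U_p}$ is the cokernel of a morphism of free $\OO_{|U_p}$-modules, and $\M$ is quasi-coherent.

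The only delicate step is the naturality check in the forward direction, namely that the matrix describing $\varphi_p$, after applying the restriction $r_{pq}$, computes both the stalk of the presentation at $q$ and the tensor product $\varphi_p\otimes_{\OO_p}\OO_q$; but this is immediate from the definition of the restriction maps of a sheaf on a finite space and the fact that, on free modules, the map between stalks is just the entrywise restriction of the defining matrix.
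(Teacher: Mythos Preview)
Your argument is correct and is precisely the standard proof of this characterization. The paper itself does not supply a proof but defers to \cite{Sancho}; the argument there proceeds exactly along the lines you give, exploiting that on a finite space every point $p$ has a minimal open neighborhood $U_p$ on which sections and stalks coincide, so a presentation can be chosen on $U_p$ and exactness checked stalkwise.

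One minor remark on presentation: when you speak of ``the matrix $(a_{ij})$'' you should keep in mind that $I$ and $J$ may be infinite, so $\varphi_p\colon \OO_p^I\to\OO_p^J$ is determined by its values on the canonical basis of the direct sum $\OO_p^I$, each of which lies in the direct sum $\OO_p^J$ (i.e.\ has finite support). Your comparison of $\varphi_p\otimes_{\OO_p}\OO_q$ with the stalk at $q$ then goes through verbatim, since both are obtained by applying $r_{pq}$ coordinatewise to these data; this is exactly what you say in your final paragraph, but it is worth being explicit that no finiteness of $I,J$ is needed.
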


\begin{proof} See \cite{Sancho}.
\end{proof}

\begin{ejem} Let $(X,\OO)$ be a ringed finite space, $A=\OO(X)$ and $\pi\colon (X,\OO)\to (*,A)$ the natural morphism. We know that for any $A$-module $M$, $\widetilde M:=\pi^*M$ is a quasi-coherent module on $X$. The explicit stalkwise description of $\widetilde M$ is given by: $(\widetilde M)_x=M\otimes_A\OO_x$.
\end{ejem}

\begin{cor}\label{corqc} Let $X$ be a ringed finite space with a minimum and $A=\Gamma(X,\OO)$. Then the functors
\[\aligned \{\text{Quasi-coherent $\OO$-modules} \} & \overset{\longrightarrow}\leftarrow \{ \text{$A$-modules}\} \\ \M &\to \Gamma(X,\M) \\ \widetilde M &\leftarrow M \endaligned \]
are mutually inverse.
\end{cor}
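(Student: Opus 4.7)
The plan is to use the two previous results: the observation that $X = U_{p_0}$ where $p_0$ is the minimum point (so $\Gamma(X, F) = F_{p_0}$ for any sheaf $F$), together with the characterization of quasi-coherent modules in Theorem~\ref{qc}.

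First I would identify the global sections functor. Since $X$ has a minimum $p_0$, Remark 2.2(2) gives $X = U_{p_0}$, so for any sheaf $F$ of $\OO$-modules one has $\Gamma(X, F) = F(U_{p_0}) = F_{p_0}$. In particular $A = \Gamma(X, \OO) = \OO_{p_0}$, and the natural morphism $\pi\colon (X,\OO)\to(*,A)$ restricts on stalks at $p_0$ to the identity. Consequently, for any $A$-module $M$, the quasi-coherent module $\widetilde M = \pi^*M$ has stalks $(\widetilde M)_p = M\otimes_A\OO_p$, and in particular $(\widetilde M)_{p_0} = M\otimes_A A = M$.

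Next I would verify the two compositions are the identity. For $M$ an $A$-module:
\[
\Gamma(X, \widetilde M) = (\widetilde M)_{p_0} = M\otimes_A A = M,
\]
naturally in $M$. For $\M$ a quasi-coherent $\OO$-module, write $M := \Gamma(X,\M) = \M_{p_0}$. The natural morphism $\widetilde{\M_{p_0}}\to\M$ is, on the stalk at $p\in X$, the map
\[
\M_{p_0}\otimes_{\OO_{p_0}}\OO_p \longrightarrow \M_p.
\]
Since $p_0$ is the minimum, $p_0\leq p$ for every $p\in X$, and by Theorem~\ref{qc} this map is an isomorphism because $\M$ is quasi-coherent. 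Hence the morphism of sheaves $\widetilde{\Gamma(X,\M)} \to \M$ is an isomorphism at every stalk, so it is an isomorphism of $\OO$-modules.

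There is no real obstacle here: once one observes $X = U_{p_0}$ and invokes Theorem~\ref{qc}, both compositions reduce to formal identifications. The only thing to be slightly careful about is the naturality of these identifications in $\M$ and $M$, which follows from the functoriality of $\pi^*$ and $\Gamma$ and the fact that $\pi$ induces the identity on stalks at $p_0$; I would spell this out briefly but would not need any further calculation.
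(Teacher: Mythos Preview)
Your argument is correct and follows the same approach as the paper: observe that $X=U_{p_0}$ so $\Gamma(X,-)$ is the stalk at $p_0$, and then invoke Theorem~\ref{qc} to see that the stalk map $\M_{p_0}\otimes_{\OO_{p_0}}\OO_p\to\M_p$ is an isomorphism. The paper's proof is simply a terser version of yours, stating only that $\M$ is determined by $\M_{p_0}$ rather than writing out both compositions explicitly.
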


\begin{proof} Let $p$ be the minimum of $X$. Then $U_p=X$ and for any sheaf $F$ on $X$, $F_p=\Gamma(X,F)$.
If $\M$ is a quasi-coherent module, then for any $x\in X$, $\M_x=\M_p\otimes_{\OO_p}\OO_x$ (Theorem \ref{qc}). That is, $\M$ is univocally determined by its stalk at $p$, i.e., by its global sections.
\end{proof}

\begin{thm}  $\M$ is an $\OO$-module of finite type if and only if:

 - for each  $p\in X$, $\M_p$ is an $\OO_p$-module of finite type,

 - for any  $p\leq q$ the morphism \[\M_p\otimes_{\OO_p}\OO_q\to\M_q\] is surjective.
\end{thm}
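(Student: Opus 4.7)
The plan is to mimic the proof of Theorem \ref{qc}, exploiting the fact that $\{U_p\}_{p\in X}$ is a base for the topology and that $U_p$ is the minimum open neighborhood of $p$. The key observation is that for any open neighborhood $U$ of $p$ we automatically have $U_p\subseteq U$, so if $\M$ admits a local presentation by a finite free module on $U$, we may restrict to $U_p$ and work there. This will turn the local definition of ``finite type'' into a purely stalkwise condition expressed through the restriction morphisms $r_{pq}$.

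For the forward implication, fix $p\in X$ and suppose there exist an open $U\ni p$ and an epimorphism $\OO^n_{|U}\twoheadrightarrow\M_{|U}$. Restricting to $U_p\subseteq U$ and taking stalks at $p$ yields $\OO_p^n\twoheadrightarrow\M_p$, which shows $\M_p$ is an $\OO_p$-module of finite type. For $q$ with $p\leq q$, taking the stalk at $q$ of the same epimorphism on $U_p$ gives $\OO_q^n\twoheadrightarrow\M_q$; but this map factors as
$$\OO_q^n \;=\; \OO_p^n\otimes_{\OO_p}\OO_q \;\twoheadrightarrow\; \M_p\otimes_{\OO_p}\OO_q \;\longrightarrow\; \M_q,$$
so the second arrow is surjective, giving the second condition.

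For the converse, assume both stalkwise conditions hold and fix $p\in X$. Since $\M_p$ is of finite type over $\OO_p$, choose generators $m_1,\dots,m_n\in\M_p=\M(U_p)$. These sections define a morphism of sheaves $\phi\colon\OO^n_{|U_p}\to\M_{|U_p}$, and it suffices to check surjectivity on stalks at every $q\in U_p$, i.e. for every $q\geq p$. The stalk map $\phi_q\colon\OO_q^n\to\M_q$ sends the standard basis to $r_{pq}(m_1),\dots,r_{pq}(m_n)$, which are the images of the $\OO_q$-generators $m_i\otimes 1$ of $\M_p\otimes_{\OO_p}\OO_q$ under the surjection $\M_p\otimes_{\OO_p}\OO_q\twoheadrightarrow\M_q$ granted by hypothesis. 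Hence $\phi_q$ is surjective for every $q\in U_p$, so $\phi$ is an epimorphism on $U_p$, exhibiting the required local presentation at $p$.

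No step looks genuinely difficult; the entire content sits in the minimality of $U_p$, which converts the nominally local condition of ``finite type'' into the stalkwise translation stated in the theorem. If anywhere one might stumble, it would be in keeping the tensor-product factorisation straight in the forward direction, but this is a formal manipulation of right-exactness of $-\otimes_{\OO_p}\OO_q$ applied to $\OO_p^n\twoheadrightarrow\M_p$.
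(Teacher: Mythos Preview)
Your proof is correct and follows essentially the same approach as the paper's: both directions use the minimality of $U_p$ to reduce the local presentation to a stalkwise statement, with the forward direction factoring the stalk-at-$q$ map through $\M_p\otimes_{\OO_p}\OO_q$ and the converse checking surjectivity of the induced map $\OO^n_{|U_p}\to\M_{|U_p}$ at each stalk $q\geq p$. Your write-up is slightly more explicit about restricting from a given $U$ to $U_p$ and about the factorization via right-exactness of the tensor product, but the argument is the same.
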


\begin{proof} If $\M$ is of finite type, for each  $p$ one has an epimorphism  $\OO_{\vert U_p}^n\to\M_{\vert U_p}\to 0$. Taking, on the one hand,  the stalk at  $p$ tensored by  $\otimes_{\OO_p}\OO_q$, and on the other hand the stalk at $q$, one obtains a commutative diagram
\[\xymatrix {\OO_q^n \ar[r]\ar[d] & \M_p \otimes_{\OO_p}\OO_q \ar[r]\ar[d] & 0 \\ \OO_q^n \ar[r]  & \M_q  \ar[r]  & 0}\]  and one concludes. Conversely, assume that $\M_p$ is of finite type and  $\M_p\otimes_{\OO_p}\OO_q\to\M_q$ is surjective. One has an epimorphism $\OO_p^n\to\M_p\to 0$, that induces a morphism  $\OO_{\vert U_p}^n\to \M_{\vert U_p}$. This is an epimorphism because it is so at the stalk at any  $q\in U_p$.
\end{proof}

\begin{rem} Let $\M$ be an $\OO$-module on a ringed finite space. Arguing as in the latter theorem, one proves that  $ \M_p\otimes_{\OO_p}\OO_q\to\M_q$ is surjective for any $p\leq q$ if and only if $\M$ is locally a quotient of a free module (i.e., for each $p\in X$ there exist an open neighborhood $U$ of $p$ and an epimorphism $\OO_{\vert U}^I\to \M_{\vert U}\to 0$, for some  set of indexes $I$).
\end{rem}

\begin{thm}\label{coherent} An $\OO$-module $\M$ is coherent if and only if:

- for each $p\in X$, $\M_p$ is a coherent $\OO_p$-module.

- for each $p\leq q$, the morphism $\M_p\otimes_{\OO_p}\OO_q\to\M_q$ is surjective.

- for each $p\leq q$ and each sub-$\OO_p$-module of finite type $N$ of $\M_p$, the natural morphism  $N\otimes_{\OO_p}\OO_q\to M_q$ is injective.

\end{thm}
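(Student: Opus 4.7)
The plan is to use the finite-type characterization just established, together with the reformulation of coherence: $\M$ is coherent iff it is of finite type and for every open $U$ and every morphism $\phi\colon\OO_{\vert U}^n\to\M_{\vert U}$, $\ker\phi$ is of finite type. The key device is that any $\OO_p$-module homomorphism $\alpha\colon\OO_p^n\to\M_p$ extends to a unique sheaf morphism $\tilde\alpha\colon\OO_{\vert U_p}^n\to\M_{\vert U_p}$, simply because such a morphism is the datum of $n$ global sections on $U_p$, i.e., $n$ elements of $\M(U_p)=\M_p$.

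For the direct implication, suppose $\M$ is coherent. Then (2) follows from finite-type-ness via the previous theorem. For (1), given $\alpha\colon\OO_p^n\to\M_p$, extend to $\tilde\alpha$ over $U_p$; by coherence $\ker\tilde\alpha$ is of finite type on $U_p$, and its stalk at $p$ is $\ker\alpha$, which is then of finite type over $\OO_p$. For (3), given a finite-type submodule $N\subseteq\M_p$, pick a surjection $\OO_p^n\twoheadrightarrow N$, compose with $N\hookrightarrow\M_p$ to get $\alpha\colon\OO_p^n\to\M_p$, and extend to $\tilde\alpha$. The kernel $K:=\ker\tilde\alpha$ is of finite type, so for any $q\geq p$ the map $K_p\otimes_{\OO_p}\OO_q\to K_q$ is surjective. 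Tensoring the exact sequence $0\to K_p\to\OO_p^n\to N\to 0$ with $\OO_q$ and matching it with $0\to K_q\to\OO_q^n\to\Ima\tilde\alpha_q\to 0$ identifies $N\otimes_{\OO_p}\OO_q$ with $\Ima\tilde\alpha_q\subseteq\M_q$, giving the required injectivity.

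For the converse, assume (1), (2), (3). Conditions (1) and (2) yield $\M$ of finite type via the previous theorem. Given $\phi\colon\OO_{\vert U}^n\to\M_{\vert U}$, I must verify that $\K:=\ker\phi$ is of finite type on $U$. Stalkwise, coherence of $\M_p$ in (1) ensures $\K_p$ is of finite type over $\OO_p$. For the transition: set $N_p:=\Ima\phi_p\subseteq\M_p$, which is of finite type; condition (3) yields an injection $N_p\otimes_{\OO_p}\OO_q\hookrightarrow\M_q$, whose image through $\OO_q^n\to N_p\otimes_{\OO_p}\OO_q\hookrightarrow\M_q$ is precisely $\Ima\phi_q=:N_q$. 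Thus $N_p\otimes_{\OO_p}\OO_q\simeq N_q$, and tensoring $0\to\K_p\to\OO_p^n\to N_p\to 0$ with $\OO_q$ shows that $\K_p\otimes_{\OO_p}\OO_q$ surjects onto the kernel $\K_q$ of $\OO_q^n\to N_q$.

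The main technical point is the identification $N_p\otimes_{\OO_p}\OO_q\simeq N_q$ used in both directions: condition (3) is exactly what secures injectivity of the canonical map $N_p\otimes_{\OO_p}\OO_q\to\M_q$, despite the absence of flatness of $\OO_p\to\OO_q$. Once that identification is in hand, the remaining steps are standard diagram chases based on right-exactness of tensor product and on the sheaf–stalk dictionary on finite spaces.
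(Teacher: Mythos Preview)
Your proof is correct and follows essentially the same approach as the paper's: both directions hinge on extending a stalk map $\OO_p^n\to\M_p$ to a sheaf map on $U_p$, applying the finite-type characterization of the previous theorem to the kernel, and using the diagram chase that converts surjectivity of $K_p\otimes_{\OO_p}\OO_q\to K_q$ into injectivity of $N\otimes_{\OO_p}\OO_q\to\M_q$ (and vice versa). The organization and the key devices are the same as in the paper.
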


\begin{proof} Let $\M$ be a coherent module. By definition, it is of finite type, so $\M_p$ is a finite $\OO_p$-module and $\M_p\otimes_{\OO_p}\OO_q\to\M_q$ is surjective. Let $N$ be a submodule of finite type of  $\M_p$.  $N$ is the image of a morphism  $\OO_p^n\to \M_p$. This defines a morphism  $\OO_{\vert U_p}^n\to\M_{\vert U_p}$, whose kernel  $\K$ is of finite type because  $\M$ is coherent. Taking the the stalk at  $p$ one concludes that $N$ is of finite presentation. Thus, $\M_p$ is a coherent $\OO_p$-module. Moreover one has an exact sequence  $0\to \K_p\to\OO_p^n\to N\to 0$ and for any $q\geq p$ an exact sequence  $  \K_p \otimes_{\OO_p}\OO_q\to\OO_q^n\to N \otimes_{\OO_p}\OO_q\to 0$ and a commutative diagram
\[ \xymatrix{ & \K_p\otimes_{\OO_p}\OO_q \ar[r]\ar[d] & \OO_q^n \ar[r]\ar[d] & N \otimes_{\OO_p}\OO_q \ar[r]\ar[d] & 0
\\ 0\ar[r] &\K_q \ar[r]  & \OO_q^n \ar[r]  & \M_q   &  }  \] The surjectivity of $K_p\otimes_{\OO_p}\OO_q\to \K_q$ implies the injectivity of  $N\otimes_{\OO_p}\OO_q \to\M_q$.

Assume now that $\M$ is a module  satisfying  the conditions. Let $U$ be an open subset and  $\OO_{\vert U}^n\to\M_{\vert U}$ a morphism, whose kernel is denoted by  $\K$. We have to prove that  $\K$ is of finite type. For each $p\in U$, the image, $N$, of $\OO_p^n\to\M_p$ is of finite presentation, because $\M_p$ is coherent; hence,  $\K_p$ is of finite type. For each $q\geq p$ we have a commutative diagram
\[ \xymatrix{ & \K_p\otimes_{\OO_p}\OO_q \ar[r]\ar[d] & \OO_q^n \ar[r]\ar[d] & N \otimes_{\OO_p}\OO_q \ar[r]\ar[d] & 0
\\ 0\ar[r] &\K_q \ar[r]  & \OO_q^n \ar[r]  & \M_q   &  }  \] Now, the injectivity of $N\otimes_{\OO_p}\OO_q \to\M_q$ implies the surjectivity of  $K_p\otimes_{\OO_p}\OO_q\to \K_q$. Hence $\K$ is of finite type and  $\M$ is coherent.
\end{proof}

\begin{thm} $\OO$ is coherent if and only if:
\begin{enumerate} \item For each $p$,   $\OO_p$ is a coherent ring.
\item For any $p\leq q$, the morphism $\OO_p\to\OO_q$ is flat.
\end{enumerate}
\end{thm}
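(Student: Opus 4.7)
The plan is to simply apply Theorem \ref{coherent} to the special case $\M = \OO$ and then reinterpret the three resulting conditions.

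First, condition (1) of Theorem \ref{coherent} applied to $\OO$ asks that $\OO_p$ be a coherent $\OO_p$-module for each $p \in X$; by definition this is precisely the statement that the ring $\OO_p$ is coherent, giving condition (1) of the present theorem. Condition (2) of Theorem \ref{coherent}, namely the surjectivity of $\OO_p \otimes_{\OO_p} \OO_q \to \OO_q$, is automatic because this map is the canonical isomorphism. So everything reduces to condition (3).

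Condition (3) of Theorem \ref{coherent} specialises to: for each $p \leq q$ and each submodule of finite type $N \subseteq \OO_p$, the natural map $N \otimes_{\OO_p} \OO_q \to \OO_q$ is injective. A finitely generated sub-$\OO_p$-module of $\OO_p$ is exactly a finitely generated ideal $I \subseteq \OO_p$, so this reads: for every finitely generated ideal $I \subseteq \OO_p$, the map $I \otimes_{\OO_p} \OO_q \to \OO_q$ is injective. The last step is then to invoke the standard ideal-theoretic criterion of flatness (a module $M$ over a ring $A$ is flat iff $I \otimes_A M \to M$ is injective for every finitely generated ideal $I$) to identify this condition with the flatness of $\OO_p \to \OO_q$, yielding condition (2) of the present theorem.

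No real obstacle is expected: once the translation ``finitely generated submodule of $\OO_p$ = finitely generated ideal of $\OO_p$'' is made, the equivalence of the third condition with flatness is a classical result in commutative algebra. The proof therefore reduces to one short paragraph invoking Theorem \ref{coherent} and the flatness criterion.
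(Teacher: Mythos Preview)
Your proposal is correct and is exactly the paper's approach: the paper's proof is the single sentence ``It is a consequence of the previous theorem and the ideal criterium of flatness,'' which is precisely what you have unpacked. Your explanation that condition~(2) of Theorem~\ref{coherent} is automatic for $\M=\OO$ and that condition~(3) becomes the ideal-theoretic flatness criterion is the intended argument.
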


\begin{proof} It is a consequence of the previous theorem and the ideal criterium of flatness.
\end{proof}

\begin{cor}\label{O-coherent} Let $(X,\OO )$ be a ringed finite space of noetherian rings (i.e,  $\OO_p$ is a noetherian ring for any $p\in X$). Then $\OO$ is coherent if and only if for any $p\leq q$ the morphism $\OO_p\to\OO_q$ is flat.
\end{cor}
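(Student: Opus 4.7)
The plan is to deduce this directly from the preceding theorem, which asserts that $\OO$ is coherent if and only if (1) each $\OO_p$ is a coherent ring and (2) every restriction morphism $r_{pq}\colon \OO_p\to\OO_q$ is flat. So all I need to check is that under the noetherian hypothesis, condition (1) becomes vacuous; then the biconditional collapses to the flatness of the restrictions, which is exactly the claim.

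First I would recall what it means for a ring $A$ to be coherent: $A$ is coherent as an $A$-module, equivalently, every finitely generated ideal of $A$ is finitely presented, equivalently, the kernel of any morphism $A^n\to A$ is finitely generated. Then I would observe that if $A$ is noetherian, any submodule of a finitely generated $A$-module is finitely generated; in particular the kernel of $A^n\to A$ is finitely generated, so $A$ is coherent. Thus noetherian rings are always coherent.

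Applying this pointwise: since each $\OO_p$ is noetherian by hypothesis, each $\OO_p$ is coherent, so condition (1) of the previous theorem holds automatically. Therefore, by that theorem, $\OO$ is coherent if and only if condition (2) holds, namely that $\OO_p\to\OO_q$ is flat for every $p\leq q$. This gives the stated equivalence.

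There is no substantial obstacle here; the only thing worth being careful about is the implication \textit{noetherian $\Rightarrow$ coherent}, which is a standard and elementary fact. The proof is a one-line application of the previous theorem once this is noted.
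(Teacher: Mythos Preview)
Your proposal is correct and is exactly the intended argument: the corollary follows immediately from the preceding theorem once one notes that noetherian rings are coherent, so condition (1) there is automatic under the hypothesis.
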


\subsection{Cohomology}

Let  $X$ be a finite topological space and  $F$ a sheaf of abelian groups on  $X$.

\begin{prop}\label{aciclicity} If $X$ is a finite topological space with a minimum, then $H^i(X,F)=0$ for any sheaf $F$ and any $i>0$. In particular, for any finite topological space one has
\[ H^i(U_p,F)=0\]
for any $p\in X$, any sheaf $F$ and any $i>0$.
\end{prop}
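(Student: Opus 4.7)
The plan is to observe that the assumption that $X$ has a minimum collapses the global sections functor to a stalk functor, which is exact, and then to derive the general statement from the special one by restriction.

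First I would note that if $p$ is a minimum of $X$, then $p \leq q$ for every $q \in X$, so $U_p = \{q \in X : p\leq q\} = X$. Consequently, for any sheaf $F$ of abelian groups on $X$,
\[
\Gamma(X,F) \;=\; F(U_p) \;=\; F_p,
\]
the stalk of $F$ at $p$. In other words, on a finite space with minimum, the global sections functor is naturally isomorphic to the stalk-at-$p$ functor.

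Next, the stalk functor $F \mapsto F_p$ on sheaves of abelian groups is exact (stalks commute with kernels and cokernels, and on a finite topological space $F_p$ is simply $F(U_p)$, so exactness can also be seen directly from the fact that short exact sequences of sheaves are exact on every open set that is the minimal neighborhood of some point). Hence $\Gamma(X,-)$ is an exact functor, and so its right derived functors vanish:
\[
H^i(X,F) \;=\; R^i\Gamma(X,F) \;=\; 0 \qquad \text{for all } i>0.
\]

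For the second assertion, fix a point $p$ in an arbitrary finite topological space $X$ and equip $U_p$ with its induced topology. For every $q \in U_p$, we have $p \leq q$, so $p$ is a minimum of the subspace $U_p$; moreover, the minimal neighborhood of $q$ in $U_p$ coincides with $U_q$ itself (because $U_q \subseteq U_p$). Applying the first part to the finite topological space $U_p$ (with minimum $p$) and to the restricted sheaf $F_{|U_p}$ yields
\[
H^i(U_p,F) \;=\; H^i(U_p,F_{|U_p}) \;=\; 0 \qquad \text{for all } i>0.
\]
There is no real obstacle here; the only point that requires care is identifying $\Gamma$ with a stalk functor, from which everything follows formally.
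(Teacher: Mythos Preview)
Your proof is correct and follows essentially the same approach as the paper: identify $\Gamma(X,-)$ with the stalk functor at the minimum $p$ via $U_p=X$, use exactness of the stalk functor to conclude that the higher derived functors vanish, and then restrict to $U_p$ for the second assertion. The paper's version is more terse but the idea is identical.
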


\begin{proof} Let $p$ be the minimum of $X$. Then $U_p=X$ and, for any sheaf $F$, one has $\Gamma(X,F)=F_p$; thus, taking global sections is the same as taking the stalk at $p$, which is an exact functor.
\end{proof}

Let $f\colon X\to Y$ a continuous map between finite topological spaces and $F$ a sheaf on $X$. The i-th higher direct image $R^if_*F$ is the sheaf on $Y$ given by:
\[ [R^if_*F]_y=H^i(f^{-1}(U_y),F)\]

\begin{rem} Let $X,Y$ be two finite topological spaces and $\pi\colon X\times Y\to Y$ the natural projection. If $X$ has a minimum ($X=U_x$), then, for any sheaf $F$ on $X\times Y$, $R^i\pi_*F=0$ for $i>0$, since $(R^i\pi_*F)_y=H^i(U_x\times U_y,F)=0$ by Proposition \ref{aciclicity}. In particular, $H^i(X\times Y, F)=H^i(Y,\pi_*F)$.
\end{rem}

\medskip
\noindent{\it Standard resolution}. Let $F$ be a sheaf on a finite topological space $X$. We define $C^nF$ as the sheaf on $X$  whose sections on an open subset $U$ are
\[ (C^nF)(U)=\proda{U \ni x_0<\cdots <x_n } F_{x_n}\] and whose  restriction morphisms $(C^nF)(U)\to (C^nF)(V)$ for any $V\subseteq U$ are the natural projections.

One has morphisms $d\colon C^nF \to C^{n+1}F$ defined in each open subset $U$ by the formula
\[ (\di a) (x_0<\cdots < x_{n+1})= \suma{0\leq i\leq n} (-1)^i a(x_0<\cdots \wh{x_i}\cdots <x_{n+1}) + (-1)^{n+1} \bar a (x_0<\cdots <x_n)   \] where $\bar a (x_0<\cdots <x_n)$ denotes the image of  $ a (x_0<\cdots <x_n)$ under the morphism $F_{x_n}\to F_{x_{n+1}}$. There is also a natural morphism   $\di\colon F\to C^0F$. One easily checks that $\di^2=0$.

\begin{thm} $C^\punto F$ is a finite and flasque resolution of  $F$.
\end{thm}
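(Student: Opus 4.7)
My plan is to verify three separate claims: that $C^\punto F$ has only finitely many nonzero terms, that each $C^nF$ is flasque, and that the augmented complex
\[ 0 \to F \overset{\di}{\to} C^0F \overset{\di}{\to} C^1F \overset{\di}{\to} \cdots \]
is exact. The first two follow immediately from the explicit ``product over chains'' description. Any strict chain $x_0 < \cdots < x_n$ in $X$ has $n + 1 \leq |X|$, so $C^nF = 0$ for $n \geq |X|$. For opens $V \subseteq U$, the restriction $(C^nF)(U) \to (C^nF)(V)$ is the projection onto those factors indexed by chains already contained in $V$; it admits an evident section (extend by zero on the remaining factors) and is therefore surjective.

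The content lies in exactness, which I verify stalkwise. Fix $p \in X$; since $U_p$ is the minimal open containing $p$, one has $(C^nF)_p = (C^nF)(U_p) = \prod_{p \leq x_0 < \cdots < x_n} F_{x_n}$. Exploiting the fact that $p$ is the minimum of $U_p$, I exhibit a contracting homotopy of the augmented stalk complex. Set $s_0 \colon (C^0F)_p \to F_p$ by $s_0(a) = a(p)$, and for $n \geq 1$ define $s_n \colon (C^nF)_p \to (C^{n-1}F)_p$ by
\[ (s_n a)(y_0 < \cdots < y_{n-1}) = \begin{cases} a(p < y_0 < \cdots < y_{n-1}) & \text{if } p < y_0, \\ 0 & \text{if } p = y_0. \end{cases} \]
The augmentation $\epsilon \colon F_p \to (C^0F)_p$ is $(\epsilon f)(x_0) = r_{p x_0}(f)$, and one checks trivially that $s_0 \epsilon = \Id_{F_p}$.

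The key step is the homotopy identity $\di\, s + s\, \di = \Id$ in nonnegative degrees, which I would verify by splitting each input chain $y_0 < \cdots < y_n$ according to whether $y_0 = p$ or $y_0 > p$. When $y_0 > p$, the $i = 0$ term of $\di$ applied to the prepended chain $p < y_0 < \cdots < y_n$ produces exactly $a(y_0 < \cdots < y_n)$, and the remaining terms of $s_{n+1}(\di a)$ pair up with those of $\di(s_n a)$ with opposite signs and cancel; in particular, the final restriction term $(-1)^{n+1} \bar a(\cdots)$ appearing in the formula for $\di$ matches the $(-1)^n \overline{s_n a}(\cdots)$ term of $\di(s_n a)$ with opposite sign. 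When $y_0 = p$, the definition of $s$ forces $(s_{n+1}\di a)(y_0 < \cdots < y_n) = 0$, while only the $i = 0$ term of $\di(s_n a)$ survives and again yields $a(y_0 < \cdots < y_n)$. The main obstacle is precisely this sign bookkeeping, which though routine must be carried out carefully on account of the asymmetric definition of $s_n$; once settled, the augmented stalk complex is contractible, hence exact at every $p$, and $C^\punto F$ is the desired finite flasque resolution of $F$.
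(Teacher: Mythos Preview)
Your proof is correct and rests on the same underlying observation as the paper's: chains in $U_p$ split according to whether or not they begin at $p$. The paper packages this as a decomposition $(C^nF)(U_p)\cong (C^{n-1}F)(U_p^*)\times (C^nF)(U_p^*)$ (with $U_p^*=U_p\smallsetminus\{p\}$), under which the differential becomes $(a,b)\mapsto (b-\di^*a,\di^*b)$, so exactness is immediate by a cone-type argument; your contracting homotopy $s_n$ is precisely the map $(a,b)\mapsto (0,a)$ in this decomposition, so the two computations coincide. One minor remark: the paper records the sharper vanishing $C^nF=0$ for $n>\dim X$ (rather than $n\geq |X|$), which is what feeds into the subsequent corollary bounding cohomological dimension.
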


\begin{proof} By definition, $C^nF=0$ for $n>\dim X$. It is also clear that  $C^nF$ are flasque. Let us see that
\[ 0\to F\to C^0F \to \cdots\to C^{\dim X}F\to 0\] is an exact sequence.  We have to prove that  $(C^\punto F)(U_p)$ is a resolution of $F(U_p)$. One has a decomposition
\[ (C^nF)(U_p)= \proda{p=x_0<\cdots <x_n } F_{x_n}\times \proda{p<x_0<\cdots <x_n } F_{x_n} = (C^{n-1}F)(U^*_p)\times (C^nF)(U^*_p)\] with $U_p^*:=U_p-\{ p\}$; via this decomposition, the differential  $\di \colon (C^nF)(U_p) \to (C^{n+1}F)(U_p)$ becomes:
\[ \di(a,b)=(b-\di^*a,\di^*b)\] with $\di^*$ the differential of   $(C^\punto F)(U_p^*)$. It is immediate now that every cycle is a boundary.
\end{proof}

This theorem, together with De Rham's theorem (\cite{Godement}, Thm. 4.7.1), yields that the cohomology groups of a sheaf can be computed with the standard resolution, i.e., $H^i_\phi(U,F)=H^i\Gamma_\phi(U,C^\punto F)$, for any open subset $U$ of $X$, any family of supports $\phi$ and any sheaf $F$ of abelian groups on $X$.

\begin{cor} For any finite topological space $X$, any sheaf $F$ of abelian groups on $X$ and any family of supports $\phi$, one has
\[ H^n_\phi(X,F)=0,\quad \text{for any } n>\text{\rm dim} X.\] 
\end{cor}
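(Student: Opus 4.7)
The plan is to exploit directly the standard flasque resolution $C^\punto F$ that was just constructed. Recall two ingredients already established in the excerpt: first, $C^\punto F$ is a resolution of $F$ by flasque sheaves; second, by the remark preceding the corollary (an application of De Rham's theorem, see \cite{Godement}, Thm. 4.7.1), we have
\[ H^n_\phi(X,F) = H^n \Gamma_\phi(X, C^\punto F) \]
for any family of supports $\phi$, since flasque sheaves are acyclic for $\Gamma_\phi$.

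Now I would simply observe the key vanishing built into the definition: for each integer $n$,
\[ (C^n F)(U) = \prod_{U \ni x_0 < \cdots < x_n} F_{x_n}, \]
and if $n > \dim X$ then by definition of dimension there are no strict chains $x_0 < \cdots < x_n$ in $X$, so this product is empty and $C^n F = 0$. Consequently the truncated complex $\Gamma_\phi(X, C^\punto F)$ is zero in degrees $n > \dim X$, and therefore its cohomology vanishes there. Combining with the identification above yields $H^n_\phi(X, F) = 0$ for $n > \dim X$.

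There is no real obstacle here; the whole statement is an immediate corollary of the resolution having length bounded by $\dim X$ together with the flasqueness that makes it compute cohomology with supports. The only thing worth being slightly careful about is invoking De Rham's theorem in the version appropriate for cohomology with supports $\phi$, but this is the standard fact that any flasque resolution computes $H^\bullet_\phi$ because flasque sheaves are $\Gamma_\phi$-acyclic for every family of supports.
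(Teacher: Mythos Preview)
Your proof is correct and is precisely the argument the paper intends: the corollary is stated without proof because it follows immediately from the preceding theorem (the flasque resolution $C^\punto F$ satisfies $C^nF=0$ for $n>\dim X$) together with the De Rham computation $H^n_\phi(X,F)=H^n\Gamma_\phi(X,C^\punto F)$.
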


Let $\M$ be an $\OO$-module, $U$ an open subset. For each $x\in U$ there is a natural map $\OO_x\otimes_{\OO(U)}\M(U)\to\M_x$. This induces a morphism $(C^n\OO)(U)\otimes_{\OO(U)}\M(U)\to (C^n\M)(U)$ and then a morphism of complexes of sheaves
$(C^\punto\OO)\otimes_\OO\M\to \C^\punto\M$.

\begin{prop}\label{qc-resolution} If $\M$ is quasi-coherent, then $(C^\punto\OO)\otimes_\OO\M\to \C^\punto\M$ is an isomorphism. Moreover, for any $p\in X$ and any open subset $U\subseteq U_p$, one has that
\[ \Gamma(U,C^\punto\OO)\otimes_{\OO_p}\M_p\to \Gamma(U,C^\punto \M)\] is an isomorphism.
\end{prop}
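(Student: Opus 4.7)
The plan is to reduce both statements to a stalkwise computation and exploit the fact that, on a finite space, the products appearing in $C^n F$ are \emph{finite}, so tensor products commute with them.

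First I would observe that the first assertion is local on $X$, so it suffices to check that the map is an isomorphism on the stalk at each $p\in X$, i.e., on sections over $U_p$. This reduces the first statement to the case $U=U_p$ of the second. So I would focus on the second assertion: for $U\subseteq U_p$, show that
\[
\Gamma(U,C^n\OO)\otimes_{\OO_p}\M_p\longrightarrow \Gamma(U,C^n\M)
\]
is an isomorphism for each $n$.

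By the definition of the standard resolution,
\[
\Gamma(U,C^n\OO)=\prod_{\substack{x_0<\cdots<x_n\\ x_i\in U}}\OO_{x_n},\qquad
\Gamma(U,C^n\M)=\prod_{\substack{x_0<\cdots<x_n\\ x_i\in U}}\M_{x_n},
\]
and these products are finite because $X$ is finite. For any chain $x_0<\cdots<x_n$ with $x_i\in U\subseteq U_p$, one has $p\leq x_0\leq x_n$, so by Theorem~\ref{qc} the canonical map
\[
\OO_{x_n}\otimes_{\OO_p}\M_p\longrightarrow \M_{x_n}
\]
is an isomorphism (this is where quasi-coherence of $\M$ is used). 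Because tensor product commutes with finite direct products, taking the product over all such chains gives
\[
\Gamma(U,C^n\OO)\otimes_{\OO_p}\M_p
=\prod \bigl(\OO_{x_n}\otimes_{\OO_p}\M_p\bigr)
=\prod \M_{x_n}
=\Gamma(U,C^n\M).
\]
The only thing to check is that this identification coincides with the natural morphism in the statement, which is immediate from the construction: the natural map $\Gamma(U,C^n\OO)\otimes_{\OO(U)}\M(U)\to\Gamma(U,C^n\M)$ factors through $\OO_p\otimes\M_p$ because $p\in U$ (since $U\subseteq U_p$ forces $p\leq x$ for all $x\in U$, so the evaluation $\M(U)\to \M_p$ is well defined via the minimum $p$ of $U_p$)\,—\,on each factor it is exactly the quasi-coherence map above.

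The main (very mild) obstacle is just bookkeeping: confirming that the map defined sheaf-theoretically via $\OO_x\otimes_{\OO(U)}\M(U)\to\M_x$ agrees with the factor-wise isomorphism $\OO_{x_n}\otimes_{\OO_p}\M_p\to\M_{x_n}$ after passing to stalks. Once the finiteness of the index set is invoked so that $\otimes$ distributes over $\prod$, the rest is a direct application of Theorem~\ref{qc}. Taking $U=U_p$ and passing to stalks then gives the first assertion.
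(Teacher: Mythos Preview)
Your proof is correct and follows exactly the paper's approach: reduce to the factor-wise isomorphism $\OO_{x_n}\otimes_{\OO_p}\M_p\to\M_{x_n}$ supplied by quasi-coherence (Theorem~\ref{qc}), and use that tensor product commutes with the finite product indexing $\Gamma(U,C^nF)$. One small slip in your aside: $U\subseteq U_p$ does \emph{not} force $p\in U$, so there is no restriction map $\M(U)\to\M_p$ in general; this is harmless, since the morphism in the second assertion is defined directly factor-wise via $r_{p,x_n}$, and for the first assertion you only need $U=U_p$, where indeed $\OO(U_p)=\OO_p$ and $\M(U_p)=\M_p$.
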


\begin{proof} Since $\M$ is quasi-coherent, for any $x\in U$, the natural map $\OO_x\otimes_{\OO_p}\M_p\to\M_x$ is an isomorphism. Hence, $(C^n\OO)(U)\otimes_{\OO_p}\M_p\to (C^n\M)(U)$ is an isomorphism, so we obtain the second part of the statement. The first part follows from the second, taking $U=U_p$.
\end{proof}

\medskip
\noindent{\it Integral functors.}
\medskip

For any ringed space $(X,\OO_X)$ we shall denote by $D(X)$ the (unbounded) derived category of complexes of $\OO_X$-modules and by $D_{\rm qc}(X)$ the faithful subcategory of complexes with quasi-coherent cohomology. We shall denote by $D(\text{{\rm Qcoh}(X)})$ the derived category of complexes of quasi-coherent $\OO_X$-modules. For a ring $A$, $D(A)$ denotes the derived category of complexes of $A$-modules.

Let $X,X'$ be two ringed finite spaces, and let $\pi\colon X\times X'\to X, \pi'\colon X\times X'\to X'$ be the natural projections. Given an object $\K\in D(X\times X')$, one defines the integral functor of kernel $\K$ by:

\[ \aligned \Phi_\K\colon D(X)&\to D (X')\\ \M &\mapsto \Phi_\K(\M)= \RR \pi'_*( \K\overset\LL\otimes \LL \pi^*\M)\endaligned\]

In general, if we take $\K$ in $D_{\rm qc}(X\times X')$, $\Phi_K$ does not map $D_{\rm qc}(X)$ into $D_{\rm qc}(X')$; the problem is that $\RR \pi'_*$ does not preserve quasi-coherence in general. However, we shall see that, for finite spaces, this holds.

\section{Finite spaces}\label{Sect-FinSp}

\begin{defn} A {\it finite space}  is a ringed finite space $(X,\OO)$ such that for any $p\leq q$ the morphism $\OO_p\to\OO_q$ is flat.
\end{defn}

Any open subset of a finite space is a finite space. The product of two finite spaces is a finite space.

\begin{prop} Let $(X,\OO)$ be a  finite space. Then, the kernel of any morphism between quasi-coherent $\OO$-modules is also quasi-coherent. Moreover, if
\[ 0\to\M'\to\M\to\M''\to 0\] is an exact sequence of  $\OO$-modules and two of them are quasi-coherent, then the third is quasi-coherent too. In particular, the category of quasi-coherent $\OO$-modules on a finite space is an abelian subcategory of the category of $\OO$-modules.
\end{prop}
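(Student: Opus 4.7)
The plan is to verify quasi-coherence stalkwise using the characterization of Theorem \ref{qc}: an $\OO$-module $\M$ is quasi-coherent iff the natural map $\M_p \otimes_{\OO_p} \OO_q \to \M_q$ is an isomorphism for every $p \leq q$. The flatness hypothesis on the restriction morphisms $\OO_p \to \OO_q$ is exactly what is needed to make this test compatible with kernels and with short exact sequences.

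First I would handle the kernel. Given a morphism $f\colon \M \to \Nc$ of quasi-coherent $\OO$-modules, let $\K = \Ker f$. Since taking stalks is exact, $\K_p = \Ker(\M_p \to \Nc_p)$ for every $p$. Fix $p \leq q$. Tensoring the left-exact sequence $0 \to \K_p \to \M_p \to \Nc_p$ with $\OO_q$ over $\OO_p$, which is exact by flatness, gives a left-exact sequence
\[ 0 \to \K_p\otimes_{\OO_p}\OO_q \to \M_p\otimes_{\OO_p}\OO_q \to \Nc_p\otimes_{\OO_p}\OO_q. \]
By quasi-coherence of $\M$ and $\Nc$, the last two terms identify with $\M_q$ and $\Nc_q$, so the first term identifies with $\Ker(\M_q \to \Nc_q) = \K_q$. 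Thus $\K_p \otimes_{\OO_p} \OO_q \to \K_q$ is an isomorphism, so $\K$ is quasi-coherent.

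For the statement about short exact sequences $0 \to \M' \to \M \to \M'' \to 0$, I would again pass to stalks. For each $p \leq q$, flatness of $\OO_p \to \OO_q$ produces an exact sequence
\[ 0 \to \M'_p\otimes_{\OO_p}\OO_q \to \M_p\otimes_{\OO_p}\OO_q \to \M''_p\otimes_{\OO_p}\OO_q \to 0, \]
which maps to the exact sequence $0 \to \M'_q \to \M_q \to \M''_q \to 0$ via the canonical comparison maps. The five lemma then shows that if any two of the three comparison maps are isomorphisms, so is the third; hence if two of $\M', \M, \M''$ are quasi-coherent, the third is as well.

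Finally, the abelian subcategory assertion is formal: cokernels of morphisms of quasi-coherent modules are already known to be quasi-coherent in any ringed space, kernels are quasi-coherent by the first step, and the image of $f\colon \M \to \Nc$ sits in short exact sequences $0 \to \Ker f \to \M \to \Ima f \to 0$ and $0 \to \Ima f \to \Nc \to \Coker f \to 0$, so by the two-out-of-three property just proved, $\Ima f$ is quasi-coherent. Together with closure under direct sums (already known), this gives the abelian subcategory structure. No step looks delicate; the only nontrivial input is flatness, which is used precisely once at each step to preserve exactness after tensoring.
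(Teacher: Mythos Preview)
Your proof is correct and is exactly the argument the paper has in mind: the paper simply writes ``It follows easily from Theorem \ref{qc} and the flatness assumption,'' and you have spelled out those easy details. Your stalkwise verification via flat base change and the five lemma is the intended route.
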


\begin{proof} It follows easily from Theorem \ref{qc} and the flatness assumption.
\end{proof}

\begin{ejems} \begin{enumerate} \item Let $(X,\OO$ be a noetherian ringed finite space (i.e., $\OO_p$ is a noetherian ring for any $p\in X$). Then $(X,\OO)$ is a finite space if and only if $\OO$ is coherent (Corollary \ref{O-coherent}).
\item Any finite topological space $X$ is a  finite space (with $\OO=\ZZ$), since the restrictions morphisms are the identity.
\item If $X$ is the ringed finite space associated to a (locally affine) finite affine covering of a quasi-compact and quasi-separated scheme $S$ (see Examples \ref{ejemplos}.3 and \ref{ejemplos}.4), then $X$ is a finite space. This follows from the following fact: if $V\subset U$ is an inclusion between two affine open subsets, the restriction morphism $\OO_S(U)\to\OO_S(V)$ is flat.
\end{enumerate}
\end{ejems}

\subsection{Basic cohomological properties of finite spaces}

For this subsection $X$ is a finite space, i.e., a ringed finite space with flat restrictions.

\begin{ejem}\label{coh-schemes} Let $(S,\OO_S)$ be a quasi-compact and quasi-separated scheme and $(X,\OO)$ the finite space associated to a (locally affine) finite affine covering. The morphism $\pi\colon S\to X$ yields an equivalence between the categories of quasi-coherent modules on $S$ and $X$ (see \cite{Sancho}). Moreover, if $\M$ is a quasi-coherent module on $S$, then
\[ H^i(S,\M)=H^i(X,\pi_*\M),\] since, for any $x\in X$, $(R^i\pi_*\M)_x=H^i(\pi^{-1}(U_x),\M)=0$ for $i>0$, because $\pi^{-1}(U_x)$ is an affine scheme. The topological analog is:

Let $S$ be a path connected, locally path connected and locally homotopically trivial topological space and  let $\U=\{ U_1,\dots,U_n\}$ be a (locally homotopically trivial) finite covering of $S$. Let $X$ be the associated finite (topological) space and $\pi\colon S\to X$ the natural continous map. This morphism  yields an equivalence between the categories of locally constant sheaves on $S$ and $X$ (see \cite{Sancho}). Moreover, if $F$ is a locally constant sheaf on $S$, then
\[ H^i(S,F)=H^i(X,\pi_*F),\] since, for any $x\in X$, $(R^i\pi_*F)_x=H^i(\pi^{-1}(U_x),F)=0$ for $i>0$, because $\pi^{-1}(U_x)$ is homotopically trivial.

\end{ejem}

\begin{thm}\label{qc-of-proj} Let $\pi\colon X\times X'\to X$ be the natural projection, with $X$ a  finite space. For any quasi-coherent sheaf $\M$ on $X\times X'$ and any $i\geq 0$,   $R^i\pi_*\M$ is quasi-coherent.
\end{thm}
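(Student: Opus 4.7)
My plan is to reduce to a cohomology computation on $X'$ via the other projection and then exploit flatness. By Theorem \ref{qc}, it suffices to verify that for every $p \leq q$ in $X$ the natural map
\[
(R^i\pi_*\M)_p \otimes_{\OO_p} \OO_q \;\longrightarrow\; (R^i\pi_*\M)_q
\]
is an isomorphism. Since $(R^i\pi_*\M)_r = H^i(U_r \times X', \M)$ for $r \in \{p, q\}$, this amounts to establishing
\[
H^i(U_p \times X', \M) \otimes_{\OO_p} \OO_q \;\cong\; H^i(U_q \times X', \M).
\]

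First, I would collapse the left-hand side by pushing forward along the second projection. For $r \in\{p, q\}$, consider $\pi'_r\colon U_r\times X' \to X'$. For each $x' \in X'$ the preimage $(\pi'_r)^{-1}(U_{x'}) = U_r\times U_{x'} = U_{(r, x')}$ has a minimum, so Proposition \ref{aciclicity} yields $R^j\pi'_{r*}\M = 0$ for $j > 0$. The Leray spectral sequence therefore degenerates to
\[
H^i(U_r\times X', \M) \;=\; H^i(X', \pi'_{r*}\M),
\]
and $\pi'_{r*}\M$ is a sheaf of $\OO_r$-modules on $X'$ with stalks $(\pi'_{r*}\M)_{x'} = \M_{(r, x')}$.

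Next, I would compute both sides with the standard resolution. The complex $\Gamma(X', C^\punto \pi'_{r*}\M)$ computes $H^i(X', \pi'_{r*}\M)$, and its $n$-th term is the \emph{finite} product
\[
\Gamma(X', C^n \pi'_{r*}\M) \;=\; \prod_{x'_0 < \cdots < x'_n} \M_{(r, x'_n)},
\]
(finite because $X'$ is finite) of $\OO_r$-modules with $\OO_r$-linear differentials. Flatness of $\OO_p \to \OO_q$ makes $(-)\otimes_{\OO_p}\OO_q$ commute with cohomology; finiteness of the product makes it commute with the tensor; and quasi-coherence of $\M$, together with $\OO_{(q,x'_n)} = \OO_{(p,x'_n)}\otimes_{\OO_p}\OO_q$, gives $\M_{(p,x'_n)}\otimes_{\OO_p}\OO_q \cong \M_{(q,x'_n)}$. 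Assembling these three facts will produce an isomorphism of complexes
\[
\Gamma(X', C^\punto \pi'_{p*}\M)\otimes_{\OO_p}\OO_q \;\cong\; \Gamma(X', C^\punto \pi'_{q*}\M),
\]
whose $i$-th cohomology is the required isomorphism.

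The main obstacle will be purely bookkeeping: one must check that the termwise identifications in the last step are compatible with the differentials of the standard resolution, yielding an honest isomorphism of complexes rather than merely of terms. This is routine, since every structural map in sight—the restrictions along chains $x'_0 < \cdots < x'_n$ in $X'$ and the base-change $(-)\otimes_{\OO_p}\OO_q$—is induced by the restriction morphisms of $\M$, so commutes with the tensor operation.
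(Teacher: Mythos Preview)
Your proof is correct and follows essentially the same route as the paper: reduce via Theorem \ref{qc} to the stalkwise isomorphism, push forward along the second projection to $X'$ (using that $U_r\times U_{x'}$ has a minimum), compute with the standard resolution $C^\punto$, and conclude by combining flatness of $\OO_p\to\OO_q$, finiteness of the products, and the quasi-coherence identity $\M_{(p,x')}\otimes_{\OO_p}\OO_q\cong\M_{(q,x')}$. The only differences are cosmetic---you invoke the Leray spectral sequence where the paper uses the remark following Proposition \ref{aciclicity} directly, and you make explicit the compatibility with differentials that the paper leaves implicit.
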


\begin{proof} Let $p\in X$ and  $\pi'\colon U_p\times X'\to X'$ the natural projection. One has that $(R^i\pi_*\M)_p = H^i(U_p\times X',\M)= H^i(X', \pi'_*(\M_{\vert U_p\times X'}))$. By Theorem \ref{qc}, we have to prove that $$H^i(X',\pi'_*(\M_{\vert U_p\times X'}))\otimes_{\OO_p}\OO_{q}\to H^i(X',\pi''_*(\M_{\vert U_{q}\times X'}))$$ is an isomorphism for any  $p\leq q$, where $\pi''\colon U_{q}\times X'\to X'$ is the natural projection.

Let us denote   $\Nc = \pi'_*(\M_{\vert U_p\times X'}) $ and $\Nc' =\pi''_*(\M_{\vert U_{q}\times X'})$. Since
 $\OO_p\to\OO_{q}$ is flat, it is enough to prove that  $\Gamma (X', C^n \Nc)\otimes_{\OO_p}\OO_q \to \Gamma (X', C^n \Nc')$ is an  isomorphism. For any $x'\in X'$ one has
\[ \Nc_{x'} = \M_{(p,x')}\quad \text{ and }\quad \Nc'_{x'} = \M_{(q,x')}.\] Since $\M$ is quasi-coherent, $\Nc_{x'}\otimes_{\OO_p}\OO_q = \M_{(p,x')}\otimes_{\OO_{(p,x')}}\OO_{(q,x')} = \M_{(q,x')} =\Nc'_{x'}$. From the definition of $C^n$ it follows that $\Gamma (X', C^n \Nc)\otimes_{\OO_p}\OO_q = \Gamma (X', C^n \Nc')$; indeed,
\[ \Gamma (X', C^n \Nc)\otimes_{\OO_p}\OO_q = \proda{x'_0<\dots <x'_n} \Nc_{x'_n}\otimes_{\OO_p}\OO_q = \proda{x'_0<\dots <x'_n} \Nc'_{x'_n} = \Gamma (X', C^n \Nc')\]
\end{proof}

%

\begin{thm}\label{graph} Let $f\colon X\to Y$ be a morphism, $\Gamma\colon X\to X\times Y$ its graph and $\pi\colon X\times Y\to X$ the natural projection. For any quasi-coherent module $\M$ on $X$ the natural morphism
\[ \LL\pi^*\M\overset \LL\otimes \RR\Gamma_*\OO_X \to \RR\Gamma_*\M\] is an isomorphism (in the derived category).
\end{thm}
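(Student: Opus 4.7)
The plan is to verify that the natural map is an isomorphism stalkwise at each $(x,y)\in X\times Y$. Writing $V=\Gamma^{-1}(U_{(x,y)})=U_x\cap f^{-1}(U_y)\subseteq U_x$, one has $(\RR\Gamma_*\M)_{(x,y)}=\RR\Gamma(V,\M)$. On the other hand, $(\LL\pi^*\M)_{(x,y)}=\M_x\overset{\LL}\otimes_{\OO_x}\OO_{(x,y)}$ and $(\RR\Gamma_*\OO_X)_{(x,y)}=\RR\Gamma(V,\OO_X)$, whence the stalk of the left-hand side simplifies to
$$\M_x\overset{\LL}\otimes_{\OO_x}\OO_{(x,y)}\overset{\LL}\otimes_{\OO_{(x,y)}}\RR\Gamma(V,\OO_X)\;=\;\M_x\overset{\LL}\otimes_{\OO_x}\RR\Gamma(V,\OO_X).$$
So I am reduced to exhibiting a natural quasi-isomorphism $\M_x\overset{\LL}\otimes_{\OO_x}\RR\Gamma(V,\OO_X)\to\RR\Gamma(V,\M)$.

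To produce this, I would compute both sides with the standard flasque resolution $C^\punto$, so that $\RR\Gamma(V,\OO_X)=\Gamma(V,C^\punto\OO_X)$ and $\RR\Gamma(V,\M)=\Gamma(V,C^\punto\M)$. Since $V\subseteq U_x$ and $\M$ is quasi-coherent, Proposition~\ref{qc-resolution} supplies an isomorphism of complexes
$$\Gamma(V,C^\punto\OO_X)\otimes_{\OO_x}\M_x\;\overset{\sim}\longrightarrow\;\Gamma(V,C^\punto\M).$$
It remains only to promote the ordinary tensor product on the left to a derived one. For this I use that $V$ is a \emph{finite} poset, so each $\Gamma(V,C^n\OO_X)=\prod_{x_0<\cdots<x_n\text{ in }V}\OO_{x_n}$ is a \emph{finite} direct sum of rings $\OO_{x_n}$ with $x\leq x_n$. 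Since $X$ is a finite space, every restriction $\OO_x\to\OO_{x_n}$ is flat, so $\Gamma(V,C^\punto\OO_X)$ is a complex of flat $\OO_x$-modules; therefore the ordinary tensor product already computes $\M_x\overset{\LL}\otimes_{\OO_x}\Gamma(V,C^\punto\OO_X)$.

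The main obstacle is bookkeeping: one must check that the isomorphism produced above is really the value on stalks of the \emph{given} projection-formula morphism of the statement (obtained by applying the projection formula for $\Gamma$ to $\pi^*\M$ and $\OO_X$, and using $\Gamma^*\pi^*\M=\M$), and not merely some abstractly-built quasi-isomorphism. This amounts to chasing the projection-formula map through the standard resolution, where both arrows are visibly induced by the $\OO_x$-bilinear pairing $\OO_{x_n}\times\M_x\to\M_{x_n}$. Once this compatibility is confirmed, the stalkwise isomorphism at every $(x,y)\in X\times Y$ proves the theorem.
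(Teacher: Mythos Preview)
Your proof is correct and follows essentially the same route as the paper: compute stalks at $(x,y)$, identify both sides via the standard resolution using Proposition~\ref{qc-resolution}, and invoke flatness of each $\Gamma(V,C^n\OO_X)$ over $\OO_x$ (coming from the finite-space hypothesis) to replace the derived tensor product by the ordinary one. The only difference is that you are more explicit about the intermediate step $(\LL\pi^*\M)_{(x,y)}=\M_x\overset{\LL}\otimes_{\OO_x}\OO_{(x,y)}$ and about the compatibility with the natural projection-formula map, which the paper leaves implicit.
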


\begin{proof} For any $(x,y)$ in $X\times Y$, let us denote $U_{xy}=U_x\cap f^{-1}(U_y)$. The natural morphism
\[\Gamma(U_{xy},C^\punto\OO_X)\otimes_{\OO_x}\M_x\to \Gamma(U_{xy},C^\punto \M)\] is an isomorphism by Proposition \ref{qc-resolution}.
Now,  $$[\LL\pi^*\M\overset \LL\otimes \RR\Gamma_*\OO_X]_{(x,y)}= \M_x\otimes_{\OO_x}\Gamma(U_{xy},C^\punto\OO_X)$$ because $\Gamma(U_{xy},C^\punto\OO_X)$ is a complex of flat $\OO_x$-modules; on the other hand,  $[\RR\Gamma_*\M]_{(x,y)}= \Gamma(U_{xy},C^\punto \M)$. We are done.
\end{proof}

To conclude with the basic cohomological properties of finite spaces, let us prove two technical results that will be used in sections \ref{Section-Schematic} and \ref{Section-SchematicMorphisms}.

\begin{thm}\label{aciclico} Let  $X$ be a finite space,  $p\in X$ and $U\subset U_p$ an open subset. If $U $ is acyclic  (i.e., $H^i(U\ ,\OO)=0$ for any $i>0$), then
\begin{enumerate}
\item $\OO_p\to \OO(U )$ is flat.
\item For any quasi-coherent module $\M$  on $U_p$,
\[ H^i(U ,\M)=0,\qquad\text{ for } i>0,\] and the natural morphism
\[ \M_p\otimes_{\OO_p}\OO(U )\to \M(U )\] is an isomorphism.

\end{enumerate}
\end{thm}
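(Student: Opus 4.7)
The plan is to use the standard flasque resolution $C^\punto\OO$ to turn the acyclicity hypothesis on $U$ into a finite flat \emph{coresolution} of $\OO(U)$ by $\OO_p$-modules, and then to descend through this coresolution via short exact sequences to obtain both the flatness of $\OO(U)$ and the vanishing of higher cohomology with quasi-coherent coefficients.

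First I would observe that $\Gamma(U,C^\punto\OO)$ computes $H^i(U,\OO)$, so the acyclicity of $U$ yields a finite exact sequence
\begin{equation*}
0 \to \OO(U) \to \Gamma(U,C^0\OO) \to \Gamma(U,C^1\OO) \to \cdots \to \Gamma(U,C^N\OO) \to 0,
\end{equation*}
with $N\le \dim X$. The crucial point is that
$\Gamma(U,C^n\OO)=\proda{U\ni x_0<\cdots<x_n}\OO_{x_n}$
is a \emph{finite} direct sum of rings $\OO_{x_n}$ with $p\le x_n$. Since $X$ is a finite space, each $\OO_p\to\OO_{x_n}$ is flat; finite direct sums of flat modules are flat, so every $\Gamma(U,C^n\OO)$ is flat over $\OO_p$.

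For (1), set $Z^n=\Ker(\Gamma(U,C^n\OO)\to \Gamma(U,C^{n+1}\OO))$, so that $Z^0=\OO(U)$ and $Z^{N+1}=0$. The long exact sequence breaks into short exact sequences
\begin{equation*}
0 \to Z^n \to \Gamma(U,C^n\OO) \to Z^{n+1} \to 0.
\end{equation*}
In any short exact sequence $0\to A\to B\to C\to 0$ of $\OO_p$-modules with $B$ and $C$ flat, $A$ is flat as well (by the Tor long exact sequence, since $\mathrm{Tor}_1^{\OO_p}(B,-)=\mathrm{Tor}_1^{\OO_p}(C,-)=\mathrm{Tor}_2^{\OO_p}(C,-)=0$). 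Applying this by descending induction starting from the trivially flat $Z^{N+1}=0$, every $Z^n$ is flat over $\OO_p$. In particular $\OO(U)=Z^0$ is flat, proving (1).

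For (2), now that every $Z^n$ is flat, tensoring each of the above short exact sequences with $\M_p$ preserves exactness, and splicing gives an exact sequence
\begin{equation*}
0 \to \OO(U)\otimes_{\OO_p}\M_p \to \Gamma(U,C^0\OO)\otimes_{\OO_p}\M_p \to \Gamma(U,C^1\OO)\otimes_{\OO_p}\M_p \to \cdots.
\end{equation*}
By Proposition \ref{qc-resolution}, $\Gamma(U,C^n\OO)\otimes_{\OO_p}\M_p=\Gamma(U,C^n\M)$, so the tensored complex is $\Gamma(U,C^\punto\M)$, whose cohomology is $H^i(U,\M)$. Exactness therefore gives $H^i(U,\M)=0$ for $i>0$ and $\M(U)=H^0(U,\M)=\OO(U)\otimes_{\OO_p}\M_p$, as claimed. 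The main conceptual obstacle is recognizing that the acyclicity of $U$ provides not a flat resolution of $\OO(U)$ in the usual sense but a finite flat \emph{coresolution}, so that flatness of $\OO(U)$ must be extracted by descending induction through the short exact sequences; once this is noticed, everything else is a direct appeal to Proposition \ref{qc-resolution} and the Tor long exact sequence.
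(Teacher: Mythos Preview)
Your proof is correct and follows essentially the same route as the paper: use the finite flasque resolution $C^\punto\OO$ to get an exact sequence $0\to\OO(U)\to(C^0\OO)(U)\to\cdots\to(C^N\OO)(U)\to 0$ of flat $\OO_p$-modules, deduce flatness of $\OO(U)$, then tensor with $\M_p$ and invoke Proposition \ref{qc-resolution}. The paper compresses your descending induction on the $Z^n$ into the single sentence ``Hence $\OO(U)$ is a flat $\OO_p$-module'' and phrases the second step as ``the sequence remains exact after tensoring,'' but your explicit splitting into short exact sequences and appeal to the Tor long exact sequence is exactly the content of those claims.
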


\begin{proof} By hypothesis, $(C^\punto \OO)(U )$ is a finite resolution of  $\OO(U )$. Moreover, $(C^i\OO)(U)$ is a flat $\OO_p$-module because $X$ is a finite space.  Hence $\OO(U )$ is a flat $\OO_p$-module. For the second part, one has an exact sequence of flat  $\OO_p$-modules
\[ 0\to \OO(U )\to (C^0\OO)(U )\to\cdots\to (C^n\OO)(U )\to 0\] Hence, the sequence remains exact after tensoring by  $\otimes_{\OO_p}\M_p$. One concludes by Proposition \ref{qc-resolution}.
\end{proof}

\begin{prop}\label{preservacion-cuasi} Let $f\colon X\to Y$ be a morphism between finite spaces, $S$  another finite space and $f\times 1\colon X\times S\to Y\times S$ the induced morphism.  If $R^if_*$ preserves quasi-coherence, so does $ R^i(f\times 1)_*$. Consequently, given morphisms $f\colon X\to Y$ and $f'\colon X'\to Y'$, if $\RR f_*$ and $\RR f'_*$ preserve quasi-coherence, then $\RR (f\times f')_*$ preserves quasi-coherence, with $f\times f'\colon X\times X'\to Y\times Y'$.
\end{prop}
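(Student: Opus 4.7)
The stalk computation $[R^i(f\times 1)_*\M]_{(y,s)} = H^i(f^{-1}(U_y)\times U_s,\M)$ is begging to be collapsed via the ``factor with a minimum is acyclic'' remark after Proposition \ref{aciclicity}. My plan is, for each $s\in S$, to introduce the auxiliary sheaf $\M^s := (\pi_X)_*(\M_{\vert X\times U_s})$ on $X$, whose stalks are $(\M^s)_x = \M(U_x\times U_s) = \M_{(x,s)}$. A short check using $\OO_{(x,s)} = \OO_x\otimes_k\OO_s$ together with quasi-coherence of $\M$ shows that $\M^s$ is itself quasi-coherent on $X$. The cited remark then yields the key identification
\[ [R^i(f\times 1)_*\M]_{(y,s)} \;=\; H^i\bigl(f^{-1}(U_y),\,\M^s_{\vert f^{-1}(U_y)}\bigr) \;=\; [R^if_*\M^s]_y, \]
which by hypothesis is already quasi-coherent in the variable $y$.

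To pass from ``quasi-coherent in $y$'' to ``quasi-coherent on $Y\times S$'', I would verify Theorem \ref{qc} at a comparison $(y,s)\leq (y',s')$. Since $\OO_{(y,s)} = \OO_y\otimes_k\OO_s$, the base change $-\otimes_{\OO_{(y,s)}}\OO_{(y',s')}$ factors as $-\otimes_{\OO_y}\OO_{y'}$ followed by $-\otimes_{\OO_s}\OO_{s'}$. The first factor is absorbed by the hypothesis applied to the quasi-coherent sheaf $\M^s$: $[R^if_*\M^s]_y\otimes_{\OO_y}\OO_{y'} = [R^if_*\M^s]_{y'}= H^i(f^{-1}(U_{y'}),\M^s)$. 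What remains is to show that
\[ H^i\bigl(f^{-1}(U_{y'}),\M^s\bigr)\otimes_{\OO_s}\OO_{s'} \;\longrightarrow\; H^i\bigl(f^{-1}(U_{y'}),\M^{s'}\bigr) \]
is an isomorphism.

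This last identification is where the main technical point lives. I would compute both sides via the standard resolution $C^\punto$ and observe that each $\Gamma(f^{-1}(U_{y'}),C^n\M^s) = \prod_{x_0<\cdots<x_n} \M_{(x_n,s)}$ is a \emph{finite} product (because $X$ is finite), hence commutes with $-\otimes_{\OO_s}\OO_{s'}$; and that stalkwise one has $\M_{(x_n,s)}\otimes_{\OO_s}\OO_{s'} = \M_{(x_n,s')}$ by quasi-coherence of $\M$ combined with $\OO_{(x_n,s)}\otimes_{\OO_s}\OO_{s'} = \OO_{(x_n,s')}$. Finally, the flatness of $\OO_s\to\OO_{s'}$ (supplied by the finite space hypothesis) lets the tensor product slide past cohomology, identifying both sides and closing the argument with $[R^if_*\M^{s'}]_{y'} = [R^i(f\times 1)_*\M]_{(y',s')}$.

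For the ``consequently'' clause I would factor $f\times f' = (f\times 1_{Y'})\circ (1_X\times f')$ and apply the first part twice, then invoke the Leray spectral sequence $E_2^{p,q} = R^p(f\times 1_{Y'})_*\,R^q(1_X\times f')_*\M \Rightarrow R^{p+q}(f\times f')_*\M$. Since quasi-coherent modules on a finite space form an abelian subcategory, the convergence of a spectral sequence of quasi-coherent sheaves yields quasi-coherent $R^n(f\times f')_*\M$ for every $n$.
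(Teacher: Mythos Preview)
Your proof is correct and follows essentially the same route as the paper: introduce the pushforward $\M^s=\pi_*\M$ along $X\times U_s\to X$, identify $[R^i(f\times 1)_*\M]_{(y,s)}=[R^if_*\M^s]_y$, and check the quasi-coherence criterion of Theorem~\ref{qc} along the two ``axis'' directions $y\leq y'$ and $s\leq s'$; then factor $f\times f'=(f\times 1)\circ(1\times f')$ and use Leray (the paper leaves this last step implicit). The one genuine difference is in the $s$-direction: the paper simply invokes Theorem~\ref{qc-of-proj} (quasi-coherence of $R^i\pi'_*$ for the projection $\pi'\colon U\times U_s\to U_s$) to get $H^i(U\times U_s,\M)\otimes_{\OO_s}\OO_{s'}\simeq H^i(U\times U_{s'},\M)$, whereas you reprove this special case by hand via the standard resolution, finiteness of the product, and flatness of $\OO_s\to\OO_{s'}$. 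Both arguments are fine; the paper's is shorter because it recycles a result already established, while yours is more self-contained.
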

\begin{proof} Let $\M$ be a quasi-coherent module on $X\times S$. Let us see that $R^i(f\times 1)_*\M$ is quasi-coherent. Since the question is local, we may assume that  $S=U_s$. We have to prove that the natural morphisms
\[\aligned \left[ R^i(f\times 1)_*\M\right]_{(y,s)}\otimes_{\OO_y}\OO_{y'}&\to [R^i(f\times 1)_*\M]_{(y',s)}\\ [R^i(f\times 1)_*\M]_{(y,s)}\otimes_{\OO_s}\OO_{s'}&\to [R^i(f\times 1)_*\M]_{(y,s')}\endaligned\] are isomorphisms for any $y\leq y'$ in $Y$ and $s'\in U_s$. For the first, we have
\[ [R^i(f\times 1)_*\M]_{(y,s)}=H^i (f^{-1}(U_y)\times U_s,\M) = [R^if_* (\pi_*\M)]_y\] with $\pi\colon X\times U_s\to X$ the natural projection. Since $R^if_* (\pi_*\M)$ is quasi-coherent, one concludes the first  isomorphism. The second follows from the fact that for every open subset $U$ of $X$, the natural morphism  $H^i(U\times U_s,\M)\otimes_{\OO_s}\OO_{s'}\to H^i(U\times U_{s'},\M)$ is an isomorphism, because $R^i\pi'_*(\M_{\vert U\times U_s})$ is quasi-coherent, with $\pi'\colon U\times U_s\to U_s$ the natural projection.

For the consequence, put $f\times f'$ as the composition of $f\times 1$ and $1\times f'$.
\end{proof}

\subsection{Affine finite spaces}\label{Subsect-AffineFinSp}

Let $(X,\OO)$ be an arbitrary ringed space,  $A=\Gamma(X,\OO)$ and $\pi\colon X \to (*,A)$ the natural morphism. Let $\M$ be an $\OO$-module.

\begin{defn} We say that $\M$ is  {\it acyclic} if $H^i(X,\M)=0$ for any $i>0$. We say that $X$ is  {\it acyclic} if $\OO$ is acyclic.
We say that $\M$ is  {\it generated by its global sections} if the natural map $\pi^*\pi_*\M\to \M$ is surjective. In other words, for any $x\in X$, the natural map
\[ M\otimes_A\OO_x\to \M_x,\qquad M=\Gamma(X,\M),\] is surjective.
\end{defn}

If $\M \to \Nc$ is surjective and $\M$ is generated by its global sections, then $\Nc$ too. If $f\colon X\to Y$ is a morphism of ringed spaces and $\M$ is an $\OO_Y$-module generated by its global sections, then $f^*\M$ is generated by its global sections.

\begin{defn} We say that $(X,\OO)$ is an {\it affine ringed space} if it is acyclic and $\pi^*$ (or $\pi_*=\Gamma(X,\quad )$) gives an equivalence between the category of $A$-modules and the category of quasi-coherent $\OO$-modules. We say that $(X,\OO)$ is {\it quasi-affine} if every quasi-coherent $\OO$-module is generated by its global sections.  We say that $(X,\OO)$ is {\it Serre-affine} if every quasi-coherent module is acyclic.
\end{defn}

Obviously, any affine ringed space is quasi-affine. Before we see the basic properties and relations between these concepts on a finite space, let us see some examples for (may be non-finite) ringed spaces. For the proofs, see \cite{Sancho2}.

\begin{ejems}
\begin{enumerate}
\item Let $S$ be a connected, locally path-connected and locally simply connected topological space. Then $(S,\ZZ)$ is an affine ringed space if and only if $S$ is homotopically trivial.
\item  Let $(S,\C^\infty_S)$ be a  Haussdorff differentiable manifold (more generally, a differentiable space) with a countable basis. Then $(S,\C^\infty_S)$ is affine if and only if $S$ is compact.
\item Let $S=\Spec A$ be an affine scheme ($\OO=\widetilde A$ the sheaf of localizations). Then it is affine, quasi-affine and Serre-affine. A quasi-compact and quasi-separated scheme $S$ is affine (in the usual sense of schemes) if and only if it is affine (in our sense) or Serre-affine (Serre's criterion for affineness). A quasi-compact scheme $S$ is quasi-affine if and only if it is an open subset of an affine scheme.
\item Let $(S,\OO_S)$ be a quasi-compact quasi-separated scheme and $\pi \colon S\to X$ the finite space associated to a (locally affine) finite  covering of $S$. Then $S$ is an affine scheme if and only if $X$ is an affine finite space. Even more, an open subset $U$ of $X$ is affine if and only if $\pi^{-1}(U)$ is affine.
\end{enumerate}
\end{ejems}

\begin{prop}\label{U_p-is-affine} If $X$ is a  ringed finite space with a minimum, then it is affine, quasi-affine and Serre-affine. Hence any  ringed finite space is locally affine.
\end{prop}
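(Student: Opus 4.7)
The proof has three claims to verify, and each follows very cleanly once we notice that a finite space with a minimum $p$ satisfies $X = U_p$, so that for any sheaf $F$ one has $\Gamma(X,F) = F_p$; in particular $A := \Gamma(X,\OO) = \OO_p$.

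First, Serre-affineness. By Proposition \ref{aciclicity}, $H^i(X,F) = 0$ for \emph{every} sheaf of abelian groups $F$ on $X$ when $X$ has a minimum, so in particular every quasi-coherent $\OO$-module is acyclic, and $\OO$ itself is acyclic. This gives both the acyclicity of $X$ and Serre-affineness.

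Second, affineness. Since $X$ has a minimum $p$ and $A = \OO_p$, Corollary \ref{corqc} tells us that the functors $\M \mapsto \Gamma(X,\M)$ and $M \mapsto \widetilde{M} = \pi^*M$ are mutually inverse equivalences between quasi-coherent $\OO$-modules and $A$-modules. Combined with the acyclicity just noted, this is exactly the definition of an affine ringed space.

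Third, quasi-affineness. If $\M$ is a quasi-coherent $\OO$-module and $M = \Gamma(X,\M) = \M_p$, then by Theorem \ref{qc} the natural map
\[ M \otimes_A \OO_x \;=\; \M_p \otimes_{\OO_p} \OO_x \;\longrightarrow\; \M_x \]
is in fact an isomorphism for every $x \in X$. Hence $\pi^*\pi_*\M \to \M$ is an isomorphism (and in particular surjective), so $\M$ is generated by its global sections.

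Finally, the ``locally affine'' consequence: for any point $p$ of an arbitrary ringed finite space $X$, the open neighborhood $U_p$ has $p$ as its minimum, and therefore, by the previous three parts applied to $(U_p, \OO_{\vert U_p})$, it is affine. The main thing to be careful about is not a conceptual obstacle but simply the bookkeeping that $A = \OO_p$ in this setting, which ensures that the tensor product $M \otimes_A \OO_x$ appearing in the definition of ``generated by global sections'' is literally the tensor product $\M_p \otimes_{\OO_p}\OO_x$ that the quasi-coherence criterion (Theorem \ref{qc}) controls.
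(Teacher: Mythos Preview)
Your proof is correct and essentially matches the paper's approach: the paper simply cites Corollary \ref{corqc} and Proposition \ref{aciclicity}, which is exactly what you use for affineness and Serre-affineness. The only minor difference is that for quasi-affineness you argue directly via Theorem \ref{qc}, whereas the paper relies on the observation (stated just before the definitions) that any affine ringed space is automatically quasi-affine; since Corollary \ref{corqc} is itself deduced from Theorem \ref{qc}, this amounts to the same thing.
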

\begin{proof} It follows from  Corollary \ref{corqc} and Proposition \ref{aciclicity}.
\end{proof}

From now on, assume that $(X,\OO)$ is a finite space.

\begin{thm}\label{AffineFinSp} Let $X$ be a finite space. The following conditions are equivalent:
\begin{enumerate}
\item $X$ is affine.
\item $X$ is acyclic and quasi-affine.
\item $X$ is quasi-affine and Serre-affine.

\end{enumerate}
\end{thm}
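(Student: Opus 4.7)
The strategy is to prove (1) $\Rightarrow$ (2), note the trivial (3) $\Rightarrow$ (2), then (2) $\Rightarrow$ (3), and finally (2) $\Rightarrow$ (1). A preliminary tool used throughout is that, on a finite space, sheaf cohomology commutes with arbitrary direct sums: this follows from the standard resolution, since each
\[(C^n F)(U) = \prod_{x_0 < \cdots < x_n \in U} F_{x_n}\]
is a \emph{finite} product, hence commutes with direct sums (and with tensor products over $\OO_p$). In particular, for any set $I$ we have $\Gamma(X,\OO^{(I)}) = A^{(I)}$, and if $X$ is acyclic then every free $\OO$-module is $\Gamma$-acyclic.

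The implication (1) $\Rightarrow$ (2) is immediate: acyclicity is built into the definition of affine, and for any quasi-coherent $\M$ the counit $\pi^*\pi_*\M \to \M$ is an isomorphism (being the counit of an equivalence), hence surjective. The implication (3) $\Rightarrow$ (2) is just Serre-affineness applied to $\OO$. For (2) $\Rightarrow$ (3), given a quasi-coherent $\M$, quasi-affineness produces a surjection $\OO^{(I)} \twoheadrightarrow \M$ whose kernel $\K$ is again quasi-coherent by the flatness hypothesis on $X$. The long exact sequence and the $\Gamma$-acyclicity of $\OO^{(I)}$ yield $H^i(X,\M)\cong H^{i+1}(X,\K)$ for $i\geq 1$; iterating this shift to kernels of surjections onto $\K$, and so on, one obtains $H^i(X,\M)\cong H^{i+n}(X,\K^{(n)})$ with $\K^{(n)}$ quasi-coherent for every $n\geq 0$. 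Since $X$ is finite it has finite cohomological dimension (bounded by $\dim X$), so for $n$ large enough the right-hand side vanishes and $\M$ is acyclic.

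For the main implication (2) $\Rightarrow$ (1), by (2) $\Rightarrow$ (3) we may also assume Serre-affineness, and we must show $\pi^* \dashv \pi_*$ restricts to an equivalence. For the unit $M \to \pi_*\pi^*M$, pick a free presentation $A^{(J)} \to A^{(I)} \to M \to 0$ and apply the right-exact functor $\pi^*$ to get $\OO^{(J)} \to \OO^{(I)} \to \pi^*M \to 0$. Setting $\K = \ker(\OO^{(I)}\to \pi^*M)$, Serre-affineness gives $H^1(X,\K)=0$, so taking $\Gamma(X,-)$ yields an exact sequence $A^{(J)} \to A^{(I)} \to \Gamma(X,\pi^*M) \to 0$ (surjectivity of $A^{(J)} \to \Gamma(X,\K)$ coming from the same argument applied to the surjection $\OO^{(J)}\twoheadrightarrow \K$), and comparison with the presentation of $M$ gives $M \xrightarrow{\sim}\Gamma(X,\pi^*M)$. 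For the counit $\pi^*\pi_*\M \to \M$, quasi-affineness makes it surjective; let $\K$ be its kernel. Applying $\pi_*$ and using $H^1(X,\K)=0$ produces $0 \to \pi_*\K \to \pi_*\pi^*\pi_*\M \to \pi_*\M \to 0$, and the triangle identity of the adjunction, combined with the unit isomorphism just established, forces $\pi_*\K = 0$. A second application of quasi-affineness, this time to $\K$ itself, gives $\pi^*\pi_*\K \twoheadrightarrow \K$ with vanishing source, so $\K=0$. The main obstacle is this last step, where quasi-affineness must be deployed twice and intertwined with the triangle identity to upgrade the unit isomorphism into the counit isomorphism.
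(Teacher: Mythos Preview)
Your proof is correct and follows essentially the same approach as the paper. The only cosmetic differences are the order of implications (the paper proves the cycle $(1)\Rightarrow(2)\Rightarrow(3)\Rightarrow(1)$ directly) and the direction of the dimension-shifting in $(2)\Rightarrow(3)$: the paper argues by descending induction on the cohomological degree, universally over all quasi-coherent $\M$, whereas you fix $\M$ and shift upwards via iterated kernels---both are standard d\'evissage variants.
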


\begin{proof} (1) $\Rightarrow$ (2) is immediate. (2) $\Rightarrow$ (3). We have to prove that any quasi-coherent module $\M$ is acyclic. By hypothesis, $\pi^*M\to\M$ is surjective, with  $M=\M(X)$. Since $M$ is a quotient of a free  $A$-module, $\M$ is a quotient of a free $\OO$-module $\Lc$.
We have an exact sequence $0\to \K\to \Lc\to \M\to 0$. Since $X$ is acyclic, $H^i(X,\Lc)=0$ for any $i>0$. Then $H^d(X,\M)=0$, for $d=\dim X$. That is, we have proved that $H^d(X,\M)=0$ for {\it any} quasi-coherent module $\M$. Then $H^d(X,\K)=0$, so $H^{d-1}(X,\M)=0$, for {\it any} quasi-coherent module $\M$;  proceeding in this way we obtain that  $H^i(X,\M)=0$ for any $i>0$ and any quasi-coherent $\M$.

(3) $\Rightarrow$ (1). By hypothesis $X$ is acyclic and  $\pi_*$ is an exact functor over the category of quasi-coherent  $\OO$-modules. Let us see that $M\to \pi_*\pi^*M$ is an isomorphism for any  $A$-module $M$. If $M=A$, there is nothing to say. If $M$ is a free module, it is immediate. Since any $M$ is a cokernel of free modules, one concludes (recall the exactness of $\pi_*$). Finally, let us see that $\pi^*\pi_*\M\to \M$ is an isomorphism for any quasi-coherent $\M$. The surjectivity holds by hypothesis. If $K$ is the kernel, taking $\pi_*$ in the exact sequence
\[ 0\to K\to \pi^*\pi_*\M\to\M\to 0\] and taking into account that  $\pi_*\circ \pi^*=\Id$, we obtain that $\pi_*K=0$. Since $K$ is generated by its global sections, it must be  $K=0$.
\end{proof}

\begin{cor}\label{product-affine} If $X$ and $Y$ are two   affine (resp. quasi-affine, Serre-affine) spaces, then   $X\times Y$ is affine (resp. quasi-affine, Serre-affine). Moreover, if $A=\OO_X(X)$ and $B=\OO_Y(Y)$ are flat $k$-algebras, and $X$, $Y$ are affine, then $\Gamma(X\times_kY,\OO_{X\times_kY})=A\otimes_kB$.
\end{cor}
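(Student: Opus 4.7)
The plan is to handle the three properties in order, then deduce the affine case from Theorem \ref{AffineFinSp} and the moreover part from an iterated base change computation. The key tools throughout will be Theorem \ref{qc-of-proj} (pushforwards along projections preserve quasi-coherence) and the fact that $U_{(x,y)}=U_x\times U_y$ has a minimum, so by Proposition \ref{aciclicity} its higher cohomology with values in any sheaf vanishes.

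\emph{Serre-affine case.} Given $\M$ quasi-coherent on $X\times Y$, I would run the Leray spectral sequence for $\pi_Y\colon X\times Y\to Y$. Combined with Serre-affineness of $Y$ applied to the quasi-coherent sheaf $(\pi_Y)_*\M$, it will suffice to show $R^q(\pi_Y)_*\M=0$ for $q>0$. The stalk at $y$ is $H^q(X\times U_y,\M)$; a second Leray spectral sequence, for $\pi_X\colon X\times U_y\to X$, together with the vanishing of $H^j(U_{(x,y)},\M)$ for $j>0$, collapses this to $H^q(X,(\pi_X)_*(\M|_{X\times U_y}))$, which vanishes for $q>0$ because that pushforward is quasi-coherent and $X$ is Serre-affine.

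\emph{Quasi-affine case.} Set $M:=\Gamma(X\times Y,\M)$, $A:=\OO_X(X)$, $B:=\OO_Y(Y)$. Quasi-affineness of $X$ applied to $(\pi_X)_*\M$ yields a surjection $M\otimes_A\OO_{x_0}\twoheadrightarrow\Gamma(U_{x_0}\times Y,\M)$ for each $x_0$, and quasi-affineness of $Y$ applied to $(\pi_Y|_{U_{x_0}\times Y})_*(\M|_{U_{x_0}\times Y})$ gives $\Gamma(U_{x_0}\times Y,\M)\otimes_B\OO_{y_0}\twoheadrightarrow\M_{(x_0,y_0)}$. Tensoring the first with $\otimes_B\OO_{y_0}$ and composing, and using $\OO_{(x_0,y_0)}=\OO_{x_0}\otimes_k\OO_{y_0}$, I obtain a surjection $M\otimes_{A\otimes_k B}\OO_{(x_0,y_0)}\twoheadrightarrow\M_{(x_0,y_0)}$ that factors through $M\otimes_{\OO(X\times Y)}\OO_{(x_0,y_0)}$; hence the latter map is also surjective, so $\M$ is generated by its global sections. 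The affine case is then immediate from Theorem \ref{AffineFinSp}.

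\emph{Moreover.} The same Leray collapse gives $\Gamma(X\times Y,\OO)=\Gamma(Y,(\pi_Y)_*\OO)$, with stalks $((\pi_Y)_*\OO)_y=\Gamma(X,\OO_X\otimes_k\OO_y)$. The main obstacle is that commuting $\otimes_k\OO_y$ with $H^0$ requires $\OO_y$ to be flat over $k$: affineness of $Y$ makes $\widetilde{(\cdot)}\colon B\text{-mod}\to\text{QCoh}(Y)$ exact with $(\widetilde M)_y=M\otimes_B\OO_y$, forcing $B\to\OO_y$ to be flat, and combined with $B$ flat over $k$ this gives $\OO_y$ flat over $k$. Because $X$ is finite the products in $C^n\OO_X(X)$ are finite direct sums, which commute with $\otimes_k$; the standard resolution then yields $\Gamma(X,\OO_X\otimes_k\OO_y)=A\otimes_k\OO_y$, and iterating the same argument on $Y$ using flatness of $A$ over $k$ produces $\Gamma(Y,A\otimes_k\OO_Y)=A\otimes_k B$.
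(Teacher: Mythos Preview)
Your proof is correct and follows essentially the same approach as the paper's: both arguments project along one factor, use Theorem \ref{qc-of-proj} to keep quasi-coherence, invoke acyclicity of $U_x\times U_y$ to collapse the relevant spectral sequence, and then apply the Serre-affine/quasi-affine hypothesis on each factor in turn; the ``moreover'' is handled in both cases by using flatness (via Proposition \ref{derivedcat-affine}) to commute $\otimes_k$ with global sections. The only cosmetic differences are that you swap the roles of $X$ and $Y$ relative to the paper and compute the global sections via the standard resolution rather than by checking a sheaf isomorphism on stalks.
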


\begin{proof} Let $\pi\colon X\times Y\to X$ and $\phi\colon U_x\times Y\to Y$ be the natural projections. Let $\M$ be a quasi-coherent module on $X\times Y$ and $\M'=\M_{\vert U_x\times Y}$. Notice that $R^i\phi_*=0$ for any $i>0$ and $R^i\pi_*$, $\phi_*$ preserve quasi-coherence.

Assume that $X$ and $Y$ are Serre-affine.  Since $Y$ is Serre-affine, $R^i\pi_*\M=0$ for $i>0$; indeed, for each $x\in X$, $(R^i\pi_*\M)_x=H^i(U_x\times Y,\M)=H^i(Y,\phi_*\M')=0$. Then $H^i(X\times Y,\M)= H^i(X,\pi_*\M)=0$ for $i>0$, because $X$ is Serre-affine.

Assume that $X$ and $Y$ are  quasi-affine.  Since $Y$ is quasi-affine, the natural morphism $\pi^*\pi_*\M\to\M$ is surjective; indeed, taking the stalk at $(x,y)$, one obtains the morphism $M\otimes_k\OO_y \to (\phi_*\M')_y$, where $M=(\pi_*\M)_x=\Gamma(Y,\phi_*\M')$. Then, it suffices to see that $\pi_*\M$ is generated by its global sections; but this is immediate since $X$ is quasi-affine.

Now, by Theorem \ref{AffineFinSp}, if $X$ and $Y$ are affine, then $X\times Y$ is affine.

Assume also that $A=\OO_X(X)$ and $B=\OO_Y(Y)$ are flat $k$-algebras, and let us prove that $\Gamma(X\times_kY,\OO_{X\times_kY})=A\otimes_kB$.  It suffices to see that the natural map $\OO_X\otimes_k B\to {\pi}_*\OO_{X\times_kY}$ is an isomorphism, where $\OO_X\otimes_k B$ is the sheaf on $X$ defined by $(\OO_X\otimes_k B)(U)=\OO_X(U)\otimes_k B$ (which is a sheaf because $k\to B$ is flat). The question is local on $X$, hence we may assume that $X=U_x$ (notice that $\OO_x$ is a flat $k$-algebra, by Proposition \ref{derivedcat-affine}), and we have to prove that $\Gamma(U_x\times_kY,\OO_{U_x\times_kY})=\OO_x\otimes_k B$. It suffices to see that the natural morphism $\OO_x\otimes_k \OO_Y\to \phi_*\OO_{U_x\times_kY}$ is an isomorphism, which is immediate by taking the stalk at any $y\in Y$.
\end{proof}

In the topological case, being affine is equivalent to being homotopically trivial:

\begin{thm} {\rm (\cite{Quillen}, Proposition 7.8)} Let $X$ be a  connected finite topological space $(\OO=\ZZ$). Then $X$ is affine if and only if $X$ is homotopically trivial.
\end{thm}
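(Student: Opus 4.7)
My plan is to unwind what ``affine'' means in the topological setting ($\OO = \ZZ$) and reduce the claim to a pair of conditions ($\pi_1(X) = 1$ together with $\ZZ$-acyclicity), then close the loop with Hurewicz and Whitehead. First: since $\OO = \ZZ$ has identity restriction maps, Theorem \ref{qc} identifies quasi-coherent $\OO$-modules on $X$ with locally constant sheaves of abelian groups; and since $X$ is connected, $A = \Gamma(X, \ZZ) = \ZZ$ and $\pi^*$ sends an abelian group $M$ to the constant sheaf $\underline{M}$. On a connected space the natural map $\Hom(M, N) \to \Hom(\underline{M}, \underline{N})$ is a bijection, so $\pi^*$ is automatically fully faithful. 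Thus $X$ is affine if and only if (a) $H^i(X, \ZZ) = 0$ for all $i > 0$, and (b) every locally constant sheaf on $X$ is constant.

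Next I would translate (b) via the fundamental group. By Theorem 1 of the introduction (from \cite{Sancho}), the category of locally constant sheaves of abelian groups on the connected finite space $X$ is equivalent to the category of $\ZZ[\pi_1(X)]$-modules, with constant sheaves corresponding to those modules on which $\pi_1(X)$ acts trivially. Hence (b) says that every $\ZZ[\pi_1(X)]$-module is a trivial representation; applying this to the regular representation $\ZZ[\pi_1(X)]$ forces $\pi_1(X) = 1$. So $X$ is affine if and only if $\pi_1(X) = 1$ and $H^i(X, \ZZ) = 0$ for all $i > 0$.

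If $X$ is homotopically trivial, both conditions are immediate. For the converse, I would use that the order complex of $X$ is a finite CW complex of the same weak homotopy type as $X$, so all its homology is finitely generated. A short induction via universal coefficients then shows that the vanishing of $H^i(X, \ZZ)$ for $i > 0$ forces $H_i(X, \ZZ) = 0$ for $i > 0$: if $H_j(X,\ZZ) = 0$ for $0 < j < n$, then $H^n(X,\ZZ) = \Hom(H_n(X,\ZZ), \ZZ) = 0$ makes $H_n(X,\ZZ)$ torsion, and then $\text{Ext}(H_n(X,\ZZ), \ZZ) \hookrightarrow H^{n+1}(X,\ZZ) = 0$ kills $H_n(X,\ZZ)$ by finite generation. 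Combined with $\pi_1(X) = 1$, Hurewicz and Whitehead yield $\pi_i(X) = 0$ for all $i$, i.e.\ $X$ is homotopically trivial.

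The only substantive step is the identification of (b) with $\pi_1(X) = 1$, which rests on the equivalence between quasi-coherent $\ZZ$-modules on $X$ and representations of $\pi_1(X)$ already recorded in the introduction; once this is done, the cohomology-to-homology passage on the finite order complex of $X$ and the concluding Hurewicz/Whitehead argument are routine.
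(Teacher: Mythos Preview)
Your argument is correct. The paper itself gives no proof here---it simply refers the reader to \cite{Sancho2}---so there is no in-paper argument to compare against. One small remark: your appeal to ``Theorem~1 of the introduction'' for the equivalence between locally constant sheaves on $X$ and $\ZZ[\pi_1(X)]$-modules is slightly indirect, since that theorem concerns a finite model $X$ of another space $S$, not $X$ intrinsically. The cleanest justification is McCord's weak equivalence between $X$ and the geometric realization of its order complex $|\K(X)|$; the monodromy correspondence for the CW complex $|\K(X)|$ then gives exactly what you need, and $|\K(X)|$ is also what supplies the finitely generated homology you use in the universal-coefficients step. With that in place, your reduction of ``affine'' to $\pi_1(X)=1$ together with $H^i(X,\ZZ)=0$ for $i>0$, followed by the universal-coefficients induction and Hurewicz/Whitehead, is a complete and clean proof.
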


\begin{proof} See \cite{Sancho2}.
\end{proof}

\begin{prop}\label{tens-affine} Let $X$ be an affine finite space, $A=\OO(X)$. For any quasi-coherent modules $\M,\M'$ on $X$, the natural morphism $\M(X)\otimes_A\M'(X) \to(\M\otimes_\OO \M')(X)$ is an isomorphism.
\end{prop}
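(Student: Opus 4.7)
The plan is to reduce the statement to the formal fact that $\pi^*$ is both an equivalence of categories (here) and a symmetric monoidal functor (always), so that on global sections the tensor products must match.

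First, since $X$ is affine, Theorem \ref{AffineFinSp} (more precisely the definition of affineness together with its consequences) tells us that the adjoint pair $\pi^{*}\dashv \pi_{*}=\Gamma(X,\cdot)$ restricts to mutually inverse equivalences between $A$-modules and quasi-coherent $\OO$-modules, with unit and counit both isomorphisms. In particular, setting $M=\M(X)$ and $M'=\M'(X)$, the counits give canonical isomorphisms $\pi^{*}M\xrightarrow{\sim}\M$ and $\pi^{*}M'\xrightarrow{\sim}\M'$, and $\pi_{*}\circ\pi^{*}=\Id$ on $A$-modules.

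Next I would invoke the standard fact that $\pi^{*}$ commutes with tensor products: for any $A$-modules $N,N'$, the natural map $\pi^{*}N\otimes_{\OO}\pi^{*}N'\to\pi^{*}(N\otimes_{A}N')$ is an isomorphism. Stalkwise this is the identity $(N\otimes_{A}\OO_{x})\otimes_{\OO_{x}}(N'\otimes_{A}\OO_{x})=(N\otimes_{A}N')\otimes_{A}\OO_{x}$, which is immediate. Applying this with $N=M$, $N'=M'$ and then $\pi_{*}$, we obtain
\[
(\M\otimes_{\OO}\M')(X)\;\cong\;\pi_{*}\bigl(\pi^{*}M\otimes_{\OO}\pi^{*}M'\bigr)\;\cong\;\pi_{*}\pi^{*}(M\otimes_{A}M')\;=\;M\otimes_{A}M',
\]
where the last equality uses $\pi_{*}\pi^{*}=\Id$.

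The only thing to check (and the mildly fiddly point) is that the composite isomorphism one reads off from this chain is precisely the natural morphism $M\otimes_{A}M'\to(\M\otimes_{\OO}\M')(X)$ of the statement, i.e.\ the one sending $m\otimes m'$ to $m\otimes m'$ viewed as a global section of $\M\otimes_{\OO}\M'$. This is a purely formal diagram chase using the adjunction: the natural morphism factors as
\[
M\otimes_{A}M'\xrightarrow{\eta\otimes\eta}\pi_{*}\pi^{*}M\otimes_{A}\pi_{*}\pi^{*}M'\to\pi_{*}(\pi^{*}M\otimes_{\OO}\pi^{*}M'),
\]
and the above chain identifies both sides explicitly, so they agree. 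The only step one might worry about is the symmetric-monoidality of $\pi^{*}$, but as indicated this is pointwise trivial, so no real obstacle arises.
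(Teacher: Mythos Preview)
Your proposal is correct and follows essentially the same approach as the paper: use that $\pi^{*}$ is symmetric monoidal (the isomorphism $\pi^{*}M\otimes_{\OO}\pi^{*}M'\simeq\pi^{*}(M\otimes_{A}M')$), then invoke affineness of $X$ to identify $\M\simeq\pi^{*}M$, $\M'\simeq\pi^{*}M'$ and $\pi_{*}\pi^{*}=\Id$. The paper's proof is the one-line version of exactly this argument; your added check that the resulting isomorphism agrees with the canonical map is a welcome bit of care, but there is no substantive difference in strategy.
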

\begin{proof} For any $A$-modules $M,N$ one has an isomorphism $\pi^*M\otimes_\OO \pi^*N\overset\sim \to \pi^*(M\otimes_AN)$. One concludes because $X$ is affine.
\end{proof}

\begin{prop}\label{derivedcat-affine} Let $X$ be an affine finite space, $A=\OO(X)$. Then
\begin{enumerate}
\item For any $p\in X$, the natural map $A\to \OO_p$ is flat. In other words, the functor $$\pi^*\colon \{ A-\text{modules}\}\to \{\OO-\text{modules}\}$$ is exact.
\item The natural (injective) morphism $A\to \underset{p\in X}\prod \OO_p$ is faithfully flat.
\item The natural functors
\[  \xymatrix{ D({\rm\text{\rm Qcoh}}(X))\ar[rr]\ar[rd]_{\Gamma(X,\underline\quad)} &  & D_{\rm\text{\rm qc}}(X)\ar[ld]^{\RR\Gamma(X,\underline\quad)} \\  &  D(A) & }\] are equivalences.
\end{enumerate}
\end{prop}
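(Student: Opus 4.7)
\medskip
\noindent\textbf{Proof proposal.} For (1), the affineness hypothesis gives that $\pi^{\ast}\colon A\text{-Mod}\to \text{Qcoh}(X)$ is an equivalence, so in particular it is exact. Composing with the (always exact) stalk functor $(\underline{\ \ })_p\colon \OO\text{-Mod}\to \OO_p\text{-Mod}$, which restricts to $M\mapsto M\otimes_A\OO_p$ on the image of $\pi^{\ast}$ (by Theorem \ref{qc} together with the description of $\pi^{\ast}$), we conclude that $M\mapsto M\otimes_A\OO_p$ is exact, i.e.\ $A\to\OO_p$ is flat.

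For (2), injectivity of $A\to\prod_p\OO_p$ follows because $A=\Gamma(X,\OO)$ embeds into the product of stalks of $\OO$. Flatness is clear: $X$ is finite, so $\prod_p\OO_p$ is a \emph{finite} product of flat $A$-modules, hence flat. For faithful flatness, suppose $N$ is an $A$-module with $N\otimes_A\prod_p\OO_p=0$; since the product is finite, this tensor product equals $\prod_p(N\otimes_A\OO_p)=\prod_p(\pi^{\ast}N)_p$, so every stalk of $\pi^{\ast}N$ vanishes, hence $\pi^{\ast}N=0$, and the equivalence from affineness forces $N=0$.

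For (3), the equivalence $\Gamma\colon \text{Qcoh}(X)\to A\text{-Mod}$ provided by affineness is exact in both directions (since its inverse $\pi^{\ast}$ is exact by (1) and $\Gamma$ is exact on quasi-coherent modules because $X$ is Serre-affine by Theorem \ref{AffineFinSp}). An exact equivalence of abelian categories induces an equivalence of unbounded derived categories, giving $D(\text{Qcoh}(X))\xrightarrow{\sim} D(A)$. It remains to show that the natural functor $\alpha\colon D(\text{Qcoh}(X))\to D_{\mathrm{qc}}(X)$ is an equivalence; composing its quasi-inverse with $\RR\Gamma$ will then give the equivalence $D_{\mathrm{qc}}(X)\simeq D(A)$, because for a complex of quasi-coherent modules one can compute $\RR\Gamma$ termwise (quasi-coherent modules being $\Gamma$-acyclic) and on a quasi-coherent module this just gives $\Gamma$.

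The heart of the argument, and the main obstacle, is the equivalence $\alpha$. The finite-dimensionality of $X$ is essential here: because $\dim X<\infty$, the standard resolution $C^{\bullet}$ has bounded length, so cohomology has bounded cohomological dimension on all sheaves, which lets one handle unbounded complexes by a Bökstedt--Neeman-style truncation argument. Concretely, fully faithfulness of $\alpha$ reduces to showing that for $\M,\Nc\in\text{Qcoh}(X)$ the groups $\mathrm{Ext}^i_{\text{Qcoh}(X)}(\M,\Nc)$ agree with $\mathrm{Ext}^i_{\OO\text{-Mod}}(\M,\Nc)$, which follows because via $\Gamma$ they both identify with $\mathrm{Ext}^i_A(\Gamma\M,\Gamma\Nc)$ (the first directly from the equivalence of abelian categories, the second via the acyclicity of quasi-coherents and Proposition \ref{qc-resolution}, which gives a functorial quasi-coherent flasque resolution). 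Essential surjectivity: given $\K\in D_{\mathrm{qc}}(X)$, its cohomology sheaves $H^i\K$ are quasi-coherent; by bounded cohomological dimension one can truncate and, within the subcategory of quasi-coherent modules (which is abelian and has enough objects of the form $\pi^{\ast}(\text{free})$), build a Cartan--Eilenberg replacement by quasi-coherents that is quasi-isomorphic to $\K$.
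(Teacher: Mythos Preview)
Your arguments for (1) and (2) are correct and essentially equivalent to the paper's; for (2) you use the criterion ``$N\otimes_A(\prod_p\OO_p)=0\Rightarrow N=0$'' while the paper checks surjectivity of $\Spec(\prod_p\OO_p)\to\Spec A$ via residue fields, but both routes are short and valid.

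For (3) your route diverges from the paper's and has two weak points. First, the justification you give for $\mathrm{Ext}^i_{\OO\text{-Mod}}(\M,\Nc)\simeq\mathrm{Ext}^i_A(\Gamma\M,\Gamma\Nc)$ is not right: Proposition~\ref{qc-resolution} does \emph{not} say that $C^\bullet\M$ is a resolution by \emph{quasi-coherent} sheaves (the $C^n\M$ are flasque but typically not quasi-coherent). The identification can be salvaged by instead resolving $\M=\pi^*M$ by $\pi^*$ of a free $A$-resolution of $M$, using the exactness of $\pi^*$ from (1). Second, your essential surjectivity sketch (``build a Cartan--Eilenberg replacement by quasi-coherents'') is vague; making this rigorous for unbounded complexes, even with bounded cohomological dimension, takes some work.

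The paper avoids all of this by attacking $D_{\mathrm{qc}}(X)\to D(A)$ directly rather than factoring through $D(\mathrm{Qcoh}(X))\to D_{\mathrm{qc}}(X)$. Namely, one shows that for any $\M^\bullet\in D_{\mathrm{qc}}(X)$ the counit $\pi^*\RR\pi_*\M^\bullet\to\M^\bullet$ is a quasi-isomorphism. Since each $\HH^i(\M^\bullet)$ is quasi-coherent and $X$ is Serre-affine, the hypercohomology spectral sequence degenerates to give $H^j(\RR\pi_*\M^\bullet)=\pi_*\HH^j(\M^\bullet)$; then exactness of $\pi^*$ from (1) yields $\HH^j(\pi^*\RR\pi_*\M^\bullet)=\pi^*\pi_*\HH^j(\M^\bullet)\simeq\HH^j(\M^\bullet)$ by affineness. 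This single computation simultaneously gives full faithfulness and essential surjectivity, and the bounded cohomological dimension (needed for convergence in the unbounded case) enters only implicitly. Your detour through $\alpha\colon D(\mathrm{Qcoh}(X))\to D_{\mathrm{qc}}(X)$ is a legitimate strategy, but here it buys nothing and costs you the two gaps above.
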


\begin{proof} (1) Lets us see that $\pi^*$ is exact. It suffices to see that it is left exact. Let $M\to N$ be a injective morphism of $A$-modules and let $\K$ be the kernel of $\pi^*M\to\pi^*N$. Since $X$ is affine, $\pi_*\pi^*=\Id$; hence $\pi_*\K=0$ and then $\K=0$ because $\K$ is quasi-coherent and $X$ is affine.

(2) Since $A\to\OO_p$ is flat, it remains to prove that $\Spec( \underset{p\in X}\prod \OO_p)\to \Spec A$ is surjective. Let $\pp$
be a prime ideal of $A$ and $k(\pp)$ its residue field. Since $X$ is affine, $\pi^*k(\pp)$ is a (non-zero) quasi-coherent module on $X$, hence there exists $p\in X$ such that $(\pi^*k(\pp))_{\vert U_p}$ is not zero. This means that $\OO_p\otimes_A k(\pp)$ is not zero, so the fiber of $\pp$ under the morphism $\Spec\OO_p\to \Spec A$ is not empty.

(3) $\pi_*\colon {\rm Qcoh}(X)\to \{ A-{\rm modules}\}$ is exact because $X$ is affine (hence Serre-affine), and $\pi^*\colon \{ A-{\rm modules}\}\to {\rm Qcoh}(X) $ is exact by (1). Since $X$ is affine, one concludes that $$\pi_*\colon D({\rm\text{\rm Qcoh}}(X))\to D(A)$$ is an equivalence (with inverse $\pi^*$). To conclude, it suffices to see that if $\M^\punto$ is a complex of $\OO$-modules with quasi-coherent cohomology, the natural morphism $ \pi^*\RR\pi_*\M^\punto \to \M^\punto$ is a quasi-isomorphism. Since $\HH^i(\M^\punto)$ are quasi-coherent and $X$ is affine, one has $H^j(X,\HH^i(\M^\punto))=0$ for any $j>0$. Then $H^j(X,\M^\punto)=H^0(X,\HH^j(\M^\punto))$; in other words, $H^j(\RR\pi_*\M^\punto)=\pi_*\HH^j(\M^\punto)$. Then
$$\HH^j(\pi^*\RR\pi_*\M^\punto)\overset{(1)}= \pi^* H^j(\RR\pi_*\M^\punto)=\pi^*\pi_*\HH^j(\M^\punto)$$ and $\pi^*\pi_*\HH^j(\M^\punto)\to \HH^j(\M^\punto)$ is an isomorphism because $\HH^j(\M^\punto)$ is quasi-coherent and $X$ is affine.
\end{proof}



\begin{cor}\label{integral-functors} {\rm (1)} Let $f\colon X\to Y$ be a morphism between finite spaces. If $\M^\punto$ is a complex of $\OO_Y$-modules with quasi-coherent cohomology, then $\LL f^*\M^\punto$ is a complex of $\OO_X$-modules with quasi-coherent cohomology. Hence one has a functor
\[\LL f^*\colon D_{\rm\text{\rm qc}}(Y)\to D_{\rm\text{\rm qc}}(X)\]

{\rm (2)} Let $\M^\punto,\Nc^\punto$ be two complexes of $\OO$-modules on a finite space $X$. If $\M$ and $\Nc$ have quasi-coherent cohomology, so does $\M^\punto\overset\LL\otimes\Nc^\punto$. So one has a functor
\[\overset\LL\otimes \colon D_{\rm\text{\rm qc}}(X)\times D_{\rm\text{\rm qc}}(X)\to D_{\rm\text{\rm qc}}(X)\]

{\rm (3)} Let $X$ and $Y$ be two finite spaces and let $\K\in D_{\rm qc}(X\times Y)$. Then, the integral functor $\Phi_\K\colon D(X)\to D(Y)$ maps $D_{\rm qc}(X)$ into $D_{\rm qc}(Y)$.
\end{cor}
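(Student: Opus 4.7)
The plan is to reduce parts (1) and (2) to local affine questions by restricting to the minimal neighborhoods $U_p$, where the derived category of quasi-coherent modules is equivalent to $D(\OO_p)$; then (3) will follow by combining (1), (2), Theorem~\ref{qc-of-proj} and a spectral sequence argument, made possible by the finite cohomological dimension of $\RR\pi'_*$ on a finite space.

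For (1), quasi-coherence is a local property, so it suffices to check that $(\LL f^*\M^\punto)_{\vert U_p}\in D_{\rm qc}(U_p)$ for every $p\in X$. Continuity of $f$ forces $f(U_p)\subseteq U_{f(p)}$, so $f$ restricts to a morphism $f_p\colon U_p\to U_{f(p)}$ between affine finite spaces (Proposition~\ref{U_p-is-affine}). Proposition~\ref{derivedcat-affine}(3) identifies $D_{\rm qc}(U_p)\simeq D(\OO_p)$ and $D_{\rm qc}(U_{f(p)})\simeq D(\OO_{f(p)})$. For a quasi-coherent module $\widetilde N=\pi^*N$ on $U_{f(p)}$ one has $f_p^*\widetilde N=\widetilde{N\otimes_{\OO_{f(p)}}\OO_p}$, and by Proposition~\ref{derivedcat-affine}(1) the functor $\pi^*$ is exact, so free $\OO_{f(p)}$-resolutions of $N$ are sent to flat quasi-coherent resolutions of $\widetilde N$. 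Consequently, under the equivalences, $\LL f_p^*$ corresponds to the derived extension of scalars $-\overset\LL\otimes_{\OO_{f(p)}}\OO_p\colon D(\OO_{f(p)})\to D(\OO_p)$, which manifestly lands in $D(\OO_p)\simeq D_{\rm qc}(U_p)$.

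Part (2) follows by the same local reduction: on each $U_p$ the complexes $\M^\punto_{\vert U_p}$ and $\Nc^\punto_{\vert U_p}$ correspond, via Proposition~\ref{derivedcat-affine}(3), to complexes of $\OO_p$-modules, and computing $\overset\LL\otimes_\OO$ on $U_p$ by a free (hence flat quasi-coherent) resolution of one factor identifies it with $\overset\LL\otimes_{\OO_p}$ on $D(\OO_p)$. For (3), part (1) gives $\LL\pi^*\M^\punto\in D_{\rm qc}(X\times Y)$, and then part (2) gives $\K\overset\LL\otimes\LL\pi^*\M^\punto\in D_{\rm qc}(X\times Y)$. It remains to show that $\RR\pi'_*$ carries $D_{\rm qc}(X\times Y)$ into $D_{\rm qc}(Y)$. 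The standard flasque resolution $C^\punto$ has length $\leq\dim(X\times Y)$, so $\RR\pi'_*$ has finite cohomological dimension, and the hypercohomology spectral sequence
\[E_2^{p,q}=R^p\pi'_*H^q(\Nc^\punto)\Longrightarrow H^{p+q}(\RR\pi'_*\Nc^\punto)\]
converges strongly, being concentrated in the horizontal strip $0\leq p\leq\dim(X\times Y)$. By Theorem~\ref{qc-of-proj} each $E_2^{p,q}$ is quasi-coherent, and since quasi-coherent $\OO_Y$-modules form an abelian subcategory of all $\OO_Y$-modules, so does the abutment.

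The main obstacle I expect is handling (3) for unbounded $\Nc^\punto$, since the naive hypercohomology spectral sequence is only well-behaved for complexes bounded on one side. The remedy is once more the finite cohomological dimension $\dim(X\times Y)$: replacing $\Nc^\punto$ termwise by $C^\punto(\Nc^\punto)$ produces a bicomplex of bounded horizontal extent whose total complex computes $\RR\pi'_*\Nc^\punto$, and this is what yields the strong convergence used above. The rest of the argument is purely formal once (1), (2) and Theorem~\ref{qc-of-proj} are in hand.
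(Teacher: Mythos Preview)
Your proof is correct and follows essentially the same approach as the paper's: both reduce (1) and (2) to the affine case via Proposition~\ref{derivedcat-affine}, identifying $\LL f^*$ and $\overset\LL\otimes$ with their module-theoretic counterparts over $\OO_p$. For (3) the paper simply writes ``It follows from (1), (2) and Theorem~\ref{qc-of-proj}'', whereas you spell out the passage from Theorem~\ref{qc-of-proj} (single sheaves) to complexes via the hypercohomology spectral sequence and the finite length of the standard resolution; this is exactly the step the paper leaves implicit.
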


\begin{proof} (1) Since being quasi-coherent is a local question, we may assume that $Y$ is affine. Let us denote $\pi\colon Y\to (*,A)$ the natural morphism, with $A=\OO_Y(Y)$. By Proposition \ref{derivedcat-affine},  $\M^\punto \simeq \pi^* M^\punto$ for some complex of $A$-modules $M^\punto$, and then $\LL f^* \M^\punto \simeq \LL\pi_X^* M^\punto$, with $\pi_X=\pi\circ f$; it is clear that $\LL\pi_X^* M^\punto$ has quasi-coherent cohomology (in fact, it is a complex of quasi-coherent modules).

(2) Again, we may assume that $X$ is affine, and then $\M^\punto\simeq \pi^*M^\punto$, $\Nc^\punto\simeq \pi^*N^\punto$. Then
\[ \M^\punto\overset\LL\otimes\Nc^\punto \simeq  \pi^*M^\punto\overset\LL\otimes \pi^*N^\punto \simeq \pi^*( M^\punto\overset\LL\otimes N^\punto).\]

(3) It follows from (1), (2) and Theorem \ref{qc-of-proj}.
\end{proof}

\section{Schematic finite spaces}\label{Section-Schematic}

Let $X$ be a finite space, $\delta\colon X\to X\times_k X$ the diagonal morphism (we shall see below that $k$ is irrelevant).

\begin{defn} We say that a finite space $X$ is  {\it schematic} if  $R^i\delta_*\OO$ is quasi-coherent for any $i\geq 0$ (for the sake of brevity, we shall say that $\RR\delta_*\OO$ is quasi-coherent). We say that $X$ is {\it semi-separated} if $\delta_*\OO$ is quasi-coherent and $R^i\delta_*\OO=0$ for $i>0$.
\end{defn}

For any $p,q\in X$, let us denote $U_{pq}=U_p\cap U_q$, and $\OO_{pq}=\OO (U_{pq})$. One has natural morphisms $\OO_p\to \OO_{pq}$ and $\OO_q\to \OO_{pq}$. Notice that

\begin{equation} \label{delta-qc} (\delta_*\OO)_{(p,q)}=\OO_{pq},\quad (R^i\delta_*\OO)_{(p,q)}= H^i(U_{pq},\OO)\end{equation}

Then one has:

\begin{prop}\label{schematic-cohomological} A finite space $(X,\OO)$ is schematic if and only if for any $p,q\in X$,  any $p'\geq p$  and any $i\geq 0$, the natural morphism
\[ H^i(U_{pq},\OO)\otimes_{\OO_p}\OO_{p'}\to H^i(U_{p'q},\OO) \] is an isomorphism. $X$ is semi-separated if an only if for any $p,q\in X$ and  any $p'\geq p$   the natural morphism \[ \OO_{pq}\otimes_{\OO_p}\OO_{p'}\to \OO_{p'q}  \] is an isomorphism and $U_{pq}$ is acyclic.

In particular, being schematic (or semi-separated) does not depend on $k$.
\end{prop}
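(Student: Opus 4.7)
The plan is to reduce the quasi-coherence of $\RR\delta_*\OO$ to a stalk-by-stalk condition via Theorem \ref{qc}, and then to exploit the factorization of inequalities in the product poset $X\times X$ together with the symmetry of the diagonal. First, recall that the structure sheaf of $X\times_kX$ has stalks $\OO_{(p,q)}=\OO_p\otimes_k\OO_q$ and that (\ref{delta-qc}) gives $(R^i\delta_*\OO)_{(p,q)}=H^i(U_{pq},\OO)$; the action of $\OO_p\otimes_k\OO_q$ on this stalk factors through the canonical ring homomorphism $\OO_p\otimes_k\OO_q\to\OO_{pq}$. By Theorem \ref{qc}, the quasi-coherence of $R^i\delta_*\OO$ is then equivalent to the base-change map
\[ H^i(U_{pq},\OO)\otimes_{\OO_p\otimes_k\OO_q}(\OO_{p'}\otimes_k\OO_{q'})\longrightarrow H^i(U_{p'q'},\OO) \]
being an isomorphism for every pair $(p,q)\leq(p',q')$ in $X\times X$ and every $i\geq 0$.

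Next, I would decompose an arbitrary inequality $(p,q)\leq(p',q')$ as $(p,q)\leq(p',q)\leq(p',q')$, so that it is enough to verify the above condition in the two elementary cases in which only one coordinate is enlarged. Using the canonical isomorphism $\OO_{p'}\otimes_k\OO_q\cong (\OO_p\otimes_k\OO_q)\otimes_{\OO_p}\OO_{p'}$, the first case simplifies to requiring $H^i(U_{pq},\OO)\otimes_{\OO_p}\OO_{p'}\to H^i(U_{p'q},\OO)$ to be an isomorphism, and the second case symmetrically to $H^i(U_{pq},\OO)\otimes_{\OO_q}\OO_{q'}\to H^i(U_{pq'},\OO)$. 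Since $U_{pq}=U_{qp}$, the two conditions coincide up to renaming of the variables, so imposing only the first for all $p\leq p'$ and all $q$ already yields the full quasi-coherence of $\RR\delta_*\OO$.

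For the semi-separated case I would argue in two stages: the vanishing $R^i\delta_*\OO=0$ for $i>0$ is equivalent, at each stalk $(p,q)$, to $H^i(U_{pq},\OO)=0$, i.e.\ to the acyclicity of $U_{pq}$; granted this, the quasi-coherence of $\delta_*\OO$ becomes just the $i=0$ instance of the previous isomorphism. The $k$-independence is then an immediate observation, since the reformulated conditions only involve the rings $\OO_p$, $\OO_{pq}$ and the cohomology of $\OO$ on the open subsets $U_{pq}\subseteq X$, none of which remembers $k$. The main technical point to verify carefully is the identification of the base-change tensor product $\otimes_{\OO_p\otimes_k\OO_q}(\OO_{p'}\otimes_k\OO_q)$ with the simpler $\otimes_{\OO_p}\OO_{p'}$, but this is a standard consequence of the identity $A'\otimes_kB\cong (A\otimes_kB)\otimes_AA'$ applied with $A=\OO_p$, $A'=\OO_{p'}$, $B=\OO_q$, together with the observation that the $\OO_p\otimes_k\OO_q$-structure on $H^i(U_{pq},\OO)$ factors through $\OO_{pq}$.
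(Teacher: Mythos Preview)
Your proposal is correct and follows the same approach as the paper, which in fact offers no explicit proof: the proposition is stated immediately after the stalk formula (\ref{delta-qc}) as a direct consequence of it together with the quasi-coherence criterion of Theorem \ref{qc}. Your write-up simply makes explicit the two routine steps the paper leaves to the reader---the reduction of the tensor product over $\OO_p\otimes_k\OO_q$ to one over $\OO_p$, and the factorization of $(p,q)\leq(p',q')$ through $(p',q)$ combined with the symmetry $U_{pq}=U_{qp}$.
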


\begin{ejems}
\begin{enumerate} \item If $X$ is the finite space associated to a quasi-compact and quasi-separated scheme $S$ and a locally affine covering $\U$, then $X$ is schematic. It is a consequence of the following fact: if $U$ is an affine scheme and $V\subset U$ is a quasi-compact open subset, then for any affine open subset $U'\subset U$, the natural morphism
\[ H^i(V,\OO_S )\otimes_{\OO_S (U)}\OO_S (U')\to H^i(V\cap U',\OO_S)\] is an isomorphism.

\item A finite topological space $X$ (i.e. $\OO=\ZZ$) is schematic if and only if each connected component is irreducible.
\end{enumerate}
\end{ejems}

It is clear that any open subset of a schematic space is also schematic. We shall see that the converse is also true, i.e., being schematic is a local question. Any semi-separated space is obviously schematic. Moreover, we shall see that schematic spaces are locally semi-separated.

\begin{thm}\label{extension} {\rm (of extension)} Let $X$ be a schematic finite space. For any open subset $j\colon U\hookrightarrow X$ and any quasi-coherent module  $\Nc$ on $U$, $\RR j_*\Nc$ is quasi-coherent. In particular, any quasi-coherent module on $U$ is the restriction to $U$ of a quasi-coherent module on  $X$.
\end{thm}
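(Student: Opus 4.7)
The strategy is to factor the inclusion $j\colon U\hookrightarrow X$ through its graph and apply Theorems \ref{graph} and \ref{qc-of-proj}. Write $\Gamma\colon U\to U\times X$ for the graph of $j$ (so $\Gamma(u)=(u,u)$) and let $\pi\colon U\times X\to U$, $\pi_X\colon U\times X\to X$ be the two projections. Then $j=\pi_X\circ\Gamma$ and therefore $\RR j_*\Nc=\RR\pi_{X\,*}\RR\Gamma_*\Nc$, so it suffices to track quasi-coherence through these two pushforwards.

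The key step, and the only place where the schematic hypothesis enters, is to show that $\RR\Gamma_*\OO_U$ is quasi-coherent on $U\times X$. View $U\times X$ as an open subset of $X\times X$; one checks directly from the definitions that $\delta_X^{-1}(U\times X)=U$ and that $\delta_X$ restricts to $\Gamma$ under this identification. Hence for every $i\geq 0$ one has
\[ R^i\Gamma_*\OO_U=(R^i\delta_{X\,*}\OO_X)_{\vert U\times X}, \]
and the right-hand side is quasi-coherent because $X$ is schematic and the restriction of a quasi-coherent module to an open subset remains quasi-coherent.

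Now Theorem \ref{graph} applied to $j$ gives a natural isomorphism $\LL\pi^*\Nc\overset\LL\otimes\RR\Gamma_*\OO_U\simeq \RR\Gamma_*\Nc$ in the derived category. By Corollary \ref{integral-functors}, $\LL\pi^*\Nc$ has quasi-coherent cohomology, and so does its derived tensor product with the quasi-coherent complex $\RR\Gamma_*\OO_U$; thus $\RR\Gamma_*\Nc\in D_{\rm qc}(U\times X)$. Applying $\RR\pi_{X\,*}$ and using the Grothendieck spectral sequence $E_2^{p,q}=R^p\pi_{X\,*}(R^q\Gamma_*\Nc)\Rightarrow R^{p+q}j_*\Nc$, together with Theorem \ref{qc-of-proj} and the fact that quasi-coherent modules form an abelian subcategory, shows that $\RR j_*\Nc$ has quasi-coherent cohomology, as required. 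The final clause of the statement then follows by setting $\M:=j_*\Nc$, which is quasi-coherent by the case $i=0$, and observing that $j^*\M=\Nc$ since $j$ is an open immersion.

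The conceptually decisive step is the identification $R^i\Gamma_*\OO_U=(R^i\delta_{X\,*}\OO_X)_{\vert U\times X}$, which is what ties the extension property directly to the schematic hypothesis. The rest is routine derived-category manipulation; the only bookkeeping subtlety is the passage from Theorem \ref{qc-of-proj} (stated for a single quasi-coherent sheaf) to a complex with quasi-coherent cohomology, and this is handled by the spectral sequence above combined with the abelianness of the quasi-coherent category on a finite space.
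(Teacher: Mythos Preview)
Your proof is correct and follows essentially the same route as the paper's: factor $j$ through its graph into $U\times X$, identify $\RR\Gamma_*\OO_U$ with the restriction of $\RR\delta_*\OO_X$ (this is where the schematic hypothesis is used), invoke Theorem~\ref{graph} to deduce quasi-coherence of $\RR\Gamma_*\Nc$, and finish with Theorem~\ref{qc-of-proj}. The paper's argument is more terse---it does not spell out the appeal to Corollary~\ref{integral-functors} for the derived tensor product, nor the spectral-sequence bookkeeping needed to pass from Theorem~\ref{qc-of-proj} (stated for a single sheaf) to a complex with quasi-coherent cohomology---but your added detail is correct and arguably clarifies those steps.
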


\begin{proof} Let $\delta_U\colon U\to U\times X$ be the graph of $j\colon U\hookrightarrow X$ and $\pi_U$, $\pi_X$ the projections of $ U\times X$ onto $U$ and $X$. By Theorem \ref{graph}, one has an isomorphism $\LL \pi_U^*\Nc \overset\LL \otimes \RR{\delta_U}_*\OO_{\vert U} \overset\sim\to \RR{\delta_U}_*\Nc$. On the other hand, $\RR{\delta_U}_*\OO_{\vert U} = (\RR {\delta}_*\OO)_{\vert U\times X}$, which is quasi-coherent because $X$ is schematic. Hence, $\RR{\delta_U}_*\Nc$ is quasi-coherent. Since $\RR j_*\Nc = \RR {\pi_X}_*\RR{\delta_U}_*\Nc$, we conclude by Theorem \ref{qc-of-proj}.
\end{proof}

\begin{rem} The converse of the theorem also holds; even more: if $\RR j_*\OO_{\vert U}$ is quasi-coherent for any open subset $j\colon U\hookrightarrow X$, then $X$ is schematic.
\end{rem}

\begin{prop}\label{delta-preserves-qc} Let $X$ be a schematic finite space. For any quasi-coherent module   $\M$ on $X$, $\RR\delta_*\M$ is quasi-coherent.
\end{prop}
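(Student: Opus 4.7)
The plan is to reduce the statement to the case $\M = \OO_X$ (which is quasi-coherent by the schematic hypothesis) by applying Theorem \ref{graph} to the identity morphism $\mathrm{Id}\colon X\to X$. The graph of the identity is precisely the diagonal $\delta\colon X\to X\times X$, and one of the two projections $\pi\colon X\times X\to X$ plays the role of the projection in that theorem. Theorem \ref{graph} then yields a natural isomorphism
\[ \LL\pi^*\M \overset{\LL}{\otimes} \RR\delta_*\OO_X \;\overset{\sim}{\longrightarrow}\; \RR\delta_*\M \]
in the derived category of $X\times X$.

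Now I would check that the left-hand side lies in $D_{\mathrm{qc}}(X\times X)$, using the tools assembled in Corollary \ref{integral-functors}. The complex $\LL\pi^*\M$ has quasi-coherent cohomology by part (1) of that corollary (applied to the projection $\pi$, which is a morphism between finite spaces), while $\RR\delta_*\OO_X$ has quasi-coherent cohomology by the very hypothesis that $X$ is schematic. Their derived tensor product therefore has quasi-coherent cohomology by part (2) of Corollary \ref{integral-functors}.

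Transporting the conclusion through the isomorphism above, $\RR\delta_*\M \in D_{\mathrm{qc}}(X\times X)$, which is exactly the statement that $R^i\delta_*\M$ is a quasi-coherent $\OO_{X\times X}$-module for every $i\geq 0$.

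The argument is essentially a one-line application of Theorem \ref{graph} combined with the closure properties of $D_{\mathrm{qc}}$ under $\LL\pi^*$ and $\overset{\LL}{\otimes}$, so there is no real obstacle; the only subtlety worth mentioning is that we do need the \emph{derived} projection formula of Theorem \ref{graph} rather than an underived statement, because $\RR\delta_*\OO_X$ need not be concentrated in degree zero (it is so exactly when $X$ is semi-separated).
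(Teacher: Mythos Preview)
Your proof is correct and follows essentially the same route as the paper's own argument: apply Theorem \ref{graph} to the identity (whose graph is $\delta$) to get $\LL\pi^*\M\overset{\LL}{\otimes}\RR\delta_*\OO_X\simeq\RR\delta_*\M$, then conclude from the quasi-coherence of $\RR\delta_*\OO_X$. The only difference is that the paper leaves implicit what you spell out via Corollary \ref{integral-functors}, namely that $\LL\pi^*$ and $\overset{\LL}{\otimes}$ preserve $D_{\mathrm{qc}}$.
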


\begin{proof}  By Theorem \ref{graph}, one has an isomorphism $\LL\pi_X^*\M\overset\LL \otimes \RR\delta_*\OO \simeq \RR\delta_*\M$. One concludes by the quasi-coherence of $\RR\delta_*\OO$.
\end{proof}

\begin{thm}\label{diagonal-semi-sep} Let $X$ be a semi-separated finite space. For any quasi-coherent module   $\M$ on $X$ and any $p,q\in X$, the natural morphism
\[ \M_p\otimes_{\OO_p}\OO_{pq}\to\M_{pq}\qquad (\M_{pq}=\Gamma(U_{pq},\M))\] is an isomorphism. That is, the natural morphism
\[ \pi ^*\M\otimes \delta_*\OO \to \delta_*\M\] is an isomorphism, where $\pi \colon X\times X\to X$ is any of the natural projections.  Moreover, $R^i\delta_*\M=0$ for any $i>0$. \end{thm}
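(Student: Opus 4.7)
My plan is to reduce the statement to a direct application of Theorem \ref{aciclico} (on acyclic open subsets of a finite space), using Proposition \ref{schematic-cohomological} as the bridge.

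First I would translate the statement into stalkwise conditions. Since $\pi\colon X\times X\to X$ is (say) the first projection, and $\OO_{(p,q)}=\OO_p\otimes_k\OO_q$, the stalk of $\pi^*\M\otimes_\OO\delta_*\OO$ at $(p,q)$ is
\[
(\M_p\otimes_k\OO_q)\otimes_{\OO_p\otimes_k\OO_q}\OO_{pq}\;=\;\M_p\otimes_{\OO_p}\OO_{pq},
\]
and $(\delta_*\M)_{(p,q)}=\M_{pq}$. Moreover $(R^i\delta_*\M)_{(p,q)}=H^i(U_{pq},\M)$ by the formula (\ref{delta-qc}). So the whole theorem amounts to proving that, for every $p,q\in X$ and every quasi-coherent $\M$ on $X$, the canonical map $\M_p\otimes_{\OO_p}\OO_{pq}\to \M_{pq}$ is an isomorphism and $H^i(U_{pq},\M)=0$ for $i>0$.

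Next, I would invoke the semi-separated hypothesis via Proposition \ref{schematic-cohomological}: it tells us that $U_{pq}$ is acyclic (i.e.\ $H^i(U_{pq},\OO)=0$ for $i>0$). Since $U_{pq}\subseteq U_p$, I am in the setting of Theorem \ref{aciclico}, applied to the acyclic open subset $U=U_{pq}$ of $U_p$ and the quasi-coherent $\OO_{|U_p}$-module $\M_{|U_p}$. Part (2) of that theorem yields exactly the two required assertions: $H^i(U_{pq},\M)=0$ for $i>0$, and $\M_p\otimes_{\OO_p}\OO_{pq}\overset{\sim}{\to}\M_{pq}$. This establishes both the vanishing $R^i\delta_*\M=0$ for $i>0$ and the isomorphism $\pi^*\M\otimes\delta_*\OO\overset{\sim}{\to}\delta_*\M$ stalkwise, and hence globally.

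There is really no serious obstacle: the substance of the argument has already been packaged in Theorem \ref{aciclico}, and the semi-separatedness is precisely the input that guarantees its hypothesis holds for all double intersections $U_{pq}$. The only thing to be mildly careful about is the identification of stalks through the isomorphism $\OO_{(p,q)}\cong \OO_p\otimes_k\OO_q$ and the fact that the argument is symmetric in $p$ and $q$, so either of the two natural projections $\pi\colon X\times X\to X$ produces the same statement; this confirms the parenthetical remark that the choice of $\pi$ is immaterial.
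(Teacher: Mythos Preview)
Your argument is correct. It differs from the paper's proof in the key lemma invoked: the paper applies Theorem \ref{graph} (with $f=\Id$, so $\Gamma=\delta$) to obtain the derived isomorphism $\LL\pi^*\M\overset{\LL}{\otimes}\RR\delta_*\OO\simeq\RR\delta_*\M$, and then simply observes that the semi-separated hypothesis $\RR\delta_*\OO=\delta_*\OO$ collapses this to the non-derived statement. You instead bypass the derived-category formulation entirely, reducing stalkwise to Theorem \ref{aciclico} via the acyclicity of $U_{pq}$ supplied by Proposition \ref{schematic-cohomological}. Both routes ultimately rest on the same computation (flatness of the standard resolution over $\OO_p$ and Proposition \ref{qc-resolution}), but yours is more elementary and self-contained for this particular statement, while the paper's approach highlights that Theorem \ref{diagonal-semi-sep} is a formal specialization of the general graph formula used throughout Sections \ref{Section-Schematic} and \ref{Section-SchematicMorphisms}.
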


\begin{proof} By Theorem \ref{graph}, one has an isomorphism $\LL\pi^*\M\overset\LL \otimes \RR\delta_*\OO \simeq \RR\delta_*\M$. Now the result follows from the hypothesis $\RR\delta_*\OO =\delta_*\OO$.
\end{proof}

\begin{prop}\label{schematic-product} If $X$ and $Y$ are schematic (resp. semi-separated), then $X\times_k Y$ is schematic (resp. semiseparated).
\end{prop}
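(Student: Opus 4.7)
The key identification is that under the shuffle isomorphism $(X\times Y)\times(X\times Y)\cong (X\times X)\times(Y\times Y)$ permuting the middle two factors, the diagonal $\delta_{X\times Y}$ corresponds to the product morphism $\delta_X\times\delta_Y$. Hence one has to analyze $\RR(\delta_X\times\delta_Y)_*\OO_{X\times Y}$.

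For the schematic case, Proposition~\ref{delta-preserves-qc} applied to $X$ and $Y$ shows that both $\RR\delta_{X,*}$ and $\RR\delta_{Y,*}$ preserve quasi-coherence, and then Proposition~\ref{preservacion-cuasi} yields that $\RR(\delta_X\times\delta_Y)_*$ preserves quasi-coherence. Applying this to the quasi-coherent sheaf $\OO_{X\times Y}$ shows that $\RR\delta_{X\times Y,*}\OO_{X\times Y}$ is quasi-coherent, so $X\times Y$ is schematic.

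For the semi-separated case, this argument still gives quasi-coherence of each $R^i\delta_{X\times Y,*}\OO_{X\times Y}$, but one must additionally show these higher direct images vanish for $i>0$. Using $U_{(p,r)(q,s)}=U_{pq}\times U_{rs}$, this reduces via Proposition~\ref{schematic-cohomological} to proving $H^i(U_{pq}\times U_{rs},\OO)=0$ for $i>0$ and all $p,q\in X$, $r,s\in Y$. I would factor this via the first projection $\pi\colon U_{pq}\times U_{rs}\to U_{pq}$ and a double application of Leray. At a point $p''\in U_{pq}$, since $U_{p''}$ has a minimum, the stalk $(R^i\pi_*\OO)_{p''}=H^i(U_{p''}\times U_{rs},\OO)$ collapses to $H^i(U_{rs},\pi'_*\OO)$ with $\pi'\colon U_{p''}\times U_{rs}\to U_{rs}$; here $\pi'_*\OO$ is the quasi-coherent sheaf on $Y$ with stalk $\OO_{p''}\otimes_k\OO_y$ at $y$ (quasi-coherence uses that $Y$ is a finite space). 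By semi-separation of $Y$, $U_{rs}\subset U_r$ is acyclic, so Theorem~\ref{aciclico} gives $H^i(U_{rs},\pi'_*\OO)=0$ for $i>0$ and $H^0=\OO_{p''}\otimes_k\OO_{rs}$. Hence $R^i\pi_*\OO=0$ for $i>0$ and $\pi_*\OO$ is the quasi-coherent sheaf on $U_{pq}$ with stalks $\OO_{p''}\otimes_k\OO_{rs}$. A second application of Theorem~\ref{aciclico}, now using semi-separation of $X$ to get $U_{pq}\subset U_p$ acyclic, yields $H^i(U_{pq},\pi_*\OO)=0$ for $i>0$, which finishes the computation by Leray.

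The delicate point is the semi-separated case: Propositions~\ref{delta-preserves-qc} and~\ref{preservacion-cuasi} deliver quasi-coherence without control on higher cohomology, so the vanishing must be obtained by the iterated Leray argument combined with Theorem~\ref{aciclico}. The technical subtlety is that Theorem~\ref{aciclico} requires a quasi-coherent sheaf on the ambient $U_p$ (resp.\ $U_r$), not merely on the smaller $U_{pq}$ (resp.\ $U_{rs}$); this is why the flatness of restrictions in a finite space is essential, as it allows the intermediate sheaves to be recognized as restrictions of quasi-coherent sheaves on the larger opens.
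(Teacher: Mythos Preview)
Your argument for the schematic case is correct and is exactly the paper's approach: the paper identifies $\delta_{X\times Y}$ (after the shuffle) with the composition $(1\times\delta_Y)\circ(\delta_X\times 1)$ and invokes Propositions~\ref{delta-preserves-qc} and~\ref{preservacion-cuasi}; your use of the ``consequence'' clause of Proposition~\ref{preservacion-cuasi} for $\delta_X\times\delta_Y$ is literally the same factorization packaged in one step.

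For the semi-separated case the paper gives no proof (it is left to the reader), and your argument via the stalkwise description $U_{(p,r)(q,s)}=U_{pq}\times U_{rs}$, iterated Leray, and Theorem~\ref{aciclico} is correct. A shorter route, more in the spirit of the paper's schematic proof, is available: the computation in the proof of Proposition~\ref{preservacion-cuasi} actually shows
\[
[R^i(f\times 1)_*\M]_{(y,s)}=[R^if_*(\pi_*\M)]_y,
\]
so if $R^if_*$ vanishes on quasi-coherent modules for $i>0$ then so does $R^i(f\times 1)_*$. By Theorem~\ref{diagonal-semi-sep}, $R^i\delta_{X,*}$ and $R^i\delta_{Y,*}$ vanish on quasi-coherent modules for $i>0$; hence the same holds for $\delta_X\times 1$ and $1\times\delta_Y$, and the Grothendieck spectral sequence for the composition $(1\times\delta_Y)\circ(\delta_X\times 1)$ degenerates, giving $R^i\delta_{X\times Y,*}\OO=0$ for $i>0$ directly. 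Your hands-on computation reaches the same conclusion but reproves from scratch what is already implicit in Proposition~\ref{preservacion-cuasi}.

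One minor remark on your technical note: the quasi-coherence of the intermediate sheaves (with stalks $\OO_{p''}\otimes_k\OO_y$ and $\OO_{p''}\otimes_k\OO_{rs}$) on the larger opens $U_r$ and $U_p$ follows from the tensor identity $(\OO_{p''}\otimes_k\OO_y)\otimes_{\OO_y}\OO_{y'}\cong\OO_{p''}\otimes_k\OO_{y'}$ alone; flatness of restrictions is not needed at that particular step (it is of course needed elsewhere, e.g.\ for Theorem~\ref{aciclico} itself).
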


\begin{proof} We prove the schematic case and leave the semi-separated case to the reader (which will not be used in the sequel). The diagonal morphism $X\times_k Y\to (X\times_k Y)\times_k (X\times_k Y)=(X\times_k X)\times_k (Y\times_k Y)$ is the composition of the morphisms $\delta_X\times 1\colon X\times_k Y \to (X\times_k X)\times_kY$ and $1\times \delta_Y\colon (X\times_k X)\times_k Y\to (X\times_k X)\times_k (Y\times_k Y)$. One concludes by Propositions \ref{delta-preserves-qc} and \ref{preservacion-cuasi}.
\end{proof}

\begin{prop} A finite space $X$ is schematic if and only if $U_p$ is schematic  for any $p\in X$.
\end{prop}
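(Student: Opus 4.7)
I plan to prove the two directions separately.

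The forward direction is immediate from a remark already made in the excerpt: any open subset of a schematic finite space is schematic, because the diagonal of $U$ is the restriction of $\delta$ to $U\times U$ and quasi-coherence is preserved by restriction to opens. Applying this to each $U_p\subseteq X$ gives that $U_p$ is schematic.

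For the converse, assume every $U_p$ is schematic. I want to show that each $R^i\delta_*\OO$ is quasi-coherent on $X\times X$. This is a local question, so it suffices to check quasi-coherence on each minimal open $U_p\times U_q$. Observe that $\delta^{-1}(U_p\times U_q)=U_{pq}$, and for $(s,t)$ with $s\geq p$, $t\geq q$ one has $U_s\subseteq U_p$ and $U_t\subseteq U_q$, so $U_s\cap U_t = U_{st}$; therefore
\[
(R^i\delta_*\OO)_{(s,t)} = H^i(U_{st},\OO).
\]
Splitting $(s,t)\leq (s',t')$ as $(s,t)\leq (s',t)\leq (s',t')$ and using $\OO_{(s,t)}=\OO_s\otimes_k\OO_t$, the quasi-coherence condition on $U_p\times U_q$ reduces, via Theorem \ref{qc}, to checking that for each $t\geq q$ and each $s\leq s'$ in $U_p$ the natural map
\[
H^i(U_{st},\OO)\otimes_{\OO_s}\OO_{s'} \to H^i(U_{s't},\OO)
\]
is an isomorphism, together with the symmetric statement in the second coordinate.

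The key move is to interpret this map as a stalkwise condition on a sheaf on $U_p$. Fix $t\geq q$ and let $j_t\colon U_{pt}=U_p\cap U_t\hookrightarrow U_p$ be the open inclusion. Since $U_p$ is schematic by hypothesis, Theorem \ref{extension} implies that $R^i(j_t)_*\OO$ is quasi-coherent on $U_p$. For any $s\geq p$ we have $(R^i(j_t)_*\OO)_s = H^i(U_s\cap U_{pt},\OO) = H^i(U_{st},\OO)$, using $U_s\subseteq U_p$. The quasi-coherence of $R^i(j_t)_*\OO$ on $U_p$, again via Theorem \ref{qc}, is exactly the displayed isomorphism above. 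The symmetric isomorphism in $t$ follows analogously from the schematicity of $U_q$ and the inclusion $U_q\cap U_s\hookrightarrow U_q$.

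I do not foresee a serious obstacle; the only point to watch is the identification of the restriction $(R^i\delta_*\OO)|_{U_p\times U_q}$ with cohomology groups of intersections $U_{st}$ for $s\geq p$, $t\geq q$. Once this is in place, the problem splits into two one-variable questions, each handled by applying the extension theorem inside the schematic space $U_p$ (respectively $U_q$) to the open immersion $U_{pt}\hookrightarrow U_p$ (respectively $U_{qs}\hookrightarrow U_q$).
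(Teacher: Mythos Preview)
Your argument is correct, and the route differs from the paper's in a useful way. The paper restricts $\delta$ to $U_p\times U_q$ by factoring it as
\[
U_{pq}\xrightarrow{\ \delta'\ } U_{pq}\times U_{pq}\xrightarrow{\ i\times j\ } U_p\times U_q,
\]
then uses that $U_{pq}$ is schematic (so $\RR\delta'_*\OO$ is quasi-coherent) together with Proposition~\ref{schematic-product} (so $U_p\times U_q$ is schematic) and finally Theorem~\ref{extension} applied to the open immersion $i\times j$ of the product. Your proof instead unwinds the quasi-coherence criterion of Theorem~\ref{qc} at each stalk, separates the two variables via the factorisation $(s,t)\leq (s',t)\leq (s',t')$, and reduces everything to the quasi-coherence of $R^i(j_t)_*\OO$ on $U_p$ (respectively of $R^i(j_{s'})_*\OO$ on $U_q$), which is exactly Theorem~\ref{extension} applied inside a single factor. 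The upshot is that you never need to know that the product $U_p\times U_q$ is schematic, so your argument is slightly more self-contained; the paper's version is more functorial and avoids the explicit stalk bookkeeping. Both rest on the same essential input, namely the extension theorem.
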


\begin{proof} If $X$ is schematic, then $U_p$ is schematic. Conversely, assume that $U_p$ is schematic. We have to prove that $\RR\delta_*\OO$ is quasi-coherent. It suffices to see that $(\RR\delta_*\OO)_{\vert U_p\times U_q}$ is quasi-coherent for any $p,q\in X$. Let us denote $\delta'\colon U_{pq}\to U_{pq}\times U_{pq}$ the diagonal morphism of $U_{pq}$ and $i\colon U_{pq}\hookrightarrow U_p$, $j\colon U_{pq}\hookrightarrow U_q$ the inclusions. Then, $(\RR\delta_*\OO)_{\vert U_p\times U_q}= \RR (i\times j)_*\RR\delta'_*\OO_{\vert U_{pq}}$.
Now, $U_{pq}$ is schematic (because $U_p$ is schematic), so $\RR\delta'_*\OO_{\vert U_{pq}}$ is quasi-coherent. Then $\RR (i\times j)_*\RR\delta'_*\OO_{\vert U_{pq}}$ is quasi-coherent by Theorem \ref{extension} (notice that $U_p\times U_q$ is schematic by Proposition \ref{schematic-product}).
\end{proof}

\begin{prop}\label{schematic-affine} An affine finite space  is schematic if and only if it is semi-separated. Consequently, a finite space $X$ is schematic if and only if $U_p$ is semi-separated for any $p\in X$.
\end{prop}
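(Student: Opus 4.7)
The implication semi-separated $\Rightarrow$ schematic is immediate from the definitions, so the plan is to establish the converse: if $X$ is affine and schematic, then $X$ is semi-separated. The central idea is to exploit the Leray spectral sequence for the diagonal $\delta\colon X\to X\times X$, combined with the fact that $X\times X$ is itself an affine finite space, to force the vanishing of $R^i\delta_*\OO$ for $i>0$.

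Concretely, I would argue as follows. Since $X$ is affine, Corollary \ref{product-affine} implies that $X\times X$ is affine; in particular, by Theorem \ref{AffineFinSp} it is Serre-affine, so every quasi-coherent sheaf on $X\times X$ is acyclic. Because $X$ is schematic, each $R^q\delta_*\OO$ is quasi-coherent on $X\times X$ and therefore acyclic. The Leray spectral sequence $H^p(X\times X, R^q\delta_*\OO)\Rightarrow H^{p+q}(X,\OO)$ then degenerates at $E_2$, yielding $H^n(X,\OO)\cong \Gamma(X\times X, R^n\delta_*\OO)$ for every $n$. But $X$ is acyclic (being affine), so $H^n(X,\OO)=0$ for $n>0$, whence $\Gamma(X\times X, R^n\delta_*\OO)=0$ for $n>0$. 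Since $X\times X$ is affine, the global sections functor is an equivalence on quasi-coherent modules, so the vanishing of global sections forces $R^n\delta_*\OO=0$ for all $n>0$. Combined with the quasi-coherence of $\delta_*\OO$ coming from the schematic hypothesis, this shows that $X$ is semi-separated.

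For the consequence, observe that if $X$ is schematic then each $U_p$ is schematic as an open subset of a schematic space, and has $p$ as a minimum, hence is affine by Proposition \ref{U_p-is-affine}; by the first part, each $U_p$ is semi-separated. Conversely, if every $U_p$ is semi-separated then in particular every $U_p$ is schematic, and the preceding proposition (that being schematic is a local property) gives that $X$ is schematic. The only delicate point in the argument is the passage from the vanishing of global sections to the vanishing of the sheaf itself; this requires $X\times X$ to be affine in the strong sense, not merely acyclic or Serre-affine, which is exactly the content of Corollary \ref{product-affine} and is what makes the whole argument go through.
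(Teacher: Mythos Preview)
Your proof is correct and follows essentially the same approach as the paper: both arguments use that $X\times X$ is affine (Corollary \ref{product-affine}), that $R^i\delta_*\OO$ is quasi-coherent by the schematic hypothesis, and then the degeneration of the Leray spectral sequence together with the acyclicity of $X$ to conclude $\Gamma(X\times X,R^i\delta_*\OO)=0$, whence $R^i\delta_*\OO=0$ by affineness. Your write-up is slightly more explicit about the spectral sequence and also spells out the ``Consequently'' part (which the paper leaves to the reader), but the underlying argument is identical.
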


\begin{proof} Let $X$ be an affine and schematic finite space. We have to prove that $R^i\delta_*\OO=0$ for $i>0$. Since  $X\times X$ is affine (Corollary \ref{product-affine}) and $R^i\delta_*\OO$ is quasi-coherent, it suffices to see that  $H^0(X\times X,R^i\delta_*\OO)=0$, $i>0$. Now,  $H^j(X\times X,R^i\delta_*\OO)=0$ for any $j>0$ and any $i\geq 0$, because $R^i\delta_*\OO$ is quasi-coherent and $X\times X$ is affine. Then $H^0(X\times X,R^i\delta_*\OO)=H^i(X,\OO)=0$ for $i>0$, because $X$ is affine.
\end{proof}

\begin{cor}\label{afinsemiseparado} Let $X$ be a schematic and affine finite space. An open subset $U$ of $X$ is affine if and only if it is acyclic.
\end{cor}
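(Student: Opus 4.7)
The forward direction is free: by definition, an affine finite space is acyclic, so if $U$ is affine then in particular $H^i(U,\OO)=0$ for $i>0$. The substance is the converse. My plan is to combine the hypothesis that $X$ is schematic (which gives an extension theorem for quasi-coherent sheaves on $U$) with the hypothesis that $X$ is affine (which forces every quasi-coherent $\OO_X$-module to come from an $A$-module, with $A=\OO(X)$).

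Concretely, to show $U$ is affine, by Theorem \ref{AffineFinSp} it is enough to show that $U$ is acyclic (given) and quasi-affine, i.e.\ that every quasi-coherent $\OO_U$-module $\Nc$ is generated by its global sections. Given such an $\Nc$, let $j\colon U\hookrightarrow X$ be the inclusion. Since $X$ is schematic, Theorem \ref{extension} yields that $\M:=j_*\Nc$ is a quasi-coherent $\OO_X$-module, and because $j$ is an open immersion we have $\M_{|U}=\Nc$. Since $X$ is affine, Proposition \ref{derivedcat-affine} (or directly the definition) gives $\M\simeq \pi^*M$ with $M=\Gamma(X,\M)=\Gamma(U,\Nc)$ and $\pi\colon X\to(*,A)$. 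Therefore, for every $x\in U$,
\[
M\otimes_A\OO_x \;\xrightarrow{\ \sim\ }\; \M_x \;=\; \Nc_x.
\]

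Now set $B=\Gamma(U,\OO_U)$; the restriction map $A\to B$ turns $M$ into a $B$-module and the isomorphism above factors as
\[
M\otimes_A\OO_x \;\twoheadrightarrow\; M\otimes_B\OO_x \;\longrightarrow\; \Nc_x,
\]
so the second arrow, which is exactly the canonical map $\Gamma(U,\Nc)\otimes_B\OO_x\to\Nc_x$, is surjective. This shows $\Nc$ is generated by its global sections, hence $U$ is quasi-affine, and combined with the hypothesis that $U$ is acyclic, Theorem \ref{AffineFinSp} yields that $U$ is affine.

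The potential subtlety I would want to double-check carefully is just that $(j_*\Nc)_{|U}=\Nc$ as $\OO_U$-modules (standard for open immersions) and that $\Gamma(X,j_*\Nc)=\Gamma(U,\Nc)$; everything else is a direct application of theorems already proved, and no new cohomological estimate is needed. The main conceptual point—and the reason the statement holds—is that schematicness supplies the global extension $\M$, while affineness of $X$ forces $\M$ to be $\pi^*M$, automatically producing a surjection from a free module onto $\Nc$ after restriction to $U$.
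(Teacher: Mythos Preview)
Your proof is correct and follows essentially the same route as the paper: use Theorem~\ref{extension} to extend $\Nc$ to a quasi-coherent module on $X$, invoke affineness of $X$ to see it is generated by its global sections, and conclude via Theorem~\ref{AffineFinSp}. You spell out more explicitly the factorization through $B=\Gamma(U,\OO_U)$ that justifies why generation by global sections passes to $U$, which the paper leaves implicit.
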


\begin{proof} Any affine open subset is acyclic by definition. Conversely, if $U$ is acyclic, by Theorem \ref{AffineFinSp} it is enough to prove that any quasi-coherent module on  $U$ is generated by its global sections. This follows from the fact that any quasi-coherent module on $U$ extends to a quasi-coherent module on $X$ (by Theorem \ref{extension}) and $X$ is affine.
\end{proof}

\begin{cor}\label{localproperties-schematic} Let $X$ be an schematic finite space. Then
\begin{enumerate} \item For any $p\in X$ and any $q,q'\geq p$, the natural morphism $\OO_q\otimes_{\OO_p}\OO_{q'}\to \OO_{qq'}$ is an isomorphism and the natural morphism $\OO_{qq'}\to \underset{t\geq q,q'}\prod\OO_t$ is faithfully flat.
\item For any $p\leq q$ the natural morphism $\OO_q\otimes_{\OO_p}\OO_q\to\OO_q$ is an isomorphism. That is, the morphism of affine schemes $\Spec\OO_q\to\Spec \OO_p$ induced by $\OO_p\to\OO_q$ is a flat monomorphism.
    \end{enumerate}
\end{cor}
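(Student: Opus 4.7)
The plan is to localize everything to the open neighborhood $U_p$, which inherits the schematic property (open subsets of schematic spaces are schematic) and, since it has $p$ as a minimum, is affine by Proposition \ref{U_p-is-affine}. Proposition \ref{schematic-affine} then upgrades this to semi-separated. The cohomological criterion Proposition \ref{schematic-cohomological}, applied to $U_p$, is the single fact driving both assertions.

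For the first isomorphism of (1), I would apply that criterion with the roles of $(p,q,p')$ in the proposition played by $(p,q',q)$ (here $p$ is the minimum of $U_p$ and $p\leq q$ is automatic), giving an isomorphism $\OO_{pq'}\otimes_{\OO_p}\OO_q\overset{\sim}{\to}\OO_{qq'}$. Since $p\leq q'$ forces $U_{pq'}=U_{q'}$ and hence $\OO_{pq'}=\OO_{q'}$, this is exactly the desired $\OO_q\otimes_{\OO_p}\OO_{q'}\overset{\sim}{\to}\OO_{qq'}$.

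For the faithfully flat statement of (1), the idea is to verify that $U_{qq'}$ is itself affine and then invoke Proposition \ref{derivedcat-affine}(2). Indeed, the semi-separatedness of $U_p$ also forces $U_{qq'}$ to be acyclic (once more by Proposition \ref{schematic-cohomological}), so Corollary \ref{afinsemiseparado}, applied to the schematic and affine space $U_p$, says that this acyclic open subset is affine. The points of $U_{qq'}$ are precisely those $t$ with $t\geq q$ and $t\geq q'$, and $\OO(U_{qq'})=\OO_{qq'}$; Proposition \ref{derivedcat-affine}(2) then immediately produces the desired faithfully flat morphism $\OO_{qq'}\to\prod_{t\geq q,q'}\OO_t$.

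Part (2) is the specialization $q'=q$ of (1): then $U_{qq'}=U_q$ and $\OO_{qq'}=\OO_q$, so the first isomorphism reduces to $\OO_q\otimes_{\OO_p}\OO_q\overset{\sim}{\to}\OO_q$. Combined with the flatness of $\OO_p\to\OO_q$ built into the definition of a finite space, this is precisely the statement that $\Spec\OO_q\to\Spec\OO_p$ is flat and a monomorphism (the latter because its diagonal corresponds to the multiplication $\OO_q\otimes_{\OO_p}\OO_q\to\OO_q$, which we have just shown to be an isomorphism). The whole proof is essentially an assembly of previously established results; the only delicate point is the cosmetic simplification $\OO_{pq'}=\OO_{q'}$ that lets Proposition \ref{schematic-cohomological} deliver the tensor product over $\OO_p$ that we want rather than one over $\OO_{q'}$.
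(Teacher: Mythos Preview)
Your proof is correct and follows essentially the same route as the paper's: invoke Proposition \ref{schematic-cohomological} for the tensor isomorphism, use Proposition \ref{schematic-affine} to see that $U_p$ is semi-separated so that $U_{qq'}$ is acyclic, apply Corollary \ref{afinsemiseparado} to conclude $U_{qq'}$ is affine, and finish with Proposition \ref{derivedcat-affine}(2); part (2) is then the specialization $q'=q$. You have in fact spelled out the substitution in Proposition \ref{schematic-cohomological} (and the simplification $\OO_{pq'}=\OO_{q'}$) more explicitly than the paper does.
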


\begin{proof} (1) The isomorphism $\OO_q\otimes_{\OO_p}\OO_{q'}\to \OO_{qq'}$ is a consequence of Proposition \ref{schematic-cohomological}. Now, $U_p$ is semi-separated by Proposition \ref{schematic-affine}. Hence $U_{qq'}$ is acyclic, and then affine by Corollary \ref{afinsemiseparado}. By Proposition \ref{derivedcat-affine},  the morphism $\OO_{qq'}\to \underset{t\geq q,q'}\prod\OO_t$ is faithfully flat.

(2) follows from (1), taking $q=q'$.
\end{proof}

\begin{rem} It can be proved that a ringed finite space $(X,\OO)$ satisfying condition (1) of  Corollary \ref{localproperties-schematic} is a finite and schematic space. For a proof, see \cite{Sancho2}.
\end{rem}

\begin{prop}\label{local-structure-affine} Let $X$ be  an affine and schematic finite space, $A=\OO(X)$. For any $p,q\in X$, the natural morphism $\OO_p\otimes_A\OO_q\to\OO_{pq}$ is an isomorphism.
\end{prop}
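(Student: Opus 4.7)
The plan is to exhibit $\OO_{pq}$, for fixed $q$, as the stalk at $p$ of a suitable quasi-coherent sheaf on $X$, which (because $X$ is affine) is automatically of the form $\widetilde{M}$ for some $A$-module $M$; the two descriptions of the stalk then give the desired isomorphism.

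More precisely, let $j_q\colon U_q\hookrightarrow X$ be the inclusion and set $\Nc_q := {j_q}_*\OO_{\vert U_q}$. A direct computation of stalks yields
\[ (\Nc_q)_p = \Gamma(U_p,{j_q}_*\OO_{\vert U_q}) = \Gamma(U_p\cap U_q,\OO)=\OO_{pq}, \]
and taking global sections gives $\Gamma(X,\Nc_q)=\Gamma(U_q,\OO)=\OO_q$. The first nontrivial step is to see that $\Nc_q$ is quasi-coherent: since $X$ is schematic, Theorem \ref{extension} applies and $\RR{j_q}_*\OO_{\vert U_q}$ is quasi-coherent, hence so is its zeroth cohomology sheaf ${j_q}_*\OO_{\vert U_q}=\Nc_q$.

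Once quasi-coherence of $\Nc_q$ is established, affineness of $X$ takes over. Because $\pi^*\colon \{A\text{-modules}\}\to \text{Qcoh}(X)$ is an equivalence, $\Nc_q$ is canonically isomorphic to $\pi^*\Gamma(X,\Nc_q)=\pi^*\OO_q$, and so by the stalkwise description of $\pi^*$ we obtain
\[ (\Nc_q)_p = \OO_q\otimes_A \OO_p. \]
Comparing with the first computation of the stalk gives the natural isomorphism $\OO_p\otimes_A\OO_q\overset{\sim}\to \OO_{pq}$.

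The only potentially delicate point is the quasi-coherence of $\Nc_q$, and this is the place where the schematic hypothesis enters essentially; note that the semi-separatedness of $X$ (which follows from affine plus schematic via Proposition \ref{schematic-affine}) is not needed here, since we only use the underived ${j_q}_*$ rather than the whole $\RR{j_q}_*$. The affineness of $X$ is then used solely to recover $\Nc_q$ from its module of global sections.
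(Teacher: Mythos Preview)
Your proof is correct. Both arguments follow the same template---show that a sheaf whose stalks are $\OO_{pq}$ is quasi-coherent on an affine space, then recover it from its global sections---but they package it differently. The paper works symmetrically in $p$ and $q$: it considers the diagonal $\delta\colon X\to X\times_A X$, observes that $\delta_*\OO$ is quasi-coherent (because $X$ is schematic) and that $X\times_A X$ is affine with global sections $A\otimes_A A=A$ (Corollary~\ref{product-affine}), so the unit $\OO_{X\times_AX}\to\delta_*\OO$ is an isomorphism; taking stalks at $(p,q)$ gives the result. You instead fix $q$ and work on $X$ alone, using Theorem~\ref{extension} to get quasi-coherence of ${j_q}_*\OO_{\vert U_q}$ and then affineness of $X$ to identify it with $\widetilde{\OO_q}$. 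Your route is slightly more economical in that it avoids the product space and the appeal to Corollary~\ref{product-affine}; the paper's route has the advantage of making the symmetry in $p,q$ manifest and of identifying the statement as the single global assertion $\OO_{X\times_AX}\simeq\delta_*\OO$.
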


\begin{proof} Let $\delta\colon X\to X\times_A X$ be the diagonal morphism. It suffices to prove that the natural morphism $\OO_{X\times_AX}\to\delta_*\OO$ is an isomorphism. Since $X\times_A X$ is affine (by Corollary \ref{product-affine}) and $\delta_*\OO$ is quasi-coherent (because $X$ is schematic), it suffices to see that it is an isomorphism after taking global sections. By Corollary \ref{product-affine}, one has $\Gamma(X\times_AX,\OO_{X\times_AX})=A\otimes_A A=A$.
\end{proof}

For a deeper study of affine schematic spaces, see  \cite{Sancho2}.

\section{Schematic morphisms}\label{Section-SchematicMorphisms}

Let $f\colon X\to Y$ be a morphism and $\Gamma\colon X\to X\times Y$ its graph. For each  $x\in X$ and $y\in Y$ we shall denote
\[ U_{xy}:=U_x\cap f^{-1}(U_y)=\Gamma^{-1}(U_x\times U_y),\quad \OO_{xy}:=\Gamma(U_{xy},\OO_X)=(\Gamma_*\OO_X)_{(x,y)}.\]


\begin{defn} We say that a morphism $f\colon X\to Y$ is  {\it schematic} if   $\RR \Gamma_*\OO_X$ is quasi-coherent. This means that
 for any $(x,y)\leq (x',y')$ and any $i\geq 0$, the natural morphism
 \[ H^i(U_{xy},\OO_X)\otimes_{\OO_{(x,y)}}\OO_{(x',y')}\to H^i(U_{x'y'},\OO_X)\] is an isomorphism.
\end{defn}

\begin{defn} We say that $f\colon X\to Y$ is  {\it locally acyclic} if  $\Gamma_*\OO_X$ is quasi-coherent and $R^i\Gamma_*\OO_X=0$ for $i>0$. This last condition means that $U_{xy}$ is acyclic for any $x\in X$, $y\in Y$.
\end{defn}

Obviously, any locally acyclic morphism is schematic. Being schematic is local in $X$: $f\colon X\to Y$ is schematic if and only if $f_{\vert U_x}\colon U_x\to Y$ is schematic for any $x\in X$. If $f\colon X\to Y$ is schematic, then $f^{-1}(U_y)\to U_y$ is schematic for any  $y\in Y$; consequently, if $f\colon X\to Y$ is schematic, then $f\colon U_x\to U_{f(x)}$ is schematic for any $x\in X$. We shall see that the converse is also true if $Y$ is schematic.

\begin{ejems} \begin{enumerate} \item The identity $X\to X$ is schematic (resp. locally acyclic) if and only if $X$ is schematic (resp. semi-separated). A finite space $X$ is schematic (resp. semi-separated) if and only if for every open subset $U$, the inclusion $U\hookrightarrow X$ is an schematic (resp. locally acyclic) morphism.
\item If $Y$ is a punctual space, then any morphism $X\to Y$ is schematic and locally acyclic.
\item Let $f\colon S'\to S$ be a morphism between quasi-compact and quasi-separated schemes, and let $\U, \U'$ be locally affine coverings of $S$ and $S'$ such that $\U'$ is thinner than $f^{-1}(\U)$. Let $X'\to X$ the induced morphism  between the associated finite spaces. This morphism is schematic.
\end{enumerate}
\end{ejems}

In the topological case, one has the following result (whose proof is quite easy and it is omitted because it will not be used in the rest of the paper):

\begin{prop}  Let $f\colon X \to Y$ be a continuous map between finite topological spaces. The following conditions are equivalent.
\begin{enumerate}
\item $f$ is schematic.
\item $f$ is locally acyclic.
\item For any $x\in X$, $y\in Y$, $U_{xy}$ is  non-empty, connected and acyclic.
\end{enumerate}
Moreover, in any of these cases, $Y$ is irreducible (i.e., schematic)  and any generic point of $X$ maps to the generic point of $Y$.
If $X$ and $Y$ are irreducible, then $f$ is schematic if and only if the generic point of $X$ maps to the generic point of $Y$.
\end{prop}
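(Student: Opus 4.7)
The plan is to establish (3) $\Rightarrow$ (2) $\Rightarrow$ (1) $\Rightarrow$ (3), then the ``moreover'' statements and the final assertion. Throughout I exploit that $\OO = \ZZ$, so every stalk of $\OO_{X\times Y}$ is $\ZZ$, every tensor product $\otimes_\ZZ$ is trivial, and every ring endomorphism of $\ZZ$ is the identity. For (3) $\Rightarrow$ (2): each $\Gamma(U_{xy},\ZZ) \cong \ZZ$ (by non-emptiness and connectedness) with restrictions between these copies being identities, while $H^i(U_{xy},\ZZ) = 0$ for $i > 0$ by acyclicity; hence $\Gamma_*\OO_X$ is the constant sheaf $\ZZ$ on $X \times Y$ (in particular quasi-coherent) and $R^i\Gamma_*\OO_X = 0$. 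The step (2) $\Rightarrow$ (1) is definitional.

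For (1) $\Rightarrow$ (3), the pivotal observation is that by continuity of $f$ one has $f(U_{x_0}) \subseteq U_{f(x_0)}$ for every $x_0 \in X$, so $U_{x_0,f(x_0)} = U_{x_0}$ has the minimum $x_0$ and is therefore non-empty, connected and acyclic. Given $(x,y) \in X \times Y$ with $U_{xy}$ non-empty, I pick $x_0 \in U_{xy}$, so $(x,y) \leq (x_0,f(x_0))$; quasi-coherence of $R^i\Gamma_*\OO_X$ (tensor over $\ZZ$ being trivial) then yields $H^i(U_{xy},\ZZ) \overset\sim\to H^i(U_{x_0},\ZZ)$, which equals $\ZZ$ for $i=0$ and $0$ otherwise. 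Thus $U_{xy}$ is connected and acyclic whenever non-empty.

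The principal obstacle is the non-emptiness of every $U_{xy}$, which I would derive from the ``moreover''. For any maximal $x \in X$, $U_x = \{x\}$ forces $U_{x,y}$ to be either $\emptyset$ or $\{x\}$ according to whether $y \leq f(x)$; quasi-coherence of $\Gamma_*\OO_X$ along $(x,y) \leq (x,y')$ (with $y \leq y'$) then forces the implication $y \leq f(x) \Rightarrow y' \leq f(x)$. Hence $\overline{\{f(x)\}} = \{y' : y' \leq f(x)\}$ is both down- and up-closed, so it coincides with the connected component of $f(x)$ in $Y$, which therefore has $f(x)$ as its unique maximum (generic point). An analogous argument for non-maximal $x \in X$ shows each connected component of $Y$ is irreducible and that generic points of $X$ map to generic points of $Y$, establishing the ``moreover''. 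Finally, assuming $X$ and $Y$ are irreducible with generic points $\eta_X$ and $\eta_Y$, the equality $f(\eta_X) = \eta_Y$ implies that for any $(x,y)$ one has $\eta_X \in U_x$ and $y \leq \eta_Y = f(\eta_X)$, so $\eta_X \in U_{xy}$; thus every $U_{xy}$ is non-empty, which together with the preceding paragraph yields (3) and hence (1).
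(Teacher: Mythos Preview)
The paper omits the proof of this proposition, so there is no argument to compare against. Your overall strategy is the natural one, and the steps (3) $\Rightarrow$ (2) $\Rightarrow$ (1), the reduction ``$U_{xy}$ non-empty $\Rightarrow$ connected and acyclic'' via comparison with $U_{x_0,f(x_0)}=U_{x_0}$, and the final paragraph on the irreducible case are all correct.

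There is, however, a genuine gap in your derivation of the non-emptiness of every $U_{xy}$ from (1). Your argument for maximal $x$ correctly shows that $\overline{\{f(x)\}}$ is open and closed in $Y$, hence equals the connected component of $f(x)$; but this only treats components of $Y$ \emph{meeting} $f(X)$. The sentence ``an analogous argument for non-maximal $x$'' does not repair this: for non-maximal $x$ one has $U_x\neq\{x\}$, so the identification $\{y:U_{xy}\neq\emptyset\}=\overline{\{f(x)\}}$ no longer holds, and nothing is gained. More to the point, (1) $\Rightarrow$ (3) is simply false without a connectedness hypothesis on $Y$: take $X=\{\ast\}$, $Y=\{a,b\}$ discrete, $f(\ast)=a$. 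Then $X\times Y$ is discrete, so every sheaf on it is quasi-coherent and $f$ is schematic (indeed locally acyclic), yet $U_{\ast,b}=\emptyset$ and $Y$ is not irreducible.

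The proposition must therefore be read with the tacit hypothesis that $Y$ is connected (and $X$ non-empty); the parenthetical ``(i.e., schematic)'' is a hint, since for a \emph{connected} finite topological space schematic and irreducible coincide. Under that hypothesis your maximal-point argument already gives $Y=\overline{\{f(x)\}}$ irreducible with generic point $f(x)$, and then for arbitrary $(x',y)$ you choose any maximal $x\geq x'$ to obtain $x\in U_{x'y}$ --- exactly the mechanism of your last paragraph. So once you make the connectedness assumption explicit, drop the appeal to non-maximal $x$, and invoke your final-paragraph argument to close (1) $\Rightarrow$ (3), the proof is complete.
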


\begin{thm}\label{sch-preserv-qc} Let $f\colon X\to Y$ be a schematic morphism and  $\Gamma\colon X\to X\times Y$ its graph. For any quasi-coherent module  $\M$ on $X$, one has that $\RR\Gamma_*\M$ and $\RR f_*\M$ are quasi-coherent.
\end{thm}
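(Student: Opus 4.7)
The plan is to read everything off from Theorem \ref{graph} together with the tools in Corollary \ref{integral-functors}. Let $\pi\colon X\times Y\to X$ and $\pi_Y\colon X\times Y\to Y$ be the projections. First I would apply Theorem \ref{graph} to the graph $\Gamma$ of $f$, which gives a canonical isomorphism
\[
\RR\Gamma_*\M \;\simeq\; \LL\pi^*\M \overset{\LL}\otimes \RR\Gamma_*\OO_X
\]
in $D(X\times Y)$. The hypothesis that $f$ is schematic is precisely that $\RR\Gamma_*\OO_X$ lies in $D_{\rm qc}(X\times Y)$. Since $\M$ is quasi-coherent on $X$, Corollary \ref{integral-functors}(1) gives $\LL\pi^*\M\in D_{\rm qc}(X\times Y)$, and then Corollary \ref{integral-functors}(2) ensures that the derived tensor product above is again in $D_{\rm qc}(X\times Y)$. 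This shows that each $R^i\Gamma_*\M$ is quasi-coherent, which is the first half of the statement.

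For the second half, I would factor $f = \pi_Y\circ \Gamma$, so that $\RR f_*\M = \RR{\pi_Y}_*\RR\Gamma_*\M$. Having just proved that $\RR\Gamma_*\M\in D_{\rm qc}(X\times Y)$, I would conclude by showing that $\RR{\pi_Y}_*$ sends $D_{\rm qc}(X\times Y)$ to $D_{\rm qc}(Y)$. This is the derived-category upgrade of Theorem \ref{qc-of-proj}: using the hypercohomology spectral sequence
\[
E_2^{p,q} \;=\; R^p{\pi_Y}_*\bigl(R^q\Gamma_*\M\bigr) \;\Longrightarrow\; R^{p+q}f_*\M ,
\]
each $E_2^{p,q}$ is quasi-coherent by Theorem \ref{qc-of-proj} applied to the projection $\pi_Y$, and the spectral sequence is bounded because $X\times Y$ is a finite topological space (so cohomological dimensions are bounded by $\dim(X\times Y)$). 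Since quasi-coherent modules on a finite space form an abelian subcategory (we are in a finite space by assumption), subquotients and extensions of quasi-coherent modules are quasi-coherent, and the abutment $R^{p+q}f_*\M$ is therefore quasi-coherent.

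The only non-routine point is the passage from the single-sheaf statement of Theorem \ref{qc-of-proj} to complexes with quasi-coherent cohomology, but this is exactly what the bounded spectral sequence above handles. Everything else is a direct application of the graph formula and the fact, already in Corollary \ref{integral-functors}, that $\LL\pi^*$ and $\overset{\LL}\otimes$ preserve quasi-coherent cohomology on finite spaces.
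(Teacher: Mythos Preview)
Your argument is correct and follows the same route as the paper: apply Theorem \ref{graph} to identify $\RR\Gamma_*\M$ with $\LL\pi^*\M\overset{\LL}\otimes\RR\Gamma_*\OO_X$, use the schematic hypothesis plus Corollary \ref{integral-functors} to get quasi-coherence, then factor $f=\pi_Y\circ\Gamma$ and invoke Theorem \ref{qc-of-proj}. Your spectral-sequence justification for passing from single sheaves to complexes in $D_{\rm qc}$ is a detail the paper leaves implicit, so your write-up is in fact slightly more complete.
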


\begin{proof} By Theorem \ref{graph} one has an isomorphism $\LL\pi_X^*\M\overset\LL\otimes \RR\Gamma_*\OO_X\simeq \RR\Gamma_*\M$. Since $\RR\Gamma_*\OO_X$ is quasi-coherent, $\RR\Gamma_*\M$ is also quasi-coherent. Finally, if $\pi_Y\colon X\times Y\to Y$ is the natural projection, the isomorphism $\RR f_*\M \simeq \RR {\pi_Y}_*\RR \delta_*\M$ gives that $\RR f_*\M$ is quasi-coherent, by Theorem \ref{qc-of-proj}.
\end{proof}

\begin{thm}\label{sch-preserv-qc2} Let $X$ be a schematic finite space. A morphism $f\colon X\to Y$ is schematic if and only if $\RR f_*$ preserves quasi-coherence.
\end{thm}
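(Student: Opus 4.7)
The forward direction (schematic $\Rightarrow$ $\RR f_*$ preserves quasi-coherence) is precisely Theorem \ref{sch-preserv-qc}, and it does not require the hypothesis that $X$ be schematic. So the plan is to concentrate on the converse: assuming $X$ is schematic and $\RR f_*$ preserves quasi-coherence, I want to show $\RR\Gamma_*\OO_X$ is quasi-coherent, where $\Gamma\colon X\to X\times Y$ is the graph of $f$.

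The key geometric observation is that the graph factors as
\[ \Gamma\colon X \xrightarrow{\ \delta\ } X\times X \xrightarrow{\ 1\times f\ } X\times Y, \]
where $\delta$ is the diagonal of $X$. Passing to derived pushforwards (and using that composition of derived functors behaves well here because everything is bounded in finite-dimensional finite spaces) yields
\[ \RR\Gamma_*\OO_X \ \simeq\ \RR(1\times f)_*\,\RR\delta_*\OO_X. \]
Since $X$ is schematic, $\RR\delta_*\OO_X$ has quasi-coherent cohomology by definition. So the whole problem is reduced to checking that $\RR(1\times f)_*$ carries complexes with quasi-coherent cohomology on $X\times X$ to complexes with quasi-coherent cohomology on $X\times Y$.

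For this I would invoke Proposition \ref{preservacion-cuasi}: the hypothesis says $R^i f_*$ preserves quasi-coherence, and the proposition then gives that $R^i(1\times f)_*$ preserves quasi-coherence as well (applying the proposition to $f$ multiplied by the identity on the ``extra'' factor $X$, up to swapping the two factors). Then the Leray (or composition-of-derived-functors) spectral sequence
\[ E_2^{p,q}\ =\ R^p(1\times f)_*\bigl(R^q\delta_*\OO_X\bigr)\ \Longrightarrow\ R^{p+q}\Gamma_*\OO_X \]
has quasi-coherent $E_2$-terms. Because $X\times Y$ is a finite space (flat restrictions), quasi-coherent modules form an abelian subcategory of all $\OO_{X\times Y}$-modules that is closed under extensions, so every $R^n\Gamma_*\OO_X$, being an iterated extension of subquotients of the quasi-coherent $E_2^{p,q}$, is itself quasi-coherent.

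I expect the only delicate point to be verifying the spectral-sequence argument cleanly — convergence is not an issue since the dimensions involved are finite so the spectral sequence degenerates after finitely many pages, but one has to be careful that ``quasi-coherent'' is preserved under subquotients and extensions in this category. That is exactly what the flatness of the restriction maps buys us (the proposition right after the definition of finite space), so nothing further is needed.
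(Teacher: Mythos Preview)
Your proof is correct and follows essentially the same route as the paper: factor $\Gamma=(1\times f)\circ\delta$, use that $\RR\delta_*$ preserves quasi-coherence because $X$ is schematic (Proposition~\ref{delta-preserves-qc}), and that $\RR(1\times f)_*$ does so by Proposition~\ref{preservacion-cuasi}. The only difference is that you spell out the Leray spectral sequence step and the closure of quasi-coherent modules under subquotients and extensions, whereas the paper leaves this implicit in the phrase ``We are done''; your version is more explicit but not genuinely different.
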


\begin{proof}  The direct part is given by Theorem \ref{sch-preserv-qc}. For the converse, put the graph of $f$, $\Gamma\colon X\to X\times Y$, as the composition of the diagonal morphism $\delta\colon X\to X\times X$ and $1\times f\colon X\times X\to X\times Y$. Now, $\RR \delta_*$ preserves quasi-coherence because $X$ is schematic and $\RR (1\times f)_*$ preserves quasi-coherence by the hypothesis and Proposition \ref{preservacion-cuasi}. We are done.
\end{proof}

\begin{cor} If $f\colon X\to Y$ is a schematic morphism between schematic spaces, then the graph $\Gamma\colon X\to X\times Y$ is a schematic morphism. In particular, if $X$ is a schematic space, then the diagonal morphism $\delta\colon X\to X\times X$ is schematic.
\end{cor}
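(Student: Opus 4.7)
The plan is to observe that this corollary follows essentially immediately from the two preceding theorems, once we set things up correctly. Since both hypotheses ($X$ schematic, $f$ schematic) are in place, I would first use Theorem \ref{sch-preserv-qc} applied to $f$: this tells me that for every quasi-coherent module $\M$ on $X$, the complex $\RR\Gamma_*\M$ is quasi-coherent, i.e.\ $\RR\Gamma_*$ preserves quasi-coherence on $X$.

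Next, I would invoke Theorem \ref{sch-preserv-qc2}. Since $X$ is schematic by hypothesis, that theorem states the equivalence between being a schematic morphism out of $X$ and having $\RR\Gamma_*$ preserve quasi-coherence. Applying it to $\Gamma\colon X\to X\times Y$ (noting that $X\times Y$ is a legitimate target: it is itself schematic by Proposition \ref{schematic-product}, though this is not needed for the conclusion) yields that $\Gamma$ is schematic.

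For the second assertion, I would specialize the first part to $Y=X$ and $f=\Id_X$. To do so I must check that $\Id_X$ is a schematic morphism. But the graph of $\Id_X\colon X\to X$ is precisely the diagonal $\delta\colon X\to X\times X$, and $\RR\delta_*\OO_X$ is quasi-coherent by the very definition of $X$ being schematic. Hence $\Id_X$ is schematic, and the first part of the statement then gives that $\delta$ (the graph of $\Id_X$) is a schematic morphism.

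There is no real obstacle here; the only mild subtlety is recognizing that the identity map is schematic iff its domain is a schematic space, so that the ``in particular'' statement really is a special case of the first assertion rather than an independent argument.
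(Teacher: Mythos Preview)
Your proof is correct and follows exactly the same route as the paper: invoke Theorem \ref{sch-preserv-qc} to get that $\RR\Gamma_*$ preserves quasi-coherence, then Theorem \ref{sch-preserv-qc2} (using that $X$ is schematic) to conclude $\Gamma$ is schematic. Your unpacking of the ``in particular'' clause via $\Id_X$ being schematic is the intended reading and is a bit more explicit than the paper, which leaves this implicit.
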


\begin{proof}  If $f$ is schematic, then $\Gamma$ is schematic by Theorems \ref{sch-preserv-qc} and \ref{sch-preserv-qc2}.
\end{proof}

\begin{prop}\label{composition-good} The composition of schematic (resp. locally acyclic) morphisms is schematic (resp. locally acyclic).
\end{prop}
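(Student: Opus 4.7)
For the schematic case I would use the factorization
\[ \Gamma_{gf}=(1\times g)\circ\Gamma_f\colon X\longrightarrow X\times Y\longrightarrow X\times Z, \]
which yields $\RR\Gamma_{gf,*}\OO_X=\RR(1\times g)_*\,\RR\Gamma_{f,*}\OO_X$. Since $f$ is schematic, $\RR\Gamma_{f,*}\OO_X$ is quasi-coherent by definition; since $g$ is schematic, Theorem~\ref{sch-preserv-qc} says $\RR g_*$ preserves quasi-coherence, and then Proposition~\ref{preservacion-cuasi} upgrades this to $\RR(1\times g)_*$. Hence $\RR\Gamma_{gf,*}\OO_X$ is quasi-coherent and $gf$ is schematic.

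For the locally acyclic case the quasi-coherence of $\Gamma_{gf,*}\OO_X$ is already provided by the schematic argument, so I only need to show $R^i\Gamma_{gf,*}\OO_X=0$ for $i>0$, equivalently that $U_{xz}^{gf}=U_x\cap f^{-1}(g^{-1}(U_z))$ is acyclic for every $x\in X$, $z\in Z$. Writing $\bar f=f|_{U_x}\colon U_x\to Y$, note that $\bar f$ is locally acyclic, with $\bar f^{-1}(U_y)=U_{xy}^f$ acyclic; Leray for $\bar f$ together with $R^i\bar f_*\OO_{U_x}=0$ ($i>0$) gives
\[ H^i(U_{xz}^{gf},\OO_X)=H^i(V,\tilde G|_V), \qquad V:=g^{-1}(U_z), \]
where $\tilde G:=\bar f_*\OO_{U_x}$ is the quasi-coherent sheaf on $Y$ with stalks $\tilde G_y=\OO_{xy}^f$ (quasi-coherence inherited from that of $\Gamma_{f,*}\OO_X$). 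So the task is to show that $\tilde G|_V$ is acyclic on $V$.

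The key inputs are now: (i) $V=\bigcup_y U_{yz}^g$, (ii) each $U_{yz}^g=U_y\cap g^{-1}(U_z)\subseteq U_y$ is acyclic by local acyclicity of $g$, and (iii) Theorem~\ref{aciclico} applied inside $U_y$ gives that $\tilde G|_{U_{yz}^g}$ is acyclic with $\tilde G(U_{yz}^g)=\tilde G_y\otimes_{\OO_y}\OO_{yz}^g=\OO_{xy}^f\otimes_{\OO_y}\OO_{yz}^g$. To turn these local vanishings into a global one, I would apply Theorem~\ref{graph} to $g$, which gives $\RR\Gamma_{g,*}\tilde G\simeq\pi_Y^*\tilde G\otimes\Gamma_{g,*}\OO_Y$ concentrated in degree zero (using $\RR\Gamma_{g,*}\OO_Y=\Gamma_{g,*}\OO_Y$ for $g$ locally acyclic), and then compute the stalks of $R^ig_*\tilde G=R^i\pi_{Z,*}(\pi_Y^*\tilde G\otimes\Gamma_{g,*}\OO_Y)$ by projecting out the $U_z$-factor and reading off, at each $y$, the module $\OO_{xy}^f\otimes_{\OO_y}\OO_{yz}^g$ already identified in (iii). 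The stalkwise vanishing from Theorem~\ref{aciclico} forces $R^ig_*\tilde G=0$ for $i>0$; its stalk at $z$ is $H^i(V,\tilde G|_V)$, which therefore vanishes and closes the argument.

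The main obstacle is this last step: the obvious Čech cover $\{U_{yz}^g\}$ of $V$ is not a Leray cover for $\tilde G$, since the higher intersections $U_{y_1}\cap\cdots\cap U_{y_k}\cap g^{-1}(U_z)$ are generally not of the form $U_t$, so one cannot conclude directly from (ii)–(iii). The remedy is precisely the use of Theorem~\ref{graph} to replace $\RR\Gamma_{g,*}\tilde G$ by a tensor product whose stalks are individual $\OO_{xy}^f\otimes_{\OO_y}\OO_{yz'}^g$'s, each controlled by Theorem~\ref{aciclico} inside a single $U_y$, bypassing the need to analyse arbitrary intersections.
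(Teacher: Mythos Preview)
Your schematic case is correct and in fact slightly cleaner than the paper's: you factor $\Gamma_{gf}=(1\times g)\circ\Gamma_f$, while the paper factors through three maps,
\[
X\xrightarrow{\Gamma_f} X\times Y\xrightarrow{1\times\Gamma_g} X\times Y\times Z\xrightarrow{\pi} X\times Z,
\]
and then invokes Theorem~\ref{qc-of-proj} for the extra projection. Both routes appeal to Proposition~\ref{preservacion-cuasi} and Theorem~\ref{sch-preserv-qc}; yours simply avoids the detour through $X\times Y\times Z$.

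The locally acyclic case, however, has a genuine gap in your final step. After applying Theorem~\ref{graph} you correctly obtain $\RR\Gamma_{g,*}\tilde G=\Gamma_{g,*}\tilde G$, and after projecting out the $U_z$-factor you are left with computing $H^i(Y,\Nc)$ where $\Nc_y=\OO^f_{xy}\otimes_{\OO_y}\OO^g_{yz}$. But the ``stalkwise vanishing from Theorem~\ref{aciclico}'' you invoke is the vanishing of $H^i(U^g_{yz},\tilde G)$, which is exactly the statement $R^ij_*(\tilde G|_V)=0$ for $j\colon V\hookrightarrow Y$; combined with the identification $\Nc\simeq j_*(\tilde G|_V)$, this only gives you back $H^i(Y,\Nc)=H^i(V,\tilde G|_V)$, which is the quantity you were trying to compute. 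So Theorem~\ref{graph} here is circular rather than a remedy: you have reformulated $H^i(V,\tilde G|_V)$ without producing any new vanishing.

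The missing idea is that $\bar f$ factors through $U_{f(x)}$, so with $y_0:=f(x)$ one has $U^{gf}_{xz}=\bar f^{-1}\bigl(U^g_{y_0z}\bigr)$, not merely $\bar f^{-1}(V)$. Then Leray gives $H^i(U^{gf}_{xz},\OO_X)=H^i(U^g_{y_0z},\tilde G')$ with $\tilde G'$ the quasi-coherent sheaf $\bar f_*\OO_{U_x}$ on $U_{y_0}$. Since $U^g_{y_0z}\subset U_{y_0}$ is acyclic by local acyclicity of $g$, Theorem~\ref{aciclico} applies \emph{directly} to a single $U^g_{y_0z}$ and yields the vanishing. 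In other words, you only need one member of your cover $\{U^g_{yz}\}_y$, namely $y=f(x)$, not a global-to-local patching over all of $V$.
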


\begin{proof} We prove the schematic case and leave the locally acyclic case to the reader (since it will not be used in the sequel). Let $f\colon X\to Y$ and $g\colon Y\to Z$ be schematic morphisms,    $h\colon X\to Z$ its composition. The graph  $\Gamma_h\colon X\to X\times Z$ is the composition of the morphisms
\[ X\overset {\Gamma_f}\to X\times Y \overset{1\times\Gamma_g}\to X\times Y\times Z \overset\pi \to X\times Z\] where $\pi\colon X\times Y\times Z \to X\times Z$ is the natural projection. $\RR{\Gamma_f}_*$ and $\RR{\Gamma_g}_*$ preserve quasi-coherence because $f$ and $g$ are schematic,  $\RR(1\times\Gamma_g)_*$ preserves quasi-coherence by Proposition \ref{preservacion-cuasi} and $\RR\pi_*$ preserves quasi-coherence by Theorem \ref{qc-of-proj}. Hence $\RR{\Gamma_h}_*$ preserves quasi-coherence.
\end{proof}

\begin{prop} The product of schematic (resp. locally acyclic) morphisms is schematic (resp. locally acyclic). That is, if  $f\colon X\to Y$ and $f'\colon X'\to Y'$ are schematic (resp. locally acyclic) morphisms, then  $f\times f'\colon X\times X'\to Y\times Y'$ is schematic (resp. locally acyclic).
\end{prop}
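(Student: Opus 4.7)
The plan is to express the graph of $f\times f'$ as a product of the graphs of $f$ and $f'$ (up to an obvious reordering of factors), and then invoke Proposition \ref{preservacion-cuasi} together with Theorem \ref{sch-preserv-qc}. Let
$$\sigma\colon (X\times Y)\times (X'\times Y')\overset\sim\to (X\times X')\times (Y\times Y')$$
be the canonical isomorphism of ringed finite spaces swapping the two middle factors. A direct check on points and on stalks of the structure sheaves yields the factorization
$$\Gamma_{f\times f'} \;=\; \sigma\circ (\Gamma_f\times \Gamma_{f'}),$$
where $\Gamma_f\times \Gamma_{f'}\colon X\times X'\to (X\times Y)\times (X'\times Y')$ denotes the product morphism. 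This is the only non-trivial bookkeeping step.

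For the schematic case, Theorem \ref{sch-preserv-qc} gives that $\RR(\Gamma_f)_*$ and $\RR(\Gamma_{f'})_*$ preserve quasi-coherence, so by the consequence in Proposition \ref{preservacion-cuasi} the functor $\RR(\Gamma_f\times \Gamma_{f'})_*$ preserves quasi-coherence as well; composing with $\sigma_*$, an isomorphism of sheaf categories that trivially preserves quasi-coherence, we conclude that $\RR(\Gamma_{f\times f'})_*$ preserves quasi-coherence. Evaluating at $\OO_{X\times X'}$ shows that $\RR(\Gamma_{f\times f'})_*\OO_{X\times X'}$ is quasi-coherent, so $f\times f'$ is schematic.

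For the locally acyclic case, one additionally needs $R^i(\Gamma_{f\times f'})_*\OO_{X\times X'}=0$ for $i>0$. Because $\Gamma_{f\times f'}^{-1}(U_{(x,x')}\times U_{(y,y')})=U_{xy}\times U_{x'y'}$ and both $U_{xy}$ and $U_{x'y'}$ are acyclic by hypothesis, this reduces to the Künneth-type vanishing $H^i(U_{xy}\times U_{x'y'},\OO)=0$ for $i>0$. This is the main obstacle; I would handle it via two successive Leray spectral sequences. For the projection $\pi\colon U_{xy}\times U_{x'y'}\to U_{x'y'}$, the stalks $(R^j\pi_*\OO)_v=H^j(U_{xy}\times U_v,\OO)$ can be computed by a second Leray argument for $U_{xy}\times U_v\to U_{xy}$ (noting $U_v$ has minimum $v$, so the projection onto the first factor has vanishing higher direct images by Proposition \ref{aciclicity}). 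Theorem \ref{aciclico} then supplies the flatness and vanishing needed to force each spectral sequence to collapse into trivial cohomology, using the acyclicity of each of $U_{xy}$ and $U_{x'y'}$. As in Proposition \ref{composition-good}, this case is not used in what follows and can be relegated to a parenthetical remark.
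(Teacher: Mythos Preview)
Your proof is correct and follows essentially the same route as the paper. The paper writes the graph of $f\times f'$ as the composition $(1\times\Gamma_{f'})\circ(\Gamma_f\times 1)$ and then invokes Proposition \ref{preservacion-cuasi}; this composition is exactly $\Gamma_f\times\Gamma_{f'}$ landing in $(X\times Y)\times(X'\times Y')$, so your factorization $\Gamma_{f\times f'}=\sigma\circ(\Gamma_f\times\Gamma_{f'})$ is the same argument with the swap isomorphism made explicit and with the ``consequence'' clause of Proposition \ref{preservacion-cuasi} invoked directly rather than re-derived. For the locally acyclic case the paper leaves the details to the reader; your two-step Leray sketch via Theorem \ref{aciclico} is a valid way to fill that in (just note that the pushforwards $\pi'_*\OO$ and $\pi_*\OO$ you use are restrictions of quasi-coherent modules on $U_x$ and $U_{x'}$ respectively, which is what Theorem \ref{aciclico} requires).
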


\begin{proof} Again, we prove the schematic case and leave the locally acyclic case to the reader. The graph of $f\times f'$  is the composition of the   morphisms $\Gamma_f\times 1\colon X\times X'\to X\times Y\times X'$ and $1\times\Gamma_{f'}\colon X\times Y\times X'\to  X\times Y\times X'\times Y'$. One concludes by Proposition \ref{preservacion-cuasi}.
\end{proof}

\begin{prop}\label{loc-acyclic-affine} Let $X$ and $Y$ be two affine finite spaces. A morphism $f\colon X\to Y$ is schematic if and only if it is locally acyclic.
\end{prop}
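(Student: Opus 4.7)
One direction is immediate from the definitions: if $f$ is locally acyclic, then $\Gamma_*\OO_X$ is quasi-coherent and $R^i\Gamma_*\OO_X=0$ for $i>0$, so $\RR\Gamma_*\OO_X$ is quasi-coherent and $f$ is schematic.

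For the converse, the plan is to imitate verbatim the argument of Proposition \ref{schematic-affine} (``affine + schematic $\Rightarrow$ semi-separated''), replacing the diagonal $\delta\colon X\to X\times X$ by the graph $\Gamma\colon X\to X\times Y$. Assume $f$ is schematic, so $R^i\Gamma_*\OO_X$ is quasi-coherent on $X\times Y$ for every $i\geq 0$. Since $X$ and $Y$ are affine, Corollary \ref{product-affine} shows that $X\times Y$ is also affine, hence in particular Serre-affine; therefore every quasi-coherent module on $X\times Y$ is acyclic, and a quasi-coherent module on $X\times Y$ is zero if and only if its global sections vanish. So it will suffice to verify that $H^0(X\times Y,R^i\Gamma_*\OO_X)=0$ for all $i>0$.

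To compute these global sections, I would use the Leray spectral sequence for $\Gamma$:
\[
E_2^{j,i}=H^j(X\times Y,R^i\Gamma_*\OO_X)\Longrightarrow H^{i+j}(X,\OO_X).
\]
Because $R^i\Gamma_*\OO_X$ is quasi-coherent and $X\times Y$ is Serre-affine, $E_2^{j,i}=0$ for all $j>0$. Thus the spectral sequence degenerates at $E_2$ and gives $H^i(X,\OO_X)=H^0(X\times Y,R^i\Gamma_*\OO_X)$. Since $X$ is affine, the left-hand side vanishes for $i>0$, and we conclude $H^0(X\times Y,R^i\Gamma_*\OO_X)=0$, hence $R^i\Gamma_*\OO_X=0$ for $i>0$. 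Together with the quasi-coherence of $\Gamma_*\OO_X$ (which is the $i=0$ case of the schematic hypothesis), this shows $f$ is locally acyclic.

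There is really no hard step here; the only point to be careful about is applying Serre-affineness of $X\times Y$, which requires the product of two affine finite spaces to be affine, and this was already established in Corollary \ref{product-affine}.
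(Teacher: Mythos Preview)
Your proof is correct and matches the paper's approach exactly: the paper simply says the argument is ``completely analogous to the proof of Proposition~\ref{schematic-affine},'' and what you have written is precisely that analogy, with $\delta\colon X\to X\times X$ replaced by $\Gamma\colon X\to X\times Y$ and Corollary~\ref{product-affine} supplying the affineness of $X\times Y$. The degeneration of the Leray spectral sequence that you spell out is exactly the step the paper compresses into the line ``$H^0(X\times X,R^i\delta_*\OO)=H^i(X,\OO)$.''
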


\begin{proof} It is completely analogous to the proof of Proposition \ref{schematic-affine}.
\end{proof}

\begin{cor} Let $Y$ be a schematic finite space. A morphism $f\colon X\to Y$ is schematic if and only if, for any $x\in X$, the morphism $f\colon U_x\to U_{f(x)}$ is locally acyclic.
\end{cor}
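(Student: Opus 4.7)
The plan is to combine three earlier results: (i) the remark that being schematic is local in $X$, so it suffices to check that $f_{\vert U_x}\colon U_x\to Y$ is schematic for each $x\in X$; (ii) Proposition \ref{loc-acyclic-affine} (schematic $\Leftrightarrow$ locally acyclic for morphisms between affine finite spaces); and (iii) Proposition \ref{composition-good} (composition of schematic morphisms is schematic). The key observation that unlocks everything is that both $U_x$ and $U_{f(x)}$ have a minimum, hence are affine finite spaces by Proposition \ref{U_p-is-affine}.

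For the direct implication, assume $f\colon X\to Y$ is schematic. One of the remarks following the definition of schematic morphism states that then $f\colon U_x\to U_{f(x)}$ is schematic for every $x\in X$. Since $U_x$ and $U_{f(x)}$ are affine, Proposition \ref{loc-acyclic-affine} immediately upgrades this to locally acyclic.

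For the converse, assume that $f\colon U_x\to U_{f(x)}$ is locally acyclic for every $x$; in particular it is schematic. The restriction $f_{\vert U_x}\colon U_x\to Y$ factors as
\[
U_x \xrightarrow{\,f\,} U_{f(x)} \xhookrightarrow{\,j\,} Y.
\]
Because $Y$ is schematic, the open inclusion $j\colon U_{f(x)}\hookrightarrow Y$ is a schematic morphism (Example (1) after the definition of schematic morphism). Proposition \ref{composition-good} then gives that $f_{\vert U_x}\colon U_x\to Y$ is schematic. Since being schematic is local in $X$, we conclude that $f\colon X\to Y$ itself is schematic.

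There is no real obstacle here: the statement is essentially a clean repackaging of the local-on-$X$ property, the affineness of each $U_p$, and the equivalence (schematic $=$ locally acyclic) in the affine setting, all of which are already available. The only point requiring a moment of care is verifying that $f_{\vert U_x}$ indeed factors through $U_{f(x)}$, which follows from continuity (monotonicity) of $f$: if $p\geq x$ then $f(p)\geq f(x)$, i.e.\ $f(U_x)\subseteq U_{f(x)}$.
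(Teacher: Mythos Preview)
Your proof is correct and follows essentially the same route as the paper: for the forward direction you use that schematic restricts to $U_x\to U_{f(x)}$ and then invoke Proposition \ref{loc-acyclic-affine}; for the converse you factor $f_{\vert U_x}$ as $U_x\to U_{f(x)}\hookrightarrow Y$, use that the inclusion is schematic because $Y$ is, and apply Proposition \ref{composition-good} together with the local-in-$X$ nature of schematicness. The only additions are the explicit mention that $U_x$, $U_{f(x)}$ are affine and the check that $f(U_x)\subseteq U_{f(x)}$, both of which are fine.
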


\begin{proof} If $f\colon X\to Y$ is schematic, then $f\colon U_x\to U_{f(x)}$ is schematic, hence locally acyclic by Proposition \ref{loc-acyclic-affine}. Conversely, assume that $f\colon U_x\to U_{f(x)}$ is locally acyclic for any $x\in X$. Since $Y$ is schematic, $U_{f(x)}\hookrightarrow Y$ is schematic, so the composition  $U_x\to U_{f(x)}\hookrightarrow Y$ is schematic, by Proposition \ref{composition-good}. Hence $f$ is schematic.
\end{proof}

\begin{thm}\label{graphofschematic} Let $f\colon X\to Y$ be a locally acyclic morphism and  $\Gamma\colon X\to X\times Y$ its graph. For any quasi-coherent $\OO_X$-module  $\M$ and any $(x,y)$ in $X\times Y$, the natural morphism
\[ \M_x\otimes_{\OO_x}\OO_{xy}\to\M_{xy}\qquad (\M_{xy}=\Gamma(U_{xy},\M))\] is an isomorphism. In other words, the natural morphism
\[ \pi_X^*\M\otimes \Gamma_*\OO_X\to \Gamma_*\M\] is an  isomorphism, where $\pi_X\colon X\times Y\to X$ is the natural projection.  Moreover, $R^i\Gamma_*\M=0$ for $i>0$.
\end{thm}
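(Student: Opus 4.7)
My strategy is to reduce the theorem to a direct, stalkwise application of Theorem \ref{aciclico}. The key observation is that the open subset $U_{xy}=U_x\cap f^{-1}(U_y)$ sits inside $U_x$, and the hypothesis that $f$ is locally acyclic says exactly that $U_{xy}$ is acyclic for every pair $(x,y)\in X\times Y$, since $(R^i\Gamma_*\OO_X)_{(x,y)}=H^i(U_{xy},\OO_X)$ must vanish for $i>0$. Thus the data needed to feed Theorem \ref{aciclico} is automatically present.

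Fix $(x,y)\in X\times Y$. I apply Theorem \ref{aciclico} with $p=x$ (the minimum of $U_x$), with the acyclic open $U_{xy}\subseteq U_x$, and with the quasi-coherent $\OO_{U_x}$-module $\M|_{U_x}$. This yields at once:
\begin{enumerate}
\item[(i)] $H^i(U_{xy},\M)=0$ for every $i>0$;
\item[(ii)] the natural map $\M_x\otimes_{\OO_x}\OO_{xy}\to \M(U_{xy})=\M_{xy}$ is an isomorphism.
\end{enumerate}
Assertion (i) immediately gives $R^i\Gamma_*\M=0$ for $i>0$, since by definition $(R^i\Gamma_*\M)_{(x,y)}=H^i(\Gamma^{-1}(U_x\times U_y),\M)=H^i(U_{xy},\M)$. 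Assertion (ii) is already the explicit stalkwise isomorphism in the statement.

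It remains to translate (ii) into the intrinsic form $\pi_X^*\M\otimes \Gamma_*\OO_X\overset\sim\to \Gamma_*\M$. I compute the stalk of the source at $(x,y)$: recalling that $(\pi_X^*\M)_{(x,y)}=\M_x\otimes_{\OO_x}\OO_{(x,y)}=\M_x\otimes_k\OO_y$ as an $\OO_{(x,y)}=\OO_x\otimes_k\OO_y$-module, one obtains
\[ (\pi_X^*\M\otimes_{\OO_{X\times Y}}\Gamma_*\OO_X)_{(x,y)} = (\M_x\otimes_k\OO_y)\otimes_{\OO_x\otimes_k\OO_y}\OO_{xy}=\M_x\otimes_{\OO_x}\OO_{xy}, \]
which by (ii) equals $\M_{xy}=(\Gamma_*\M)_{(x,y)}$. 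So the morphism $\pi_X^*\M\otimes \Gamma_*\OO_X\to \Gamma_*\M$ is an isomorphism on stalks, hence an isomorphism of sheaves.

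I do not foresee any real obstacle: the whole argument is a direct appeal to Theorem \ref{aciclico}, whose hypotheses (an acyclic open inside some $U_p$ of a finite space, together with a quasi-coherent module) are built into our setting by the definition of local acyclicity. The only item requiring any care is the bookkeeping in the last paragraph, where one must correctly identify the $\OO_{X\times Y}$-stalk of $\pi_X^*\M\otimes\Gamma_*\OO_X$ with $\M_x\otimes_{\OO_x}\OO_{xy}$; this is a routine computation using $\OO_{(x,y)}=\OO_x\otimes_k\OO_y$ and the fact that $\OO_{xy}$ receives its $\OO_{(x,y)}$-action from both factors.
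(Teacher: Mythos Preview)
Your proof is correct. It differs from the paper's in that the paper invokes Theorem~\ref{graph} (the derived isomorphism $\LL\pi_X^*\M\overset\LL\otimes\RR\Gamma_*\OO_X\simeq\RR\Gamma_*\M$) and then collapses it using $\RR\Gamma_*\OO_X=\Gamma_*\OO_X$, whereas you bypass the derived formalism entirely and apply Theorem~\ref{aciclico} pointwise. Both arguments rest on the same underlying computation (Proposition~\ref{qc-resolution}), so the difference is one of packaging: the paper's route is a one-line specialization of a result it has already established, while yours is more self-contained and makes transparent that only the acyclicity of each $U_{xy}$ is really used---the quasi-coherence of $\Gamma_*\OO_X$ plays no role in either argument. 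Your final stalk computation identifying $(\pi_X^*\M\otimes\Gamma_*\OO_X)_{(x,y)}$ with $\M_x\otimes_{\OO_x}\OO_{xy}$ is fine.
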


\begin{proof} By Theorem \ref{graph}, one has an isomorphism $\LL\pi_X^*\M\overset\LL\otimes \RR\Gamma_*\OO_X\simeq \RR\Gamma_*\M$. One concludes by the hypothesis $\RR\Gamma_*\OO_X =\Gamma_*\OO_X$.
\end{proof}

\subsection{Affine schematic morphisms}

In this subsection all spaces and morphisms are assumed to be schematic.

\begin{defn} Let $f\colon X\to Y$ be a morphism.
\begin{enumerate}
\item We say that $f$ is quasi-affine if for any quasi-coherent module $\M$ on $X$, the natural morphism  $f^*f_*\M\to\M$ is surjective (i.e., every quasi-coherent  $\OO_X$-module is generated by its  sections over $Y$).
\item We say that $f$ is Serre-affine if $R^if_*\M=0$ for any $i>0$ and any quasi-coherent module $\M$ on $X$.
\item We say that   $f$ is affine  if    $f^{-1}(U_y)$ is affine for any $y\in Y$.
\end{enumerate}
\end{defn}

\begin{rem}\label{relative=absolute} If $Y$ is a punctual space, the relative notions coincide with the absolute ones, i.e.: $f$ is affine (resp. Serre-affine, quasi-affine) if and only if $X$ is affine (resp.  Serre-affine, quasi-affine).
\end{rem}

\begin{prop} A morphism $f\colon X\to Y$ is Serre-affine (resp., quasi-affine, affine) if and only if for any $y\in Y$ the morphism $f\colon f^{-1}(U_y)\to U_y$ is Serre-affine (resp. quasi-affine, affine).
\end{prop}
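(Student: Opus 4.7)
The plan is to treat the three properties in increasing order of difficulty: affine, Serre-affine, quasi-affine. Throughout, let $j\colon f^{-1}(U_y)\hookrightarrow X$ and $i\colon U_y\hookrightarrow Y$ be the open inclusions and denote $g=f\vert_{f^{-1}(U_y)}\colon f^{-1}(U_y)\to U_y$, so that $f\circ j=i\circ g$.

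The affine case is essentially formal. For any $y'\in U_y$ one has $g^{-1}(U_{y'})=f^{-1}(U_{y'})$ (since $U_{y'}\subseteq U_y$), so the condition ``$g^{-1}(U_{y'})$ affine for every $y'\in U_y$'' coincides with ``$f^{-1}(U_{y'})$ affine for every $y'\in U_y$''. Ranging over all $y\in Y$ we recover exactly the affineness of $f$.

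For the Serre-affine case, I would use that the stalk of $R^if_*\M$ at $y$ is $H^i(f^{-1}(U_y),\M)$, and similarly for $R^ig_*$. The ``if'' direction is immediate: given a quasi-coherent $\M$ on $X$, its restriction $\M_{\vert f^{-1}(U_y)}$ is quasi-coherent on $f^{-1}(U_y)$, so by hypothesis on $g$ we get $H^i(f^{-1}(U_y),\M)=0$ for $i>0$, proving $R^if_*\M=0$. For the ``only if'' direction, given a quasi-coherent $\Nc$ on $f^{-1}(U_y)$, I would extend it to the quasi-coherent $\OO_X$-module $j_*\Nc$ on $X$, which is quasi-coherent by the extension Theorem \ref{extension} (valid because $X$ is schematic). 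Then for any $y'\in U_y$,
\[H^i(f^{-1}(U_{y'}),\Nc)=H^i(f^{-1}(U_{y'}),j_*\Nc)=(R^if_*(j_*\Nc))_{y'}=0\]
for $i>0$ by the Serre-affineness of $f$, so $R^ig_*\Nc=0$.

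For the quasi-affine case, the compatibility $(f_*\M)_{\vert U_y}=g_*(\M_{\vert f^{-1}(U_y)})$ (which follows from $f^{-1}(V)=g^{-1}(V)$ for $V\subseteq U_y$) is the key computational fact. For the ``if'' direction, given a quasi-coherent $\M$ on $X$ and a point $x\in X$ with $y=f(x)$, apply the quasi-affineness of $g$ to $\Nc=\M_{\vert f^{-1}(U_y)}$: the resulting surjection $g^*g_*\Nc\to\Nc$ at the stalk at $x$ reads $(f_*\M)_y\otimes_{\OO_y}\OO_x\twoheadrightarrow\M_x$, which is exactly the surjectivity of $f^*f_*\M\to\M$ at $x$. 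For the ``only if'' direction, given quasi-coherent $\Nc$ on $f^{-1}(U_y)$, form the quasi-coherent extension $\M:=j_*\Nc$ on $X$ (again using Theorem \ref{extension}); applying the quasi-affineness of $f$ to $\M$ and restricting to $f^{-1}(U_y)$ via the exact functor $j^*$ yields the surjection $g^*g_*\Nc\to\Nc$, since $j^*\M=\Nc$ and $(f_*\M)_{\vert U_y}=g_*\Nc$. The main obstacle, if any, is the bookkeeping for the quasi-affine case, namely keeping the adjunction compatibilities between $j^*,j_*$ and $f^*,f_*,g^*,g_*$ straight; the extension theorem makes both nontrivial directions go through uniformly.
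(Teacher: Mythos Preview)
Your proof is correct and follows essentially the same approach as the paper: the paper's proof is a one-line remark that the only nontrivial part is the forward implication for Serre-affine and quasi-affine, and that this follows from the extension property of quasi-coherent modules (Theorem \ref{extension}). You have simply unpacked that remark in full detail, using $j_*\Nc$ as the extension and checking the relevant stalk/restriction compatibilities.
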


\begin{proof} The only difficulty is to prove the direct statement for Serre-affine and quasi-affine. But it is easy if one takes into account the extension property of quasi-coherent modules on schematic spaces (Theorem \ref{extension}).
\end{proof}

\begin{prop} If $f\colon X\to Y$ and $g\colon Y\to Z$ are  Serre-affine (resp. quasi-affine) then $g\circ f$   is Serre-affine (resp. quasi-affine).
\end{prop}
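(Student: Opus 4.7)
The plan is to deduce both statements from the Leray spectral sequence together with the quasi-coherence preservation that schematic morphisms enjoy (Theorem \ref{sch-preserv-qc}). The key observation, used in both halves, is that for any quasi-coherent module $\M$ on $X$, the direct image $f_*\M$ is again quasi-coherent, since $f$ is schematic, so that the hypotheses on $g$ can be legitimately applied to $f_*\M$.

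For the Serre-affine case, I would consider the Leray-type spectral sequence
\[ E_2^{i,j}=R^ig_*(R^jf_*\M)\Longrightarrow R^{i+j}(g\circ f)_*\M. \]
Since $f$ is Serre-affine, $R^jf_*\M=0$ for $j>0$, so the spectral sequence degenerates and yields $R^i(g\circ f)_*\M \simeq R^ig_*(f_*\M)$. Now $f_*\M$ is quasi-coherent by Theorem \ref{sch-preserv-qc}, and $g$ is Serre-affine, so $R^ig_*(f_*\M)=0$ for every $i>0$. This gives the Serre-affineness of $g\circ f$.

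For the quasi-affine case, I would chase the natural morphism $(g\circ f)^*(g\circ f)_*\M \to \M$, which factors as
\[ f^*g^*g_*f_*\M \longrightarrow f^*f_*\M \longrightarrow \M. \]
Again $f_*\M$ is quasi-coherent, so the hypothesis that $g$ is quasi-affine gives that $g^*g_*(f_*\M)\to f_*\M$ is surjective. Applying the right-exact functor $f^*$ preserves the surjection, producing the first arrow as an epimorphism. The second arrow is surjective because $f$ itself is quasi-affine. The composition is therefore surjective, proving that $g\circ f$ is quasi-affine.

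The only potentially delicate step is invoking that $f_*\M$ is quasi-coherent so that the hypothesis on $g$ applies; this is precisely what being schematic guarantees through Theorem \ref{sch-preserv-qc}, so no serious obstacle arises. The rest is a formal consequence of the degeneration of the composition spectral sequence and right-exactness of pullback.
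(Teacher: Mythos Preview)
Your argument is correct and follows essentially the same route as the paper: the paper also uses the Leray spectral sequence (phrased as $R^jg_*R^if_*\M=0$ for $i+j>0$) for the Serre-affine case and the identical factorization $h^*h_*\M=f^*g^*g_*f_*\M\to f^*f_*\M\to\M$ for the quasi-affine case. Your explicit invocation of Theorem~\ref{sch-preserv-qc} to justify that $f_*\M$ is quasi-coherent is a detail the paper leaves implicit (it is covered by the blanket assumption, stated at the start of the subsection, that all morphisms are schematic).
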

\begin{proof} Let us denote $h=g\circ f$. If $f$ and $g$ are Serre-affine, then $R^j g_*R^if_*\M=0$ for any $i+j>0$. Hence $R^ih_*\M=0$ for any $i>0$. If $f$ and $g$ are quasi-affine, then $f^*f_*\M\to \M$ and $g^*g_*(f_*\M)\to f_*\M$ are surjective; hence the composition
\[ h^*h_*\M= f^*g^*g_*f_*\M\to f^*f_*\M\to\M\] is also surjective.
\end{proof}

\begin{prop} A morphism $f\colon X\to Y$ is Serre-affine (resp., quasi-affine, affine) if and only if $f^{-1}(U_y)$ is Serre-affine (resp. quasi-affine), for any $y\in Y$ the morphism
\end{prop}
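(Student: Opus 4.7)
The plan is to dispose of the affine case as the definition, and for the other two to reduce via the previous proposition to the case of a morphism into an affine target, where the absolute notions for the source can be matched to the relative notions for the morphism. The affine case is literally the definition: $f$ is affine iff $f^{-1}(U_y)$ is affine for every $y\in Y$. For the Serre-affine and quasi-affine cases, the previous proposition reduces the claim to showing that, for a schematic morphism $g\colon Z\to W$ with $W=U_y$ (affine by Proposition \ref{U_p-is-affine}, since it has a minimum), $g$ is Serre-affine (resp.\ quasi-affine) iff $Z$ is. Let $\pi\colon W\to(*,A)$, $A=\OO(W)$, so that the composition $\pi\circ g\colon Z\to(*,A)$ realizes the absolute notions for $Z$ by Remark \ref{relative=absolute}.

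For the Serre-affine case, consider the Leray spectral sequence
\[ R^j\pi_*R^ig_*\M\Rightarrow R^{i+j}(\pi g)_*\M \]
applied to a quasi-coherent $\M$ on $Z$. Schematicity of $g$ gives that $R^ig_*\M$ is quasi-coherent on $W$ (Theorem \ref{sch-preserv-qc}), and since $W$ is affine and hence Serre-affine (Theorem \ref{AffineFinSp}) the higher $R^j\pi_*$ vanish on quasi-coherent modules. The spectral sequence therefore degenerates to $R^i(\pi g)_*\M=\pi_*R^ig_*\M$, and since $\pi_*$ is faithful on quasi-coherent $\OO_W$-modules (as $W$ is affine), $\pi_*R^ig_*\M=0$ iff $R^ig_*\M=0$; this yields the equivalence.

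For the quasi-affine case, $g_*\M$ is quasi-coherent by schematicity, and $W$ affine gives that the counit $\pi^*\pi_*g_*\M\to g_*\M$ is an isomorphism. Applying $g^*$ and using $(\pi g)^*=g^*\pi^*$ together with $\pi_*g_*=(\pi g)_*$, the counit $(\pi g)^*(\pi g)_*\M\to\M$ factors as
\[ (\pi g)^*(\pi g)_*\M=g^*\pi^*\pi_*g_*\M\xrightarrow{\sim}g^*g_*\M\to\M, \]
so its surjectivity (quasi-affineness of $Z$, via Remark \ref{relative=absolute}) is equivalent to surjectivity of $g^*g_*\M\to\M$ (quasi-affineness of $g$). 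The main obstacle is ensuring that direct images such as $R^ig_*\M$ and $g_*\M$ remain quasi-coherent so that the affineness of $W$ can actually be exploited; this is precisely the role of the standing schematic hypothesis, through Theorem \ref{sch-preserv-qc}.
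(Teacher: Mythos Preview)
Your proof is correct and rests on the same underlying idea as the paper's: the target $U_y$ is affine, so the relative notions for $f^{-1}(U_y)\to U_y$ coincide with the absolute notions for $f^{-1}(U_y)$.

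The paper, however, argues more directly and avoids both the previous localization proposition and the Leray spectral sequence. For the forward direction it simply observes that if $f$ is Serre-affine (resp.\ quasi-affine) then so is the restriction $f^{-1}(U_y)\to U_y$, and composing with the Serre-affine (resp.\ quasi-affine) morphism $U_y\to(*,\OO_y)$ gives the absolute statement for $f^{-1}(U_y)$ via Remark \ref{relative=absolute}. For the converse it computes stalks directly: $(R^if_*\M)_y=H^i(f^{-1}(U_y),\M)=0$ when $f^{-1}(U_y)$ is Serre-affine, and similarly one checks surjectivity of $f^*f_*\M\to\M$ stalkwise at each $x$ using quasi-affineness of $f^{-1}(U_{f(x)})$. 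Your route through the Leray spectral sequence and the counit factorization is a perfectly valid repackaging of the same content, but the paper's stalkwise argument is shorter and needs no appeal to spectral sequences or to the faithfulness of $\pi_*$ on quasi-coherent modules.
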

\begin{proof} If $f$ is Serre-affine, then the composition $f^{-1}(U_y)\to U_y \to (*,\OO_y)$ is Serre-affine, hence $f^{-1}(U_y)$ is Serre-affine. Conversely, if $f^{-1}(U_y)$ is Serre-affine, then $(R^if_*\M)_y=H^i(f^{-1}(U_y),\M)=0$ for any $i>0$, hence $f$ is Serre-affine. For quasi-affine the argument is analogous.
\end{proof}

Now the following is immediate (in view of Theorem \ref{AffineFinSp}):

\begin{thm} Let $f\colon X\to Y$ be a schematic morphism between schematic finite spaces. The following conditions are equivalent:
\begin{enumerate}
\item $f$ is affine.
\item $f$ is quasi-affine and acyclic (i.e., $R^if_*\OO_X=0$ for any $i>0$).
\item $f$ is Serre-affine and quasi-affine.
\end{enumerate}
\end{thm}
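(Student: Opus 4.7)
The plan is to reduce the relative statement to the absolute Theorem \ref{AffineFinSp} by checking that each of the three properties appearing in (1)--(3) is \emph{local on $Y$}, in the sense that it holds for the morphism $f\colon X\to Y$ if and only if it holds for the open subspace $f^{-1}(U_y)$ for every $y\in Y$. Once this is set up, Theorem \ref{AffineFinSp} applied pointwise in $y$ closes the loop.

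First I would assemble the three localization statements. For \emph{affine} this is the definition. For \emph{quasi-affine} and \emph{Serre-affine} it is the proposition proved just above (using the extension property of quasi-coherent modules on schematic spaces, Theorem \ref{extension}, in the non-trivial direction). For \emph{acyclic} it follows immediately from the stalk formula $(R^if_*\OO_X)_y = H^i(f^{-1}(U_y),\OO_X)$: vanishing of $R^if_*\OO_X$ for $i>0$ is equivalent to acyclicity of each $f^{-1}(U_y)$. Combining these, conditions (1), (2), (3) translate respectively into: for every $y\in Y$, $f^{-1}(U_y)$ is affine; $f^{-1}(U_y)$ is acyclic and quasi-affine; $f^{-1}(U_y)$ is Serre-affine and quasi-affine.

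It remains to observe that $f^{-1}(U_y)$ is a finite space (as an open subspace of the finite space $X$), so Theorem \ref{AffineFinSp} applies to it and gives the equivalence of these three properties for each fixed $y$. Taking the conjunction over all $y\in Y$ produces the equivalence of (1), (2) and (3) for $f$. The only point that could cause trouble is the implicit step of matching ``$f$ quasi-affine'' with ``$f^{-1}(U_y)$ quasi-affine (as a finite space)'' --- i.e., verifying Remark \ref{relative=absolute} in the form needed --- but this is precisely the content of the proposition we have just cited, so no additional work is required.
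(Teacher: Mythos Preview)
Your proof is correct and follows exactly the route the paper intends: the paper's own ``proof'' is the single sentence ``Now the following is immediate (in view of Theorem \ref{AffineFinSp})'', and you have simply unpacked that claim by invoking the two localization propositions immediately preceding the theorem together with the stalk description of $R^if_*$. Nothing more is needed.
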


\begin{cor} The composition of affine morphisms is affine.
\end{cor}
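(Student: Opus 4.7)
The plan is to deduce this directly from the characterization of affineness given in the preceding theorem, combined with the already established stability of the Serre-affine and quasi-affine conditions under composition.

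More precisely, suppose $f\colon X\to Y$ and $g\colon Y\to Z$ are affine schematic morphisms between schematic finite spaces. By the preceding theorem, $f$ and $g$ are each both Serre-affine and quasi-affine. The proposition established just above (``If $f$ and $g$ are Serre-affine (resp. quasi-affine) then $g\circ f$ is Serre-affine (resp. quasi-affine)'') then tells us that $g\circ f$ is simultaneously Serre-affine and quasi-affine. Applying the characterization theorem again in the reverse direction (equivalence of (1) and (3)) yields that $g\circ f$ is affine.

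The only subtlety to check is that $g\circ f$ is itself a schematic morphism between schematic finite spaces, so that the characterization theorem applies to it. But schematicness of the composition follows from Proposition \ref{composition-good}, which states that the composition of schematic morphisms is schematic, and $X$, $Z$ are schematic by assumption. So there is genuinely no obstacle here: the work has already been done in the previous propositions, and the corollary is essentially a one-line bookkeeping argument chaining the equivalences together.
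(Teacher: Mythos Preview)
Your argument is correct and matches the paper's intended reasoning: the corollary is stated immediately after the characterization theorem with no proof, precisely because it follows by combining the equivalence (1)$\Leftrightarrow$(3) with the already-established stability of Serre-affine and quasi-affine under composition. Your extra remark that $g\circ f$ is schematic (via Proposition~\ref{composition-good}) is a harmless bit of bookkeeping that the paper leaves implicit.
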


\begin{prop} Assume that $Y$ is affine. Then a morphism $f\colon X\to Y$ is affine if and only if $X$ is affine.
\end{prop}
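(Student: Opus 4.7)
The plan is to prove both implications using Theorem \ref{AffineFinSp} together with the Leray spectral sequence for $f\colon X\to Y$, exploiting that $f$ is schematic (so by Theorem \ref{sch-preserv-qc2} the derived direct images $R^q f_*$ preserve quasi-coherence) and that $Y$ is affine (so quasi-coherent sheaves on $Y$ have no higher cohomology and are determined by their global sections).

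For the direct implication ($X$ affine $\Rightarrow f$ affine), I would argue as follows. Since all spaces are schematic, by Theorem \ref{sch-preserv-qc2} each $R^q f_*\OO_X$ is quasi-coherent on $Y$. Because $Y$ is affine, $H^p(Y,R^q f_*\OO_X)=0$ for $p>0$, so the Leray spectral sequence degenerates and yields $H^n(X,\OO_X)=H^0(Y,R^n f_*\OO_X)$. Since $X$ is affine (hence acyclic), this group vanishes for $n>0$; and since $R^n f_*\OO_X$ is a quasi-coherent sheaf on the affine space $Y$, the vanishing of its global sections forces $R^n f_*\OO_X=0$ for $n>0$ (using that $\pi_{Y*}$ is an equivalence on quasi-coherent modules). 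Stalkwise this gives $H^n(f^{-1}(U_y),\OO_X)=0$, i.e.\ $f^{-1}(U_y)$ is acyclic. Since $X$ is affine and schematic, Corollary \ref{afinsemiseparado} says an open subset of $X$ is affine iff it is acyclic, so $f^{-1}(U_y)$ is affine and $f$ is affine.

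For the converse ($f$ affine and $Y$ affine $\Rightarrow X$ affine), I would verify the two conditions of Theorem \ref{AffineFinSp}: acyclicity and quasi-affineness of $X$. For acyclicity, affineness of $f$ makes each $f^{-1}(U_y)$ acyclic, hence $R^q f_*\OO_X=0$ for $q>0$; Leray then collapses to $H^n(X,\OO_X)=H^n(Y,f_*\OO_X)$, and $f_*\OO_X$ being quasi-coherent on affine $Y$ gives vanishing for $n>0$. For quasi-affineness, fix a quasi-coherent $\M$ on $X$ and $x\in X$, let $y=f(x)$, $B=\OO_Y(Y)$, $A=\OO_X(X)$, $A_y=(f_*\OO_X)_y$. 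Since $f^{-1}(U_y)$ is affine, $\M(f^{-1}(U_y))\otimes_{A_y}\OO_x\twoheadrightarrow\M_x$; since $Y$ is affine and $f_*\M$ is quasi-coherent, $\M(X)\otimes_B A_y\twoheadrightarrow\M(f^{-1}(U_y))$. Composing and using that $B\to\OO_x$ factors through $A$, we get $\M(X)\otimes_A\OO_x\twoheadrightarrow\M_x$, i.e.\ $\M$ is generated by its global sections.

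The only delicate point is the quasi-affine step, where one must carefully compose two surjections (one from affineness of $f^{-1}(U_y)$, one from affineness of $Y$) and then observe that the map factors through the tensor product over $A$; everything else is a formal application of the Leray spectral sequence, Theorem \ref{sch-preserv-qc2}, Corollary \ref{afinsemiseparado}, and Theorem \ref{AffineFinSp}.
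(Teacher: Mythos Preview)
Your proof is correct. For the implication ``$X$ affine $\Rightarrow$ $f$ affine'' you argue exactly as the paper does: the Leray spectral sequence collapses (since $Y$ is affine and $R^if_*\OO_X$ is quasi-coherent), so $H^0(Y,R^if_*\OO_X)=H^i(X,\OO_X)=0$ for $i>0$, hence $R^if_*\OO_X=0$; then $f^{-1}(U_y)$ is acyclic and Corollary \ref{afinsemiseparado} finishes.

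The difference lies in the other implication. The paper dispatches ``$f$ affine $\Rightarrow$ $X$ affine'' in one line: since $Y$ is affine, the structural morphism $Y\to (*,\OO_Y(Y))$ is affine (Remark \ref{relative=absolute}), and affine morphisms compose (the Corollary immediately preceding this Proposition), so $X\to (*,\OO_Y(Y))$ is affine, i.e.\ $X$ is affine. You instead verify directly the two conditions of Theorem \ref{AffineFinSp}, using Leray for acyclicity and a two-step tensor argument for quasi-affineness. Your route is self-contained and avoids appealing to the composition corollary, at the cost of the somewhat fiddly surjectivity chain $\M(X)\otimes_B\OO_x\twoheadrightarrow\M(X)\otimes_A\OO_x\to\M_x$; the paper's route is shorter because it leverages results already established in the subsection. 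Both are valid, but once the composition corollary is in hand, the paper's argument is the cleaner choice.
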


\begin{proof} If $f$  is affine, the composition $X\to Y\to (*,\OO_Y(Y))$ is affine, hence $X$ is affine. Conversely, if $X$ is affine, let us prove that $f^{-1}(U_y)$ is affine for any $y\in Y$. It suffices to see that $f^{-1}(U_y)$ is acyclic, so we conclude if we prove  that $R^if_*\OO_X=0$ for $i>0$. Since $Y$ is affine, $H^j(Y,R^if_*\OO_X)=0$ for any $j>0$ and any $i\geq 0$. Hence $H^0(Y, R^if_*\OO_X)=H^i(X,\OO_X)=0$ for $i>0$, because $X$ is affine. But $H^0(Y, R^if_*\OO_X)=0$ implies that $R^if_*\OO_X=0$ because $Y$ is affine.
\end{proof}

\begin{prop} The product of affine (resp. Serre-affine, quasi-affine) morphisms is affine (resp. Serre-affine, quasi-affine). That is, if $f\colon X\to Y$ and $f'\colon X'\to Y'$ are affine (resp., ...), then $f\times f'\colon X\times X'\to Y\times f'$ is affine (resp., ...).
\end{prop}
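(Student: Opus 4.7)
The plan is to factor the product morphism as a composition of two ``partial product'' morphisms, each of which is easier to analyse. Concretely, write
\[
f\times f'\colon X\times X'\xrightarrow{f\times 1_{X'}} Y\times X'\xrightarrow{1_Y\times f'} Y\times Y'.
\]
Since all three properties in question are stable under composition (by the preceding Proposition on composition of Serre-affine and quasi-affine morphisms, together with the Corollary that the composition of affine morphisms is affine), it suffices to prove the following reduced statement: whenever $g\colon X\to Y$ is affine (resp.\ Serre-affine, quasi-affine) and $S$ is a schematic finite space, the morphism $g\times 1_S\colon X\times S\to Y\times S$ has the same property. I would also note in passing that $g\times 1_S$ (and hence the whole product) is schematic by Proposition \ref{schematic-product}, so the relative notions are available.

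For this reduced statement I would apply the local characterization of the three properties established just above: $g\times 1_S$ is affine (resp.\ Serre-affine, quasi-affine) if and only if $(g\times 1_S)^{-1}(U_{(y,s)})$ has the corresponding absolute property for every $(y,s)\in Y\times S$. Since $U_{(y,s)}=U_y\times U_s$, this preimage equals
\[
(g\times 1_S)^{-1}(U_y\times U_s)=g^{-1}(U_y)\times U_s.
\]
By hypothesis on $g$ (and the same local characterization applied in the reverse direction), $g^{-1}(U_y)$ is affine (resp.\ Serre-affine, quasi-affine) for every $y\in Y$; and $U_s$ is affine by Proposition \ref{U_p-is-affine}, hence also Serre-affine and quasi-affine. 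Corollary \ref{product-affine} then says that the product of two affine (resp.\ Serre-affine, quasi-affine) spaces retains the property, so $g^{-1}(U_y)\times U_s$ is of the required type, finishing the argument.

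I do not foresee any serious obstacle: the proof is a clean assembly of three ingredients already in place --- composition stability for the three properties, the ``$U_y$-local'' characterization of each property, and Corollary \ref{product-affine} on products of affine-type spaces. The only mild point to verify is that the various spaces and partial products involved are schematic so that these tools apply, and this is immediate from the standing assumption of the subsection and from Proposition \ref{schematic-product}.
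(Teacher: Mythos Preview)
Your argument is correct, but it takes a longer route than the paper's. The paper observes directly that for any $(y,y')\in Y\times Y'$ one has
\[
(f\times f')^{-1}(U_{(y,y')})=(f\times f')^{-1}(U_y\times U_{y'})=f^{-1}(U_y)\times f'^{-1}(U_{y'}),
\]
which is already a product of two affine (resp.\ Serre-affine, quasi-affine) spaces, so Corollary~\ref{product-affine} finishes the proof in one stroke. Your factorization $f\times f'=(1_Y\times f')\circ(f\times 1_{X'})$ and the appeal to composition stability are therefore unnecessary: the same local computation you perform for $g\times 1_S$ works just as well for $f\times f'$ itself, and you end up invoking Corollary~\ref{product-affine} twice (once for each factor) plus the composition results, where once suffices. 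Both arguments rest on the same two ingredients---the local characterization of the three properties via the fibres $f^{-1}(U_y)$, and Corollary~\ref{product-affine}---so the difference is purely one of economy. (A minor point: to justify that the auxiliary morphisms are schematic you should cite the proposition on products of schematic \emph{morphisms} rather than Proposition~\ref{schematic-product}, which concerns spaces.)
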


\begin{proof} For any $(y,y')\in Y\times Y'$, $(f\times f')^{-1}(U_y\times U_{y'})= f^{-1}(U_y) \times {f'}^{-1}(U_{y'})$, which is a product of affine (resp., ...) spaces, hence affine (resp., ...) by Corollary \ref{product-affine}.
\end{proof}

The following result justifies the name ``semi-separated'':

\begin{prop} A schematic finite space $X$ is semi-separated if and only if $\delta\colon X\to X\times X$ is affine.
\end{prop}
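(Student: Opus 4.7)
The plan is to translate both properties into concrete conditions on the open sets $U_{pq} := U_p \cap U_q$ and then link them via the cohomological characterizations already established.

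First, the key unravelling. Note that $\delta^{-1}(U_{(p,q)}) = \delta^{-1}(U_p \times U_q) = U_p \cap U_q = U_{pq}$, so saying $\delta$ is affine means exactly that every $U_{pq}$ is an affine finite space. On the other hand, by formula \eqref{delta-qc} and Proposition \ref{schematic-cohomological}, semi-separatedness amounts to the vanishing of $H^i(U_{pq},\OO)$ for $i>0$ together with the compatibility $\OO_{pq}\otimes_{\OO_p}\OO_{p'} \simeq \OO_{p'q}$; but the latter compatibility is already built in since $X$ is assumed schematic (so $\delta_*\OO$ is automatically quasi-coherent). Thus, under the schematic hypothesis, semi-separated is equivalent to saying that each $U_{pq}$ is acyclic.

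With this reduction in hand, both implications are short. For ($\Leftarrow$), if $\delta$ is affine then each $U_{pq}$ is affine, hence acyclic, so $(R^i\delta_*\OO)_{(p,q)} = H^i(U_{pq},\OO) = 0$ for $i>0$; combined with the automatic quasi-coherence of $\delta_*\OO$, this gives semi-separated. For ($\Rightarrow$), suppose $X$ is semi-separated; fix $p,q\in X$ and observe that $U_{pq}$ is an open subset of $U_p$. The space $U_p$ has a minimum, so by Proposition \ref{U_p-is-affine} it is affine, and it inherits the schematic property as an open subset of a schematic space. Hence Corollary \ref{afinsemiseparado} applies to the affine and schematic space $U_p$: an open subset of $U_p$ is affine iff it is acyclic. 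Since $U_{pq}$ is acyclic by the semi-separated hypothesis, we conclude $U_{pq}$ is affine, so $\delta$ is affine.

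There is no real obstacle here; the whole content is the observation that $\delta^{-1}(U_{(p,q)}) = U_{pq}$ and the use of the ``local affineness $=$ local acyclicity'' criterion (Corollary \ref{afinsemiseparado}) on the affine schematic pieces $U_p$. The only point where one must be a little careful is to verify that $\delta_*\OO$ is quasi-coherent in the ($\Leftarrow$) direction without circularity, but this is free because $X$ is assumed schematic from the outset, which gives quasi-coherence of the whole $\RR\delta_*\OO$.
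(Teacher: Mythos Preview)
Your proof is correct and follows essentially the same approach as the paper: both directions hinge on the identification $\delta^{-1}(U_p\times U_q)=U_{pq}$, with the forward direction using Corollary~\ref{afinsemiseparado} applied inside the affine schematic space $U_p$, and the backward direction using that affine implies acyclic together with the automatic quasi-coherence of $\delta_*\OO$ from the schematic hypothesis. Your write-up is simply more explicit about the reductions than the paper's two-line argument.
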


\begin{proof} If $X$ is semi-separated, then  $\delta^{-1}(U_p\times U_q)=U_{pq}$ is acyclic, hence affine by Corollary \ref{afinsemiseparado}, since it is contained in $U_p$, schematic and affine. Thus, $\delta$ is affine. Conversely, if $\delta $ is affine, then it is acyclic, so $X$ is semi-separated.
\end{proof}

\begin{prop}\label{loc-acyc=Serre-afin}   A schematic morphism $f\colon X\to Y$ is locally acyclic if and only if its graph $\Gamma\colon X\to X\times Y$ is affine.
\end{prop}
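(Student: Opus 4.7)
The plan is to translate both conditions into statements about the open subsets $U_{xy} = U_x \cap f^{-1}(U_y) = \Gamma^{-1}(U_x \times U_y)$, and then invoke the earlier characterization of affineness inside a schematic affine space.

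First, I unpack the definitions. By definition, $\Gamma\colon X \to X \times Y$ is affine precisely when $\Gamma^{-1}(U_{(x,y)}) = U_{xy}$ is an affine finite space for every $(x,y) \in X \times Y$. On the other hand, $f$ is locally acyclic iff $\Gamma_*\OO_X$ is quasi-coherent and $R^i\Gamma_*\OO_X = 0$ for $i > 0$; since $(R^i\Gamma_*\OO_X)_{(x,y)} = H^i(U_{xy},\OO_X)$, and since $f$ is already schematic so that $\Gamma_*\OO_X$ is automatically quasi-coherent, the locally acyclic condition amounts to $U_{xy}$ being acyclic for every $(x,y)$.

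The forward direction ($\Gamma$ affine $\Rightarrow$ $f$ locally acyclic) is then immediate: if each $U_{xy}$ is affine, it is in particular acyclic by definition of affineness.

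For the converse, assume $f$ is locally acyclic, so each $U_{xy}$ is acyclic. I fix $x \in X$ and work inside $U_x$. The key observation is that $U_x$ is an affine finite space (it has a minimum, so Proposition \ref{U_p-is-affine} applies) and it is schematic (because $X$ is schematic, and being schematic is preserved by passing to open subsets). Hence by Proposition \ref{schematic-affine}, $U_x$ is in fact semi-separated; in particular $U_x$ is both affine and schematic. Now $U_{xy}$ is an open subset of the affine schematic space $U_x$, and it is acyclic by hypothesis. Corollary \ref{afinsemiseparado} then yields that $U_{xy}$ is affine. Varying $(x,y)$, every fiber $\Gamma^{-1}(U_{(x,y)})$ is affine, so $\Gamma$ is an affine morphism.

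The only non-routine step is recognizing that the local acyclicity of $f$ has to be upgraded to affineness via the characterization of affine opens in an affine schematic space. The reduction to working inside $U_x$ (which is already affine and schematic, hence semi-separated) is what makes this possible; the rest is bookkeeping.
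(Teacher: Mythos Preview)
Your proof is correct and follows essentially the same route as the paper's: both directions reduce to the equivalence, inside the schematic affine space $U_x$, between acyclicity and affineness of the open subset $U_{xy}$ via Corollary~\ref{afinsemiseparado}. The detour through semi-separatedness of $U_x$ (via Proposition~\ref{schematic-affine}) is harmless but unnecessary, since Corollary~\ref{afinsemiseparado} only requires $U_x$ to be schematic and affine.
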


\begin{proof} If $\Gamma$ is affine, then $\Gamma$ is  acyclic, so $f$ is locally acyclic. Conversely, if $f$ is locally acyclic, then  $U_{xy}$ is acyclic and, by Corollary \ref{afinsemiseparado}, $U_{xy}$ is affine, since  $U_x$ is schematic and affine. Hence $\Gamma$ is affine.
\end{proof}

\begin{thm}\label{projection-formula} Let $f\colon X\to Y$ be an affine morphism.

(1) For any quasi-coherent module $\M$ on $X$ and any quasi-coherent module $\Nc$ on $Y$, the natural morphism $\Nc\otimes f_*\M\to f_*(f^*\Nc \otimes \M)$ is an isomorphism.

(2) A morphism $\M\to\M'$ between quasi-coherent modules on $X$ is an isomorphism if and only if the induced morphism $f_*\M\to f_*\M'$ is an isomorphism.

(3) If $f_*\OO_X=\OO_Y$, then $f_*$ and $f^*$ yield an equivalence between the categories of quasi-coherent modules on $X$ an $Y$.

\end{thm}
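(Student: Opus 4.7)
The strategy for all three parts is to use that $f$ is both schematic and affine: then $f_*$ preserves quasi-coherence by Theorem \ref{sch-preserv-qc}, and for every $y \in Y$ the open subset $f^{-1}(U_y)$ is an affine finite space. Since the questions are local on $Y$, one may always reduce to $Y = U_y$, at which point $Y$ has a minimum (Corollary \ref{corqc} applies) and $X = f^{-1}(U_y)$ is affine.

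For part (1), both $\Nc \otimes_{\OO_Y} f_*\M$ and $f_*(f^*\Nc \otimes_{\OO_X} \M)$ are quasi-coherent on $Y$, since tensor products and $f_*$ preserve quasi-coherence. Reducing to $Y=U_y$ as above, it suffices to check the map on global sections. Set $B=\OO_Y(Y)$ and $A=\OO_X(X)$. The left-hand side has global sections $\Nc(Y)\otimes_B \M(X)$ directly. For the right-hand side, Proposition \ref{tens-affine} on the affine space $X$ gives $(f^*\Nc)(X)\otimes_A\M(X)$, and the description of $f^*$ in Example \ref{ejemplos}(1) yields $(f^*\Nc)(X)=\Nc(Y)\otimes_B A$. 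Under these identifications, the natural map is the obvious isomorphism $\Nc(Y)\otimes_B A\otimes_A\M(X)\cong \Nc(Y)\otimes_B\M(X)$.

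For part (2), only the converse is non-trivial. Being an isomorphism of $\OO_X$-modules is a stalkwise property, so fix $x\in X$ and set $y=f(x)$. Then $(f_*\phi)_y$ is the map $\Gamma(f^{-1}(U_y),\M)\to\Gamma(f^{-1}(U_y),\M')$ induced by $\phi$, and it is an isomorphism by hypothesis. Since $f^{-1}(U_y)$ is affine and $\M,\M'$ are quasi-coherent, the equivalence in Proposition \ref{derivedcat-affine} (or equivalently the definition of affineness) implies that $\phi|_{f^{-1}(U_y)}$ is an isomorphism, and in particular $\phi_x$ is.

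For part (3), apply the projection formula from part (1) with $\M=\OO_X$: the natural map $\Nc\otimes_{\OO_Y}f_*\OO_X \to f_*f^*\Nc$ is an isomorphism, and the hypothesis $f_*\OO_X=\OO_Y$ turns its left-hand side into $\Nc$, so the adjunction unit $\Nc\to f_*f^*\Nc$ is an isomorphism for every quasi-coherent $\Nc$ on $Y$. For the counit $\varepsilon\colon f^*f_*\M\to\M$ on a quasi-coherent $\M$, by part (2) it is enough to check that $f_*\varepsilon$ is an isomorphism; but the triangle identity factors the identity of $f_*\M$ as $f_*\M\to f_*f^*f_*\M\xrightarrow{f_*\varepsilon} f_*\M$, whose first arrow is the unit on $f_*\M$, already shown to be an isomorphism. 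Hence $f_*\varepsilon$ and then $\varepsilon$ are isomorphisms, and the adjoint pair $(f^*,f_*)$ restricts to mutually inverse equivalences between the categories of quasi-coherent modules on $X$ and on $Y$. The main subtlety is really only in part (1), where one must verify that the affine reduction identifies the canonical map with the identity; the remaining parts are formal consequences of (1), (2), and affineness.
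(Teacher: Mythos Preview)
Your proof is correct and follows essentially the same approach as the paper: reduce to $Y=U_y$ (equivalently, take stalks at $y$), use that $f^{-1}(U_y)$ is affine together with Proposition~\ref{tens-affine} for part~(1), use the equivalence between quasi-coherent modules on an affine space and modules over its global sections for part~(2), and combine these formally (via the unit/counit) for part~(3). The only cosmetic differences are the swap of the letters $A$ and $B$ and your explicit invocation of the triangle identity in~(3), where the paper instead reuses the projection formula to identify $f_*(f^*f_*\M)$ with $f_*\M$.
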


\begin{proof} (1) Let $y\in Y$ and let us denote $N=\Nc_y$, $M=\M(f^{-1}(U_y))$, $A=\OO_y$, $B=\OO_X(f^{-1}(U_y))$. Taking the stalk at $y$, we obtain the morphism $N\otimes_A M \to \Gamma(f^{-1}(U_y), f^*\Nc\otimes\M)$. By Proposition \ref{tens-affine}, $\Gamma(f^{-1}(U_y), f^*\Nc\otimes\M)= \Gamma(f^{-1}(U_y), f^*\Nc)\otimes_B  M$. Finally, $\Gamma(f^{-1}(U_y), f^*\Nc)=N\otimes_AB$, because $f^{-1}(U_y)$ is affine. Conclusion follows.

(2) For each $y\in Y$, let us denote $X_y=f^{-1}(U_y)$. Now, $\M\to \M'$ is an isomorphism if and only if $\M_{\vert X_y}\to \M'_{\vert X_y}$ is an isomorphism for any $y$. Since $X_y$ is affine, this is an isomorphism if and only if it is an isomorphism after taking global sections, i.e., iff $(f_*\M)_y\to (f_*\M')_y$ is an isomorphism.

(3) For any quasi-coherent module $\Nc$ on $Y$, the natural morphism $\Nc\to f_*f^*\Nc$ is an isomorphism by (1) and the hypothesis $f_*\OO_X=\OO_Y$. For any quasi-coherent module $\M$ on $X$, the natural morphism $f^*f_*\M\to \M$ is an isomorphism by (2), since $f_*(f^*f_*\M)=f_*\M\otimes f_*\OO_X$ and $f_*\OO_X=\OO_Y$.
\end{proof}

\begin{ejem}\label{ejem-affine} Let $(X,\OO)$ be a finite space and $\Bc$ a quasi-coherent $\OO$-algebra; that is, $\Bc$ is a sheaf of rings on $X$ endowed with a morphism of sheaves of rings $\OO\to\Bc$, such that $\Bc$ is quasi-coherent as an $\OO$-module. Then $(X,\Bc)$ is a finite space (it has flat restrictions). Moreover, the identity on $X$ and the morphism $\OO\to \Bc$ give a morphism of ringed finite spaces $(X,\Bc)\to (X,\OO)$. A $\Bc$-module $\M$ is quasi-coherent if and only if it is quasi-coherent as an $\OO$-module. It follows easily that $(X,\Bc)$ is schematic if and only if $(X,\OO)$ is schematic. In this case, the morphism $(X,\Bc)\to (X,\OO)$ is schematic and affine.
\end{ejem}

\begin{thm}\label{Stein-fact}(Stein's factorization). Let $f\colon X\to Y$ be a schematic morphism between schematic spaces. Then $f$ factors through an schematic morphism $f'\colon X\to Y'$ such that $f'_*\OO_X=\OO_{Y'}$ and an affine morphism $Y'\to Y$ which is the identity on the topological spaces. If $f$ is affine, then $f'$ is also affine and the functors
\[ \{ \text{Quasi-coherent }\OO_X-\text{modules}\} \overset{f'_*}{\underset{{f'}^*}{\overset\longrightarrow\leftarrow}}   \{ \text{Quasi-coherent }\OO_{Y'}-\text{modules}\} \]
are mutually inverse.
\end{thm}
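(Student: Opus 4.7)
The plan is to take $\Bc := f_*\OO_X$ as structure sheaf on $Y$ and set $Y' := (Y,\Bc)$. Since $f$ is schematic, Theorem~\ref{sch-preserv-qc} guarantees that $\Bc$ is a quasi-coherent $\OO_Y$-module; it is also a sheaf of rings, and the unit $\OO_Y\to f_*\OO_X$ makes it an $\OO_Y$-algebra. Example~\ref{ejem-affine} then directly yields that $Y'$ is a finite space, schematic because $Y$ is, and that the natural morphism $g\colon Y'\to Y$ (identity on the underlying topological space) is schematic and affine.

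Next I would define $f'\colon X\to Y'$ to have the same underlying continuous map as $f$, equipped with the stalkwise ring homomorphisms
$$\Bc_{f(x)} \;=\; \OO_X(f^{-1}(U_{f(x)})) \;\longrightarrow\; \OO_{X,x}$$
given by restriction along the inclusion $U_x\subseteq f^{-1}(U_{f(x)})$. These factor the original ring maps $\OO_{Y,f(x)}\to\OO_{X,x}$ of $f$ through $\OO_{Y,f(x)}\to\Bc_{f(x)}$, so $f'$ is a well-defined morphism of ringed spaces with $f = g\circ f'$. Since $Y$ and $Y'$ share the same underlying topological space, reading $f'_*\OO_X$ stalkwise shows $f'_*\OO_X = f_*\OO_X = \Bc = \OO_{Y'}$.

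The key step, and the only one requiring real work, is to verify that $f'$ is schematic. Letting $\Gamma'\colon X\to X\times Y'$ be its graph, the sheaves $R^i\Gamma'_*\OO_X$ and $R^i\Gamma_*\OO_X$ have identical stalks $H^i(U_{xy},\OO_X)$ but live over different rings: $\OO_{X,x}\otimes_k\Bc_y$ versus $\OO_{X,x}\otimes_k\OO_{Y,y}$. Writing $R=\OO_{X,x}\otimes_k\OO_{Y,y}$, $R'=\OO_{X,x'}\otimes_k\OO_{Y,y'}$, $S=\OO_{X,x}\otimes_k\Bc_y$, $S'=\OO_{X,x'}\otimes_k\Bc_{y'}$, the quasi-coherence of $\Bc$ over $\OO_Y$ (Theorem~\ref{qc}) together with associativity of $\otimes_k$ gives $S\otimes_R R'\cong S'$. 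By associativity of tensor product, this upgrades the base-change isomorphism $H^i(U_{xy},\OO_X)\otimes_R R'\overset{\sim}{\to} H^i(U_{x'y'},\OO_X)$, available because $f$ is schematic, into the required isomorphism $H^i(U_{xy},\OO_X)\otimes_S S'\overset{\sim}{\to} H^i(U_{x'y'},\OO_X)$, so $\RR\Gamma'_*\OO_X$ is quasi-coherent.

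Finally, if $f$ is affine then each $f^{-1}(U_y)$ is an affine finite space. Because $Y'$ has the same topology and the same minimum open sets as $Y$, and $\OO_X$ on $X$ is unchanged, ${f'}^{-1}(U_{y'})=f^{-1}(U_y)$ is affine, so $f'$ is affine. Combined with $f'_*\OO_X=\OO_{Y'}$, the equivalence of quasi-coherent module categories on $X$ and $Y'$ is then exactly Theorem~\ref{projection-formula}(3) applied to $f'$.
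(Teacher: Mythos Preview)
Your proof is correct and follows the same construction as the paper: set $Y'=(Y,f_*\OO_X)$ via Example~\ref{ejem-affine}, factor $f$ through it, and finish the affine case with Theorem~\ref{projection-formula}(3). The only cosmetic difference is in the schematicity check for $f'$: the paper argues in one line that $\RR\Gamma'_*\OO_X$ is quasi-coherent over $\OO_{X\times Y'}$ because it is quasi-coherent over $\OO_{X\times Y}$ and $\OO_{X\times Y'}$ is itself a quasi-coherent $\OO_{X\times Y}$-algebra (the general principle from Example~\ref{ejem-affine}), whereas you unpack this stalkwise into the explicit identity $S\otimes_R R'\cong S'$ and the cancellation $M\otimes_S S'\cong M\otimes_R R'$---the same content, just written out.
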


\begin{proof} Let $Y'$ be the finite space whose underlying topological space is $Y$ and whose sheaf of rings is $f_*\OO_X$. The morphism $f'\colon X\to Y'$ is the obvious one ($f'=f$ as continuous maps, and $\OO_{Y'}\to f_*\OO_X$ is the identity) and the affine morphism $Y'\to Y$ is that of example \ref{ejem-affine}. It is clear that $f'_*\OO_X=\OO_{Y'}$. Finally, let us see that $f'$ is schematic. Let $\Gamma'\colon X\to X\times Y'$ be the graph of $f'$ (which coincides topologically with the graph of $f$). Then, $\RR \Gamma'_*\OO$ is  quasi-coherent over $\OO_{X\times Y'}$ because it is quasi-coherent over $\OO_{X\times Y}$ ($f$ is schematic) and $\OO_{X\times Y'}$ is quasi-coherent over $\OO_{X\times Y}$. Finally, if $f$ is affine, then $f'$ is affine because $f^{-1}(U_y)=f'^{-1}(U_y)$. The last assertion of the theorem follows from  Theorem \ref{projection-formula},  (3).
\end{proof}

\subsection{Fibered products}

We have seen how the flatness condition on a finite space yields good properties for quasi-coherent modules, which fail  for arbitrary ringed finite spaces. However, an important property is lost. While the category of arbitrary ringed finite spaces has fibered products, the subcategory of finite spaces has not. However, we shall see now that the category of schematic spaces and schematic morphisms has fibered products.

\begin{thm}\label{fiberedproduct} Let $f\colon X\to S$ and $g\colon Y\to S$ be schematic morphisms between schematic finite spaces. Then
\begin{enumerate}
\item The fibered product $X\times_S Y$ is a schematic finite space and the natural morphisms $X\times_SY\to X$, $X\times_SY\to Y$ are schematic.
\item If $f$ and $g$ are affine, then $h\colon X\times_S Y\to S$ is also affine and $h_*\OO_{X\times_SY}=f_*\OO_X\otimes_{\OO_S}g_*\OO_Y$.
\end{enumerate}
\end{thm}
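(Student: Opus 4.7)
The strategy is to handle (1) by localizing so as to reduce the schematic property to an affine-case semi-separatedness check, and to handle (2) by realizing the affine morphisms $f,g$ as quasi-coherent algebras on $S$ (via Stein factorization, Theorem~\ref{Stein-fact}, and Example~\ref{ejem-affine}), so that the fibered product becomes the quasi-coherent algebra obtained by tensor product.

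For (1), first verify $X\times_S Y$ is a finite space. For $(x,y)\le (x',y')$ with $s=f(x)$, $s'=f(x')$, Corollary~\ref{localproperties-schematic}(2) gives $\OO_{x'}\otimes_{\OO_{s'}}\OO_{y'}\cong \OO_{x'}\otimes_{\OO_s}\OO_{y'}$ because $\OO_s\to\OO_{s'}$ is a flat monomorphism, and this is flat over $\OO_x\otimes_{\OO_s}\OO_y$ by tensoring the flat restriction maps $\OO_x\to\OO_{x'}$ and $\OO_y\to\OO_{y'}$. For schematicity, invoke the locality of schematic proved earlier in Section~\ref{Section-Schematic}, together with $U^{X\times_S Y}_{(x_0,y_0)}=U^X_{x_0}\times_{U^S_{f(x_0)}}U^Y_{y_0}$, to reduce to the case in which $X,Y,S$ have minima. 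Then $Z:=X\times_S Y$ has a minimum, so is affine (Proposition~\ref{U_p-is-affine}); by Proposition~\ref{schematic-affine} it remains to establish semi-separatedness. This is a direct verification of Proposition~\ref{schematic-cohomological} using that $X,Y,S$ are now semi-separated (so the intersections $U_{x_1x_2}$, $U_{y_1y_2}$, $U_{s_1s_2}$ are affine by Corollary~\ref{afinsemiseparado}), and Proposition~\ref{local-structure-affine} computes the relevant tensor products. Schematicity of $\pi_X$ and $\pi_Y$ then follows from Theorem~\ref{sch-preserv-qc2} combined with Theorem~\ref{qc-of-proj}.

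For (2), the key is the following base-change claim: for $f\colon X\to S$ affine schematic and any schematic $g\colon Y\to S$ between schematic spaces, the projection $\pi_Y\colon X\times_S Y\to Y$ is affine. To prove it, compute $\pi_Y^{-1}(U_y)=f^{-1}(U_{g(y)})\times_{U_{g(y)}}U_y$. Through Stein factorization (Theorem~\ref{Stein-fact}), $f^{-1}(U_{g(y)})\to U_{g(y)}$ is, up to equivalence of quasi-coherent categories, the affine morphism $(U_{g(y)},\Bc)\to U_{g(y)}$ associated with the quasi-coherent algebra $\Bc:=f_*\OO_X|_{U_{g(y)}}$ (Example~\ref{ejem-affine}); its base change to $U_y$ is $(U_y,g^*\Bc)\to U_y$, again affine by Example~\ref{ejem-affine}. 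Hence $\pi_Y$ is affine, and $h=g\circ\pi_Y$ is a composition of affines, hence affine. The formula $h_*\OO_{X\times_SY}=f_*\OO_X\otimes_{\OO_S}g_*\OO_Y$ follows by passing to stalks at $s\in S$: by the affine identification and Proposition~\ref{tens-affine}, both sides equal $(f_*\OO_X)_s\otimes_{\OO_s}(g_*\OO_Y)_s$.

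The main obstacle is the base-change claim, since the naive ``space has a minimum'' argument breaks down ($f^{-1}(U_{g(y)})$ need not have a minimum). The detour through Stein factorization converts the affine morphisms into quasi-coherent algebras on the base, and the tensor product of such algebras is again a quasi-coherent algebra, delivering an affine space directly via Example~\ref{ejem-affine}. The same mechanism yields the global sections formula essentially for free.
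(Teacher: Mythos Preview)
Your plan has a genuine gap: the induction that drives the paper's proof is missing, and the places where you try to avoid it do not close.

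In part~(1), after reducing to $X=U_{x_0}$, $Y=U_{y_0}$, $S=U_{s_0}$, you want to verify semi-separatedness of $Z=X\times_S Y$ ``directly''. But for $z_1,z_2\in Z$ one has $U_{z_1z_2}=U_{x_1x_2}\times_{U_{s_1s_2}}U_{y_1y_2}$, and although $U_{x_1x_2}$, $U_{y_1y_2}$, $U_{s_1s_2}$ are affine (by semi-separatedness of $X,Y,S$), they do \emph{not} have minima. So you cannot conclude that $U_{z_1z_2}$ is acyclic, nor compute $\OO_{z_1z_2}$, without already knowing the theorem for these smaller fibered products. Proposition~\ref{local-structure-affine} only identifies $\OO_{x_1x_2}$, $\OO_{y_1y_2}$, $\OO_{s_1s_2}$ as tensor products; it says nothing about the global sections or acyclicity of their fibered product over $U_{s_1s_2}$. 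This is exactly why the paper runs an induction on $\#(X\times Y)$. Also, Theorem~\ref{qc-of-proj} concerns the projection from a \emph{direct} product $X\times X'$; it does not apply to $\pi_X\colon X\times_S Y\to X$. The paper establishes quasi-coherence of $\RR\pi_{X*}$ by hand (using Theorem~\ref{aciclico} and the inductive hypothesis that the fibres $U_{x'}\times_{U_s}Y$ are affine).

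In part~(2), Stein factorization gives a weak equivalence $f^{-1}(U_{g(y)})\to (U_{g(y)},\Bc)$, not an isomorphism. To deduce that $f^{-1}(U_{g(y)})\times_{U_{g(y)}}U_y$ is affine from the affineness of $(U_y,g^*\Bc)$, you would need to know that the base change of this weak equivalence along $U_y\to U_{g(y)}$ is again a weak equivalence (in particular, affine). But ``base change of an affine schematic morphism is affine'' is precisely the corollary of the theorem you are proving; invoking it here is circular. The paper instead proves parts~(1) and~(2) simultaneously within the same induction, so that the affineness of the smaller fibered products is available as an inductive hypothesis when needed.
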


\begin{proof} First of all, let us see that  $X\times_SY$ is a finite space, i.e., it has flat restrictions. Let $(x,y)\leq (x',y')$ in $X\times_S Y$. Let $s,s'$ be their images in $S$. We have to prove that $\OO_x\otimes_{\OO_s}\OO_y \to \OO_{x'}\otimes_{\OO_{s'}}\OO_{y'}$ is flat. Since $\OO_s\to\OO_{s'}$ is flat, the morphism $$\OO_x\otimes_{\OO_s}\OO_y \to (\OO_{x}\otimes_{\OO_{s}}\OO_{y})\otimes_{\OO_s}\OO_{s'}=(\OO_x\otimes_{\OO_s}\OO_{s'})\otimes_{\OO_{s'}} (\OO_y\otimes_{\OO_s}\OO_{s'})$$ is flat. Now, since $f\colon U_x\to U_s$ and $g\colon U_y\to U_s$ are schematic morphisms, one has that
$ \OO_x\otimes_{\OO_s}\OO_{s'} = \OO_{xs'}$ and $  \OO_y\otimes_{\OO_s}\OO_{s'}=\OO_{ys'}$, and then we have a flat morphism
\[ \OO_x\otimes_{\OO_s}\OO_y \to \OO_{xs'}\otimes_{\OO_{s'}}\OO_{ys'}.\]
Now, $\OO_{xs'}\otimes_{\OO_x}\OO_{x'}=\OO_{x's'}=\OO_{x'}$, and $\OO_{ys'}\otimes_{\OO_y}\OO_{y'}=\OO_{y's'}=\OO_{y'}$. Since $\OO_x\to\OO_{x'}$ and $\OO_y\to\OO_{y'}$ are flat, the morphism
\[ \OO_{xs'}\otimes_{\OO_{s'}}\OO_{ys'}\to \OO_{x'}\otimes_{\OO_x}(\OO_{xs'}\otimes_{\OO_{s'}}\OO_{ys'})\otimes_{\OO_y}\OO_{y'}=\OO_{x'}\otimes_{\OO_{s'}}\OO_{y'}\]
is flat. In conclusion, $\OO_x\otimes_{\OO_s}\OO_y \to \OO_{x'}\otimes_{\OO_{s'}}\OO_{y'}$ is flat.

Let us prove the rest of the theorem by induction on $\#(X\times Y)$. If $X$ and $Y$ are punctual, it is immediate. Assume the theorem holds for $\#(X\times Y)<n$, and let us assume now that $\# (X\times Y)=n$.

Let us denote $Z=X\times_SY$, and $\pi\colon Z\to X$, $\pi'\colon Z\to Y$ the natural morphisms. Whenever we take $z_i\in Z$, we shall denote by $x_i$, $y_i$ the image of $z_i$ in $X$ and $Y$.

(1) Assume that $X=U_x$, $Y=U_y$ and $f(x)=g(y)$. Let us denote $s=f(x)=g(y)$, $z=(x,y)\in Z$. Obviously $Z=U_z$. Let $\delta\colon Z\to Z\times Z$ be the diagonal morphism. Let us see that $R^i\delta_*\OO_Z = 0$ for $i>0$. Indeed, $(R^i\delta_*\OO)_{(z,z)}=0$ because $U_z$ is acyclic. Now, if $(z_1,z_2)\in Z\times Z$ is different from $(z,z)$, then, $U_{z_1z_2}=U_{x_1x_2}\times_{U_s}U_{y_1y_2}$, and $U_{x_1x_2}\times U_{y_2y_2}$ has smaller order than $U_x\times U_y$. Moreover, $U_{x_1x_2}$ and $U_{y_1y_2}$ are affine because $U_x$ and $U_y$ are schematic. By induction, $U_{z_1z_2}$ is affine, hence $(R^i\delta_*\OO)_{(z_1,z_2)}=0$. Let us see now that $\delta_*\OO$ is quasi-coherent. We have to prove that $\OO_{z_1}\otimes_{\OO_z}\OO_{z_2}=\OO_{z_1z_2}$ for any $z_1> z< z_2$. By induction, we have that $$\OO_{z_1z_2}=\OO_{x_1x_2}\otimes_{\OO_s}\OO_{y_1y_2},\quad \OO_{z_1}=\OO_{x_1}\otimes_{\OO_s}\OO_{y_1},\quad \OO_{z_2}=\OO_{x_2}\otimes_{\OO_s}\OO_{y_2}$$
One concludes easily because $\OO_{x_1x_2}=\OO_{x_1}\otimes_{\OO_s}\OO_{x_2}$ and $\OO_{y_1y_2}=\OO_{y_1}\otimes_{\OO_s}\OO_{y_2}$.
Hence $Z$ is schematic. Let us see that $\pi\colon Z\to X$ is schematic (hence affine). By Theorem \ref{sch-preserv-qc2} it suffices to see that $\RR \pi_*\M$ is quasi-coherent for any quasi-coherent module $\M$ on $Z$. Now, $R^i\pi_*\M=0$ for $i>0$, since $(R^i\pi_*\M)_x=0$ because $Z$ is acyclic and $(R^i\pi_*\M)_{x'}=0$ for $x'>x$ because $U_{x'}\times_{U_s}U_y$ is affine by induction. Finally, $\pi_*\M$ is quasi-coherent: indeed, $(\pi_*\M)_x\otimes_{\OO_x}\OO_{x'}=(\pi_*\M)_{x'}$ by Theorem \ref{aciclico}, because $U_{x'}\times_{U_s}U_y$ is, by induction, an acyclic open subset of $U_z$. Analogously, $\pi'\colon Z\to Y$ is schematic and affine. Finally, $\OO_Z(Z)=\OO_X(X)\otimes_{\OO_S(S)}\OO_Y(Y)$ because $\OO_z=\OO_x\otimes_{\OO_s}\OO_y$.

(2) Assume that $X=U_x$ (or $Y=U_y$). Let $s=f(x)$. Then $X\times_SY=U_x\times_{U_s}g^{-1}(U_s)$. If $g^{-1}(U_s)\neq Y$ we conclude by induction. So assume $Y=g^{-1}(U_s)$. If $Y=U_y$, we conclude by (1) if $g(y)=s$ or by induction if $g(y)\neq s$. If $Y$ has not a minimum,  for any $z\in Z$, $U_z$ is schematic by induction, hence $Z$ is schematic. Moreover $\pi'\colon Z\to Y$ is schematic and affine because it is schematic and affine on any $U_y$ by induction. Let us see now that $\pi\colon Z\to U_x$ is schematic. It suffices to see that $\RR \pi_*\M$ is quasi-coherent for any quasi-coherent module $\M$ on $Z$. We have to prove that
\[ (R^i\pi_*\M)_x\otimes_{\OO_x}\OO_{x'}\to (R^i\pi_*\M)_{x'}\] is an isomorphism for any $x'\in U_x$. By induction, for any $y\in Y$ one has \[ \M(U_x\times_{U_s}U_y)\otimes_{\OO_x}\OO_{x'}= \M(U_{x'}\times_{U_s}U_y).\]
Let us denote $Z'=U_{x'}\times_{U_s}U_y$, $\pi''$ the restriction of $\pi'$ to $Z'$ and $\M'$ the restriction of $\M$ to $Z'$; then,
 $\Gamma(Y, C^r\pi'_*\M)\otimes_{\OO_x}\OO_{x'} = \Gamma(Y, C^r\pi''_*\M')$. Now, since $\pi'$ and $\pi''$ are affine,
\[ \aligned H^i(U_x\times_{U_s}Y,\M)\otimes_{\OO_x}\OO_{x'}&= H^i(Y,\pi'_*\M)\otimes_{\OO_x}\OO_{x'} = H^i\Gamma (Y,C^\punto \pi'_*\M)\otimes_{\OO_x}\OO_{x'}\\ &= H^i\Gamma (Y,C^\punto \pi''_*\M')= H^i(Y,\pi''_*\M')=  H^i(U_{x'}\times_{U_s}Y,\M).\endaligned\]
Since $H^i(U_{x'}\times_{U_s}Y,\M)=(R^i\pi_*\M)_{x'}$, we have proved that $R^i\pi_*\M$ is quasi-coherent. If $Y$ is affine, then $U_x\times_{U_s}Y$ is affine because $U_x\times_{U_s}Y\to Y$ is schematic and affine, and $\pi'_*\OO_Z= g^*f_*\OO_X$, since this equality holds taking the stalk at any $y\in Y$. Taking global sections, one obtains $\OO_Z(Z)=\OO_x\otimes_{\OO_s}\OO_Y(Y)$.

(3) For general $X$ and $Y$. For any $z=(x,y)\in Z$, $U_z$ is schematic by (2), hence $Z$ is schematic. The morphisms $Z\to X$ and $Z\to Y$ are schematic because their are so on $U_x$ and $U_y$ respectively (by (2)). If $X$ and $Y$ are affine over $S$, then $Z$ is affine over $X$ and $Y$, because it is so over $U_x$ and $U_y$ respectively. Finally, if $X$ and $Y$ are affine over $S$, then $h_*\OO_Z=f_*\OO_X\otimes_{\OO_S}g_*\OO_Y$; indeed, the question is local on $S$, so we may assume that $S=U_s$. Hence $X$ and $Y$ are affine, and then $\pi'_*\OO_Z=g^*f_*\OO_X$, since this holds taking fibre at any $y\in Y$. Taking global sections, we obtain $\OO_Z(Z)=\OO_X(X)\otimes_{\OO_s}\OO_Y(Y)$.

\end{proof}

An easy consequence of this theorem is the following:

\begin{cor} Let $f\colon X\to S$ and $g\colon S'\to S$ be schematic morphisms between schematic spaces. If $f$ is affine, then $f'\colon X\times_SS'\to S'$ is affine. If in addition $f_*\OO_X=\OO_S$, then $f'_*\OO_{X\times_SS'}=\OO_{S'}$.
\end{cor}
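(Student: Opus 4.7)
My plan is to reduce both assertions to a single application of Theorem~\ref{fiberedproduct}(2) after localising on the base. Since affineness of $f'$ is a local property on $S'$, it suffices to show that $(f')^{-1}(U_{s'}) = X\times_S U_{s'}$ is affine for every $s'\in S'$. Setting $s=g(s')\in S$ and using that $g$ is monotone, one has $g(U_{s'})\subseteq U_s$, so
\[ X\times_S U_{s'} \;=\; f^{-1}(U_s)\times_{U_s}U_{s'}. \]

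The next step would be to check that both structure morphisms of this fibered product are schematic and affine, so that Theorem~\ref{fiberedproduct}(2) applies. The morphism $f^{-1}(U_s)\to U_s$ is affine by hypothesis on $f$, and schematic as the localisation of the schematic morphism $f$. The spaces $U_{s'}$ and $U_s$ are affine by Proposition~\ref{U_p-is-affine} and schematic as open subsets of schematic spaces, and the restricted morphism $g\colon U_{s'}\to U_s$ is schematic (schematicity being local on the source). Now the proposition in Section~\ref{Section-SchematicMorphisms} stating that, over an affine target, affineness of the morphism is equivalent to affineness of the source implies that $g\colon U_{s'}\to U_s$ is affine. Applying Theorem~\ref{fiberedproduct}(2) gives a combined affine morphism $h\colon f^{-1}(U_s)\times_{U_s}U_{s'}\to U_s$; composing with $U_s\to(*,\OO_s)$ shows that $X\times_S U_{s'}$ is affine, establishing the first claim.

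For the second claim, one computes the stalk at $s'$:
\[ (f'_*\OO_{X\times_SS'})_{s'} = \Gamma(X\times_S U_{s'},\OO) = (h_*\OO)_s. \]
Theorem~\ref{fiberedproduct}(2) furnishes
\[ h_*\OO = (f_*\OO_X)|_{U_s}\otimes_{\OO_{U_s}}(g_*\OO_{S'})|_{U_s}, \]
and taking stalks at $s$, together with the hypothesis $f_*\OO_X=\OO_S$, yields
\[ (h_*\OO)_s = \OO_s\otimes_{\OO_s}\Gamma(U_{s'},\OO_{S'}) = \OO_s\otimes_{\OO_s}\OO_{s'} = \OO_{s'}. \]
Naturality in $s'$ promotes these stalk identifications to the sheaf equality $f'_*\OO_{X\times_SS'}=\OO_{S'}$. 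The one point that requires genuine care is recognising that the auxiliary morphism $U_{s'}\to U_s$ is affine; this is automatic from the affine schematic source and target, but it is the key ingredient that makes Theorem~\ref{fiberedproduct}(2) available in the present setting.
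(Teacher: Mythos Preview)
Your argument is correct and is exactly the natural derivation the paper intends when it calls the corollary ``an easy consequence'' of Theorem~\ref{fiberedproduct}. One small slip: in the displayed formula for $h_*\OO$ the second factor should be $(g|_{U_{s'}})_*\OO_{U_{s'}}$, not $(g_*\OO_{S'})|_{U_s}$ (these differ as sheaves on $U_s$, since $g^{-1}(U_s)$ need not equal $U_{s'}$); your subsequent stalk computation $\Gamma(U_{s'},\OO_{S'})=\OO_{s'}$ is the value of the correct sheaf, so the conclusion stands.
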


\medskip
\subsection{Characterization of the category of schematic spaces and schematic morphisms}
\medskip

Let $\C_{Schematic}$ be the category of schematic finite spaces and schematic morphisms and $\C_{FinSp}$ the category of finite spaces (an arbitrary morphisms of ringed spaces).

\begin{thm} \label{characterization-SchematicCategory1} Let $\C$ be a subcategory of $\C_{FinSp}$. Assume that
\begin{enumerate} \item For any morphism $f\colon X\to Y$ in $\C$, $\RR f_*$ preserves quasi-coherence.
\item If $X$ belongs to $\C$, then any open subset $U$ of $X$ belongs to $\C$ and the inclusion morphism $U\hookrightarrow X$ is a morphism in $\C$.
\end{enumerate}
Then $\C$ is a subcategory of $\C_{Schematic}$. In other words, $\C_{Schematic}$ is the biggest subcategory of $\C_{FinSp}$ satisfying {\rm (1)} and {\rm (2)}.
\end{thm}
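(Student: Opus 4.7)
The plan is to show, in order, that (a) every object of $\C$ is a schematic finite space, and (b) every morphism of $\C$ is a schematic morphism. Both parts follow quickly from the hypotheses once we invoke the two main characterizations proved earlier: the extension characterization of schematic spaces (the remark just after Theorem \ref{extension}) and the preservation characterization of schematic morphisms (Theorem \ref{sch-preserv-qc2}).

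For part (a), fix $X \in \C$. I want to show $X$ is schematic. By the remark following Theorem \ref{extension}, it suffices to verify that $\RR j_*\OO_{\vert U}$ is quasi-coherent for every open immersion $j\colon U \hookrightarrow X$. Hypothesis (2) gives that $U$ is again an object of $\C$ and that $j$ is a morphism in $\C$. Then hypothesis (1) applied to $j$ says that $\RR j_*$ preserves quasi-coherence; applying this to the quasi-coherent $\OO_U$-module $\OO_{\vert U}$ yields precisely what we need. Hence $X$ is schematic.

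For part (b), take an arbitrary morphism $f\colon X \to Y$ in $\C$. By part (a), $X$ is a schematic finite space. By hypothesis (1), $\RR f_*$ preserves quasi-coherence. Theorem \ref{sch-preserv-qc2} now applies directly and forces $f$ to be schematic. Combining (a) and (b), every object and every morphism of $\C$ lies in $\C_{Schematic}$, so $\C \subseteq \C_{Schematic}$ as a subcategory.

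I expect no real obstacle here: the argument is essentially a bookkeeping assembly of the extension theorem and Theorem \ref{sch-preserv-qc2}. The only subtle point to double-check is that the converse direction of the extension theorem, cited as a remark, is actually needed and available; if one wanted to avoid relying on the remark, one could instead argue directly that $\RR\delta_*\OO_X$ is quasi-coherent by factoring $\delta\colon X \to X\times X$ through the inclusion of the graph-type open neighborhoods, but the cleaner route is to use the remark verbatim. Once that is accepted, the proof reduces to the two-line implication above.
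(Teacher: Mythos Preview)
Your proof is correct and follows essentially the same route as the paper: first show every object of $\C$ is schematic by applying hypotheses (1) and (2) to the open inclusions $j\colon U\hookrightarrow X$ and invoking the converse of the extension theorem (the remark after Theorem \ref{extension}), then deduce every morphism is schematic via Theorem \ref{sch-preserv-qc2}. The paper's version is just a terser packaging of the same two steps.
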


\begin{proof} If $X$ belongs to $\C$, then for any open subset $j\colon U\hookrightarrow X$, $\RR j_*$ preserves quasi-coherence (by conditions (1) and (2)). Hence, $X$ is schematic. If $f\colon X\to Y$ is a morphism in $\C$, then $\RR f_*$ preserves quasi-coherence by (1) and $X$ is schematic. Hence $f$ is schematic by Theorem \ref{sch-preserv-qc2}.
\end{proof}

\begin{thm} \label{characterization-SchematicCategory2} Let $\C$ be a subcategory of $\C_{FinSp}$. Assume that
\begin{enumerate} \item For any morphism $f\colon X\to Y$ in $\C$, $\RR f_*$ preserves quasi-coherence.
\item $\C$ is closed under products and graphs, i.e.: if $X$ and $Y$ belong to $\C$, then $X\times Y$ belongs to $\C$, and if $f\colon X\to Y$ is a morphism in $\C$, then the graph $\Gamma\colon X\to X\times Y$ is a morphism in $\C$.
\end{enumerate}
Then $\C$ is a subcategory of $\C_{Schematic}$. In other words, $\C_{Schematic}$ is the biggest subcategory of $\C_{FinSp}$ satisfying {\rm (1)} and {\rm (2)}
\end{thm}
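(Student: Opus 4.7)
The plan is to reduce the statement directly to the definitions of schematic space and schematic morphism, exploiting the two hypotheses on $\C$ in a very symmetric way. The key observation is that the diagonal $\delta\colon X\to X\times X$ is literally the graph of the identity $\Id_X$, so closure under graphs turns a condition about identities into a condition about diagonals, which is exactly what schematicness of an object requires.

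First I would handle objects. Let $X$ be an object of $\C$. Since $\C$ is a subcategory of $\C_{FinSp}$, the identity $\Id_X\colon X\to X$ is a morphism in $\C$. By condition (2), $X\times X$ is an object of $\C$ and the graph of $\Id_X$, which is the diagonal morphism $\delta\colon X\to X\times X$, is a morphism in $\C$. Condition (1) applied to $\delta$ yields that $\RR\delta_*\OO_X$ is quasi-coherent. By definition, $X$ is therefore a schematic finite space.

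Next I would handle morphisms. Let $f\colon X\to Y$ be a morphism in $\C$. Then both $X$ and $Y$ belong to $\C$, hence so does $X\times Y$ by closure under products, and the graph $\Gamma\colon X\to X\times Y$ is a morphism in $\C$ by closure under graphs. Applying condition (1) to $\Gamma$, we obtain that $\RR\Gamma_*\OO_X$ is quasi-coherent, which is exactly the definition of $f$ being a schematic morphism. Combined with the previous paragraph, every object and every morphism of $\C$ lies in $\C_{Schematic}$, proving the inclusion $\C\subseteq \C_{Schematic}$.

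There is essentially no obstacle: the whole argument is an unfolding of the definitions, with the only nontrivial point being the recognition that $\delta$ is an instance of the graph construction (applied to $\Id_X$). Note that $\C_{Schematic}$ itself satisfies (1) by Theorem \ref{sch-preserv-qc} and (2) by Proposition \ref{schematic-product} and the corollary following Theorem \ref{sch-preserv-qc2}, so it is genuinely the largest such subcategory, which justifies the ``in other words'' clause.
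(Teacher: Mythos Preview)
Your proof is correct and follows essentially the same route as the paper: use closure under graphs to get $\RR\Gamma_*\OO_X$ quasi-coherent (hence $f$ schematic), and specialize to $f=\Id_X$ so that $\Gamma=\delta$ (hence $X$ schematic). The paper just reverses the order of presentation, deducing the object case from the morphism case, and omits your final paragraph verifying that $\C_{Schematic}$ itself satisfies (1) and (2).
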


\begin{proof}  If $f\colon X\to Y$ is a morphism in $C$, then its graph $\Gamma\colon X\to X\times Y$ is a morphism in $\C$ (by (2)), so $\RR\Gamma_*$ preserves quasi-coherence. Thus $f$ is schematic. In particular, for any object $X$ in $\C$, the identity is schematic, i.e., $X$ is schematic.

\end{proof}

\section{From Finite Spaces to Schemes}\label{Schematic-Schemes}

We have already seen that the category of punctual ringed spaces is equivalent to the category of affine schemes. Explicitly, we have the functor
\[ \aligned \Spec\colon \{\text{Punctual ringed spaces}\} &\to \{\text{Affine schemes}\}\\ (*,A) &\mapsto \Spec A\endaligned \] whose inverse is the functor $\Spec A\mapsto \Gamma(\Spec A,\OO_{\Spec A})$. Now we see how to extend this functor from finite spaces to ringed spaces.
\medskip

\subsection{The $\Spec$ functor }\label{Spec}
\medskip

 Let $(X,\OO)$ be a ringed finite space. For each $p\in X$ let us denote $S_p$ the affine scheme $S_p=\Spec\OO_p$. For
each $p\leq q$, we have a morphism of schemes $S_q\to S_p$, induced by the ring homomorphism $\OO_p\to \OO_q$. We shall define
\[ \Spec (X):=\underset{p\in X}\ilim S_p\] where $\ilim$ is the direct limit (in the category of ringed spaces). More precisely: for each $p\leq q$,
let us denote $S_{pq}=S_q$. We have morphisms $S_{pq}\to S_q$ (the identity) and $S_{pq}\to S_p$ (taking spectra in the morphism $r_{pq}\colon \OO_p\to \OO_q$). We have then morphisms
\[ \underset{p\leq q}\coprod S_{pq}  \aligned \,_{\longrightarrow} \\ \,^{\longrightarrow} \endaligned \, \underset{p}\coprod S_p\] and we define the ringed space $\Spec (X)$ as the cokernel. That is, $\Spec (X)$ is the cokernel topological space, and $\OO_{\Spec (X)}$ is the sheaf of rings defined by: for any open subset $V$ of $\Spec (X)$, we define $\OO_{\Spec (X)}(V)$ as the kernel of
\[ \underset p\prod \OO_{S_p}(V_p)   \aligned \,_{\longrightarrow} \\ \,^{\longrightarrow} \endaligned \, \underset{p\leq q}\prod \OO_{S_{pq}}(V_{pq})\] where $V_p$ (resp. $V_{pq}$) is the preimage of $V$ under the natural map $S_p\to \Spec(X)$ (resp. $S_{pq}\to \Spec (X)$).
 In particular, $X$ and $\Spec (X)$ have the same global functions, i.e., $\OO_{\Spec (X)}(\Spec (X))=\OO_X(X)$. We say that $\Spec (X)$ is {\it the ringed space obtained by gluing the affine schemes $S_p$ along the schemes $S_{pq}$.} By definition of a cokernel (or a direct limit), for any ringed space $(T,\OO_T)$, the sequence
\[ \Hom(\Spec (X),T)\to \underset {p\in X}\prod \Hom(S_p,T)\aligned \,_{\longrightarrow} \\ \,^{\longrightarrow} \endaligned \, \underset{p\leq q}\prod \Hom (S_{pq},T)\] is exact.

This construction is functorial: if $f\colon X'\to X$ is a morphism between ringed finite spaces, it induces a morphism $\Spec(f)\colon \Spec (X')\to \Spec (X)$. In particular,  for any ringed finite space $X$, the natural morphism $X\to (*,A)$ (with $ A=\OO(X)$) induces a morphism
\[ \Spec (X)\to \Spec A.\]

From now on  all finite spaces are assumed to be schematic and all morphisms are assumed to be schematic.

\begin{defn} We say that a schematic space $X$ has {\it open restrictions} if for any $p\leq q$ the morphism $\Spec\OO_q\to\Spec\OO_p$ is an open immersion.
\end{defn}

If $X$ has open restrictions, then the well known gluing technique of schemes tells us that $\Spec (X)$ is a (quasi-compact and quasi-separated) scheme. If $f\colon X\to Y$ is a schematic morphism between schematic spaces with open restrictions, then $\Spec(f)\colon \Spec X\to\Spec Y$ is a morphism of schemes. If $X\to S$, $Y\to S$ are schematic morphisms between schematic spaces with open restrictions, then $X\times_SY$ is a schematic space with open restrictions too.

If $X$ is the ringed finite space associated to a (locally affine) finite covering $\U$ of a quasi-compact and quasi-separated scheme $S$, then $X$ has open restrictions and there is a natural isomorphism $\Spec (X)\overset\sim\to S$.

Let us denote by $\C_{qcqs-Schemes}$ the category of quasi-compact and quasi-separated schemes and by  $C_{Schematic}$ the category of schematic finite spaces and schematic morphisms. Let  $\C_{Schematic}^{open}$ be the full subcategory of $\C_{Schematic}$ whose objects are those schematic spaces that have open restrictions.

\begin{prop}\label{fullessentiallysurjective} The functor
\[\Spec\colon \C_{Schematic}^{open}\to \C_{qcqs-Schemes}\] is  essentially surjective. Moreover, for any morphism of schemes $f\colon \overline S\to S$, there exists a morphism $h\colon\overline X\to X$ in $\C_{Schematic}^{open}$ such that $S\simeq\Spec(X)$, $\overline S\simeq \Spec(\overline X)$ and, via these isomorphisms, $\Spec(h)=f$.
\end{prop}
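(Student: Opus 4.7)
The strategy is to invert the construction of Examples \ref{ejemplos}(3)--(4). Given a quasi-compact and quasi-separated scheme $S$, the preceding proposition furnishes a finite locally affine open covering $\U=\{U_1,\dots,U_n\}$. Let $(X,\OO)$ be the associated ringed finite space with canonical morphism $\pi\colon S\to X$ from Example \ref{ejemplos}(3); by the examples right after the schematic definitions, $(X,\OO)$ is a schematic finite space. Since $\OO_p=\OO_S(\pi^{-1}(U_p))$ and, for $p\leq q$, the inclusion $\pi^{-1}(U_q)\hookrightarrow\pi^{-1}(U_p)$ is an open immersion of affine schemes, the induced map $\Spec\OO_q\to\Spec\OO_p$ is an open immersion. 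Hence $X$ lies in $\C_{Schematic}^{open}$.

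The essential surjectivity now reduces to producing a natural isomorphism $\Spec(X)\simeq S$. By construction, $\Spec(X)$ is the colimit in ringed spaces of $\coprod_{p\leq q}S_{pq}\rightrightarrows\coprod_p S_p$, where $S_p=\Spec\OO_p$ is canonically identified with the open affine $\pi^{-1}(U_p)\subseteq S$, $S_{pq}=S_q$, and the two parallel arrows are the identity $S_q\to S_q$ and the open immersion $S_q\hookrightarrow S_p$. On the other hand, $S$ is itself the gluing in ringed spaces of the affine open family $\{\pi^{-1}(U_p)\}_{p\in X}$ along their pairwise intersections. The point is that each intersection $\pi^{-1}(U_p)\cap\pi^{-1}(U_q)$ is covered by the affines $\pi^{-1}(U_r)$ with $r\geq p,q$, so the colimit using the ``one-sided'' gluing data $S_{pq}=S_q$ agrees with the standard colimit using the full overlaps. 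Comparing universal properties yields the desired canonical isomorphism $\Spec(X)\simeq S$.

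For the morphism part, given $f\colon \overline S\to S$ between quasi-compact and quasi-separated schemes, first fix a finite locally affine covering $\U$ of $S$; then, using quasi-compactness and quasi-separatedness of $\overline S$, pick a finite locally affine covering $\overline\U$ of $\overline S$ that refines $f^{-1}(\U)$ (cover each affine open $V\subseteq f^{-1}(U_i)$ by finitely many affines of $\overline S$). By the functoriality recalled in Example \ref{covering}, one obtains a commutative square of ringed spaces
\[
\xymatrix{\overline S \ar[r]^f \ar[d]_{\overline\pi} & S \ar[d]^{\pi}\\ \overline X \ar[r]^h & X}
\]
where $\overline X$ and $X$ are the associated schematic finite spaces and $h$ is the induced morphism; by the examples following the definition of schematic morphism, $h$ is schematic, and both $\overline X$ and $X$ lie in $\C_{Schematic}^{open}$ by Step~1. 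Applying the functor $\Spec$ and invoking Step~2's isomorphisms $\Spec(X)\simeq S$ and $\Spec(\overline X)\simeq \overline S$ (which by naturality identify $\Spec(\pi)$ and $\Spec(\overline\pi)$ with identities), the commutative square is transported to one identifying $\Spec(h)$ with $f$.

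The main obstacle is Step~2, the identification $\Spec(X)\simeq S$. Morally both spaces are obtained by gluing the same affine opens of $S$, but the definition of $\Spec(X)$ uses only the restricted gluing datum $S_{pq}=S_q$ rather than the full fibred intersections $S_p\times_{\Spec A}S_q$; one has to verify, using that every intersection $\pi^{-1}(U_p)\cap\pi^{-1}(U_q)$ is itself the union of the $\pi^{-1}(U_r)$ with $r\geq p,q$, that this ``one-sided'' coequalizer presentation recovers the standard gluing presentation of $S$. Everything else — flatness/schematicity of $X$, open-restrictions, schematicity of $h$ — has already been observed in earlier examples, so only the colimit comparison requires genuine work.
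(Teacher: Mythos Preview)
Your proposal is correct and follows essentially the same route as the paper: take a locally affine finite covering of $S$, form the associated finite space $X$, note it has open restrictions and that $\Spec(X)\simeq S$; for a morphism $f$, refine on the source side and use functoriality of the construction. The only difference is one of emphasis: the paper treats the identification $\Spec(X)\simeq S$ as already established (it is asserted immediately before the proposition as a consequence of the standard gluing of schemes), whereas you flag this colimit comparison as the main point requiring work and sketch why the one-sided coequalizer agrees with the usual gluing.
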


\begin{proof}  Given a quasi-compact and quasi-separated scheme $S$, choose a (locally affine) finite covering $\U$ of $S$ and let $X$ be the associated finite space. Then $X$ has open restrictions and $\Spec(X)$ is canonically isomorphic to $S$.

Let $f\colon S\to \overline S$ be a morphism of schemes, and let $\U$, $\overline \U$ be (locally affine) finite coverings of $S$ and $\overline S$, such that $\U$ is thinner than $f^{-1}(\overline\U)$. If $X$ and $\overline X$ are the associated finite spaces, one has a morphism $h\colon X\to \overline X$ such that $\Spec(h)=f$.
\end{proof}

\subsection{Localization by weak equivalences}$\,$
\medskip

The functor $\Spec\colon \C_{Schematic}^{open}\to \C_{qcqs-Schemes}$ is not faithful, because the finite spaces associated to different coverings of a scheme $S$ are not isomorphic (not even homotopic). In order to avoid this problem, i.e., in order to identify two finite spaces constructed from different coverings of the same scheme, we introduce the notion of a weak equivalence.

\begin{defn} An affine schematic morphism $f\colon X\to Y$ such that $f_*\OO_X=\OO_Y$ is called a {\it weal equivalence}.
\end{defn}

A weak equivalence $f\colon X\to Y$ yields an equivalence between the categories of quasi-coherent modules on $X$ and $Y$, by Theorem \ref{Stein-fact}. A schematic morphism $f\colon X\to Y$ is a weak equivalence if and only if for any open subset $V$ of $Y$, $f\colon f^{-1}(V)\to V$ is a weak equivalence. If $f\colon X\to Y$ is a weak equivalence, then, for any schematic morphism $Y'\to Y$, the induced morphism $X\times_YY'\to Y'$ is a weak equivalence. Composition of weak equivalences is a weak equivalence. If $X$ is affine and $A=\OO(X)$, the natural morphism $X\to \{ *,A\}$ is a weak equivalence.

\begin{ejem}\label{ejem-qc-iso} Let $S$ be a quasi-compact and quasi-separated scheme, $\U=\{ U_i\}$ and $\U'=\{ U'_j\}$ two (locally affine) finite coverings of $S$. Assume that  $\U'$ is thinner than $S$ (this means that, for any $s\in S$, $U'_s\subseteq U_s$). Let $\pi'\colon S\to X'$ and $\pi\colon S\to X$ be the finite spaces associated to $\U'$ and $\U$. Since $\U'$ is thinner that $\U$, one has a morphism $f\colon X'\to X$ such that $f\circ \pi'=\pi$. Then $f$ is a weak equivalence.
\end{ejem}

Let us denote by $\C_{Schematic}^{open}[\W^{-1}]$ the localization of the category $\C_{Schematic}^{open}$ by weak equivalences.
A morphism $X\to Y$ in $\C_{Schematic}^{open}[\W^{-1}]$ is represented by a diagram $$\xymatrix{ & T\ar[dl]_\phi \ar[rd]^f & \\  X &  & Y  }$$
where $\phi$ is a weak equivalence and $f$ a schematic morphism. Two diagrams
$$\xymatrix{ & T\ar[dl]_\phi \ar[rd]^f & \\  X &  & Y  } \qquad \xymatrix{ & T'\ar[dl]_{\phi'} \ar[rd]^{f'} & \\  X &  & Y  }$$ are equivalent (i.e. they represent the same morphism in $\C_{Schematic}^{open}[\W^{-1}]$) if there exist a schematic space $T''$ with open restrictions and weak equivalences $\xi\colon T''\to T$, $\xi'\colon T''\to T'$ such that the diagram
\[ \xymatrix{ & & T''\ar[dl]_\xi \ar[dr]^{\xi'}\\ & T \ar[dl]_\phi \ar[drrr]^f  & & T' \ar[dlll]_{\phi'} \ar[dr]^{f'} &  \\ X & & & & Y
}\] is commutative. We denote by $f/\phi\colon X\to Y$ the morphism in $\C_{Schematic}^{open}[\W^{-1}]$ represented by $f$ and $\phi$. The composition of two morphisms
\[\xymatrix{ & T \ar[dl]_\phi \ar[rd]^f & & T'\ar[dl]_\psi \ar[rd]^g & \\ X & & Y & & Z
}\] is given by $$\xymatrix{ & T\times_YT'\ar[dl]_\xi \ar[rd]^h & \\  X &  & Y  }$$ where $\xi$ (resp. $h$) is the composition of $\phi$ (resp. $g$) with the natural morphism $T\times_YT'\to T$ (resp.  the natural morphism $T\times_YT'\to T$). Notice that $T\times_YT'\to T$ is a weak equivalence because $\psi$ is a weak equivalence; hence $\xi$ is a weak equivalence.

Our aim  now is to show that the functor $\Spec\colon \C_{Schematic}^{open}\to  \C_{qcqs-Schemes}$ factors through the localization  $\C_{Schematic}^{open}[\W^{-1}]$.

\begin{prop}\label{S(affine)} Let $X$ be an affine and schematic finite space, $A=\OO(X)$. Then, the natural morphism of ringed spaces
\[ \Spec (X)\to \Spec A\] is an isomorphism.
\end{prop}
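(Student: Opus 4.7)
The plan is to identify the morphism $\Spec(X) \to \Spec A$ with the comparison map between two coequalizer descriptions of $\Spec A$, using faithfully flat descent together with the structural results on affine schematic spaces. First, by Proposition \ref{derivedcat-affine}(2), the ring map $A \to \prod_{p \in X} \OO_p$ is faithfully flat, so $\coprod_p \Spec \OO_p \to \Spec A$ is an affine faithfully flat cover. Second, by Proposition \ref{local-structure-affine}, $\OO_p \otimes_A \OO_q = \OO_{pq}$ for every $p, q \in X$, hence $\Spec \OO_p \times_{\Spec A} \Spec \OO_q = \Spec \OO_{pq}$. By Grothendieck's faithfully flat descent, $\Spec A$ is the coequalizer in ringed spaces of $\coprod_{p,q} \Spec \OO_{pq} \rightrightarrows \coprod_p \Spec \OO_p$. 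By construction, $\Spec(X)$ is the coequalizer of the smaller diagram $\coprod_{p \leq q} \Spec \OO_q \rightrightarrows \coprod_p \Spec \OO_p$ (recall $\OO_{pq} = \OO_q$ when $p \leq q$), and the canonical morphism $\Spec(X) \to \Spec A$ is precisely the comparison map between these two coequalizers.

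It remains to check that restricting from all pairs to preordered pairs $p \leq q$ does not alter the coequalizer. The essential geometric input is: $X$ is affine and schematic, hence semi-separated by Proposition \ref{schematic-affine}, so each $U_{pq}$ is acyclic; since $X$ is affine, Corollary \ref{afinsemiseparado} then shows that each $U_{pq}$ is itself affine. Applying Proposition \ref{derivedcat-affine}(2) to the affine schematic space $U_{pq}$ yields that $\OO_{pq} \to \prod_{r \in U_{pq}} \OO_r$ is faithfully flat.

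With this in hand, the topological and sheaf-theoretic comparisons reduce to routine bookkeeping. Topologically: if $x \in \Spec \OO_p$ and $y \in \Spec \OO_q$ have equal image in $\Spec A$, there is a point $\xi \in \Spec \OO_{pq}$ lifting both, and by the faithful flatness above, $\xi$ lifts further to some $z \in \Spec \OO_r$ with $r \in U_{pq}$; the two preorder relations $p \leq r$ and $q \leq r$ then identify $x \sim z \sim y$ in $\Spec(X)$, so the two equivalence relations coincide. On the sheaf side: for any open $V \subseteq \Spec A$ with preimages $V_p \subseteq \Spec \OO_p$ and $V_{pq} \subseteq \Spec \OO_{pq}$, the faithful flatness of $\OO_{pq} \to \prod_r \OO_r$ is preserved under base change to $V_{pq}$, so any family $(a_p) \in \prod_p \OO_{\Spec \OO_p}(V_p)$ satisfying the compatibility for pairs $p \leq q$ automatically satisfies the full compatibility on every $\Spec \OO_{pq}$; hence the two equalizers of sections agree.

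The main technical obstacle is precisely this reduction from arbitrary pairs $(p, q)$ to preordered pairs $p \leq q$. Everything pivots on the faithful flatness of $\OO_{pq} \to \prod_{r \in U_{pq}} \OO_r$, itself dependent on the inductive affine-schematic structure of $U_{pq}$ established by Corollary \ref{afinsemiseparado}. Once this is granted, the isomorphism $\Spec(X) \cong \Spec A$ is a formal consequence of faithfully flat descent.
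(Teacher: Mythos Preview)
Your proposal is correct and follows essentially the same route as the paper: both use faithfully flat descent for $A\to\prod_p\OO_p$ together with $\OO_p\otimes_A\OO_q=\OO_{pq}$ to present $\Spec A$ as the coequalizer over all pairs, then invoke the affineness of $U_{pq}$ (via semi-separatedness and Corollary~\ref{afinsemiseparado}) to obtain the faithful flatness of $\OO_{pq}\to\prod_{r\in U_{pq}}\OO_r$, which reduces the gluing over arbitrary pairs to gluing over preordered pairs. Your explicit topological and sheaf-level verification of this last reduction is a bit more detailed than the paper's, which simply observes that the epimorphism $\coprod_{t\in U_{pq}}S_t\to\Spec\OO_{pq}$ makes the two gluings coincide.
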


\begin{proof} Let us denote $B=\underset{p\in X}\prod \OO_p$ and $A\to B$ the natural (injective) morphism. We know that  this morphism is faithfully flat (Proposition \ref{derivedcat-affine}). By faithfully flat descent,  we have an exact sequence (in the category of schemes)
\[ \Spec(B\otimes_AB) \aligned \,_{\longrightarrow} \\ \,^{\longrightarrow} \endaligned \,\Spec B\to \Spec A \] which is also an exact sequence in the category or ringed spaces (it is an easy exercise). Now, following the notations of subsection \ref{Spec}, one has $\Spec (B)=\underset{p\in X}\coprod \Spec\OO_p= \underset{p\in X}\coprod  S_p$, and $\Spec (B\otimes_AB)=\underset{p,q\in X}\coprod \Spec (\OO_p\otimes_A\OO_q) = \underset{p,q\in X}\coprod \Spec \OO_{pq}$, where the last equality is due to Proposition \ref{local-structure-affine}. That is, $\Spec A$ is obtained by gluing the schemes $S_p$ along the schemes $\Spec\OO_{pq}$.

Now, $U_{pq}$ is affine, because  $X$ is affine and schematic, hence semi-separated. Then, the morphism $\OO_{pq}\to\underset{t\in U_{pq}}\prod \OO_t$ is faithfully flat, so $ \underset{t\in U_{pq}}\coprod S_t \to \Spec\OO_{pq}$ is an epimorphism. We have then an exact sequence
\[ \underset{\underset{t\in U_{pq}} { \,_{p,q\in X}}}\coprod S_t  \aligned \,_{\longrightarrow} \\ \,^{\longrightarrow} \endaligned \, \underset{p\in X}\coprod S_p\to \Spec A\]

Notice that $S_t=S_{pt}=S_{qt}$ for any $t\in U_{pq}$. It is now clear that gluing the schemes $S_p$ along the schemes $S_{pq}=\Spec (\OO_{pq})$ (with arbitrary $p,q$) is the same as gluing the schemes $S_p$ along the schemes $S_{pq}$ (with $p\leq q$). This says that $\Spec A=\Spec (X)$.
\end{proof}

\begin{prop}\label{S(qc)} If $f\colon X\to Y$ is a weak equivalence, then $\Spec(f)\colon \Spec (X)\to \Spec (Y)$ is an isomorphism.
\end{prop}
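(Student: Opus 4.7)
Since $f$ is an affine schematic morphism with $f_*\OO_X=\OO_Y$, for each $y\in Y$ the open subspace $X_y:=f^{-1}(U_y)$ is an affine schematic space with $\OO_X(X_y)=(f_*\OO_X)_y=\OO_y$. Applying Proposition~\ref{S(affine)} to the affine schematic spaces $U_y$ and $X_y$ gives canonical isomorphisms
\[ \Spec(U_y)\overset{\sim}\to \Spec\OO_y\overset{\sim}\leftarrow \Spec(X_y),\]
and under these identifications the restriction $\Spec(f|_{X_y})\colon \Spec(X_y)\to \Spec(U_y)$ becomes the identity on $\Spec\OO_y$. Thus the statement holds in the affine case (when $Y$ has a minimum), and the plan is to globalize by patching these local isomorphisms.

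The patching relies on the following compatibility of $\Spec$ with open subspaces: for any schematic space $Z$ and any $z\in Z$, the natural map $\Spec(U_z)=S_z\to \Spec(Z)$ identifies $S_z$ with an open ringed subspace of $\Spec(Z)$, and $\{S_z\}_{z\in Z}$ forms an open cover of $\Spec(Z)$. For $Y$, the hypothesis that $Y$ has open restrictions ensures that the $S_y\hookrightarrow\Spec(Y)$ are open immersions of schemes, so $\Spec(Y)=\bigcup_y S_y$. Applied to the open subspaces $X_y\subseteq X$ (which are themselves schematic) we similarly obtain $\Spec(X_y)\hookrightarrow\Spec(X)$ as open ringed subspaces covering $\Spec(X)$. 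Both claims amount to a direct unwinding of the colimit construction of $\Spec$, carried out on topology and structure sheaves.

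The final step is to identify $\Spec(f)^{-1}(S_y)=\Spec(X_y)$ inside $\Spec(X)$. The inclusion $\supseteq$ is immediate from the factorization $S_p\to S_{f(p)}\hookrightarrow S_y$ for $p\in X_y$ (since $y\leq f(p)$ gives $S_{f(p)}\subseteq S_y$). The reverse inclusion uses that each $S_{y'}\hookrightarrow \Spec(Y)$ is an open immersion: a point of $S_p\subset\Spec(X)$ maps into $S_y$ only when its image in $S_{f(p)}$ already lies there, which forces $p\in X_y$ after taking into account the gluing relations. Combined with the affine case, $\Spec(f)$ is then the gluing of isomorphisms $\Spec(X_y)\overset{\sim}\to S_y$ on an open cover of the target, hence an isomorphism of ringed spaces. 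I expect the main obstacle to be precisely this middle step, namely showing cleanly that $\Spec$ respects the open covers $\{U_y\}$ and $\{X_y\}$ in the strong sense needed for the final gluing: it requires unwinding the colimit definition of $\Spec$ both topologically and on structure sheaves, and in particular controlling the potential identifications in $\Spec(Y)$ between images coming from different $S_{y'}$, which is where the open-restriction hypothesis on $Y$ is really used.
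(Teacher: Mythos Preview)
Your opening moves coincide with the paper's: you identify $X_y:=f^{-1}(U_y)$ as an affine schematic space with global ring $\OO_y$ and invoke Proposition~\ref{S(affine)} to conclude that $\Spec(f_y)\colon \Spec(X_y)\to\Spec(U_y)$ is an isomorphism. From there, however, the strategies diverge.

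The paper does \emph{not} attempt to realize the $S_y$ (resp.\ $\Spec(X_y)$) as open pieces of $\Spec(Y)$ (resp.\ $\Spec(X)$) and then glue. Instead it exploits directly the defining universal property of $\Spec(Y)$ as the colimit $\varinjlim_{y\in Y} S_y$: the isomorphisms $\Spec(f_y)^{-1}$ yield maps
\[
h_y\colon S_y=\Spec(U_y)\xrightarrow{\ \Spec(f_y)^{-1}\ }\Spec(X_y)\longrightarrow \Spec(X),
\]
and a short diagram chase shows $h_y\circ r_{yy'}^*=h_{y'}$ for $y\leq y'$. These compatible maps therefore assemble into a single morphism $h\colon \Spec(Y)\to\Spec(X)$, which is then checked to be inverse to $\Spec(f)$. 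This argument uses no hypothesis of open restrictions on either $X$ or $Y$.

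By contrast, your gluing strategy needs $S_y\hookrightarrow\Spec(Y)$ to be an open immersion (you invoke open restrictions on $Y$ for this, which is not part of the statement), and you further need $\Spec(X_y)$ to sit as an open ringed subspace of $\Spec(X)$ with $\Spec(f)^{-1}(S_y)=\Spec(X_y)$. You correctly flag this ``middle step'' as the main obstacle; in fact, without also assuming open restrictions on $X$, it is not clear that the natural map $\Spec(X_y)\to\Spec(X)$ is an open embedding at all, so the gluing cannot even be set up in the generality of the proposition. The paper's colimit argument sidesteps precisely this difficulty: one never needs to know whether the pieces are open, only that the maps out of them are compatible.
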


\begin{proof} We have to prove that $\Spec$ transforms weak equivalences into isomorphisms. Let $f\colon X\to Y$ be a weak equivalence, and let us see that $\Spec (f)\colon \Spec (X)\to \Spec (Y)$ has an inverse $h\colon \Spec (Y)\to \Spec (X)$. For each $y\in Y$, let us denote $X_y=f^{-1}(U_y)$ and $f_y\colon X_y\to U_y$. We have that  $X_y$ is affine (because $f$ is affine)  and $\OO_{X_y}(X_y)=\OO_y$, because $f_*\OO_X=\OO_Y$. By Proposition \ref{S(affine)}, $\Spec (f_y)\colon \Spec (X_y)\to \Spec (U_y)$ is an isomorphism. Hence we have a morphism
\[ h_y\colon S_y=\Spec (U_y)\to \Spec (X)\] defined as the composition of
$\Spec (f_y)^{-1}\colon \Spec (U_y)\to \Spec (X_y)$ with the natural morphism $\Spec (X_y)\to \Spec(X)$. For any $y\leq y'$, let $r_{yy'}^*\colon S_{y'}\to S_y$ be the morphism induced by $r_{yy'}\colon \OO_y\to\OO_{y'}$. From the commutativity of the diagram

$$\xymatrix{ \Spec (X_{y'})\ar[r]\ar[d]  & \Spec (U_{y'})  \ar[d] \\ \Spec (X_{y})\ar[r]\ar[d]  & \Spec (U_y)  \\ \Spec (X) & }$$ it follows that  $h_y\circ r_{yy'}^*=h_{y'}$, hence one has a morphism $h\colon \Spec (Y)\to \Spec (X)$. It is clear that $h$ is an inverse of $\Spec (f)$.
\end{proof}

This Proposition tells us that the functor $\Spec\colon \C_{Schematic}^{open}\to   \C_{qcqs-Schemes} $ factors through $\C_{Schematic}^{open}[\W^{-1}]$. Hence we obtain a functor \[ \Spec\colon \C_{Schematic}^{open}[\W^{-1}]\to \C_{qcqs-Schemes} \]
and we can state the main result of this section:

\begin{thm}\label{embedding-schemes} The functor  $$\Spec\colon \C_{Schematic}^{open}[\W^{-1}]\to \C_{qcqs-Schemes}$$  is fully faithful and essentially surjective.
\end{thm}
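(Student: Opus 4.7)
Essential surjectivity is immediate from Proposition \ref{fullessentiallysurjective}, together with the observation that the localization functor is the identity on objects.

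For fullness, I would start with a morphism of schemes $g\colon \Spec X\to \Spec Y$ and build a roof representing it. Let $\U_X$ and $\U_Y$ be the locally affine finite coverings of $\Spec X$ and $\Spec Y$ that produce $X$ and $Y$ respectively, via the construction of Example \ref{covering}. The finite collection of opens $\{V\cap g^{-1}(W) : V\in\U_X,\ W\in\U_Y\}$ covers $\Spec X$; each member is a quasi-compact, quasi-separated open subscheme of an affine, hence admits a finite affine covering, and after this further refinement we obtain a locally affine covering $\U_T$ thinner than both $\U_X$ and $g^{-1}(\U_Y)$. Let $T$ be the associated finite space. By Example \ref{ejem-qc-iso} the induced morphism $T\to X$ is a weak equivalence, while the functoriality outlined in Example \ref{covering}, applied to $g$, yields a schematic morphism $T\to Y$. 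Applying $\Spec$ to this roof and invoking Proposition \ref{S(qc)} recovers $g$.

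For faithfulness, I would take two roofs $X\xleftarrow{\phi}T\xrightarrow{f}Y$ and $X\xleftarrow{\phi'}T'\xrightarrow{f'}Y$ inducing the same scheme morphism $g$, and first pass to the fibered product $T\times_X T'$, which is schematic with open restrictions by Theorem \ref{fiberedproduct} and whose two projections to $T,T'$ are weak equivalences (weak equivalences are stable under base change). This reduces the problem to comparing two morphisms $\tilde f,\tilde f'\colon \tilde T\to Y$ over a single weak equivalence $\tilde T\to X$, whose images under $\Spec$ agree. Next, I would apply the fullness construction to $g$ to obtain a further weak-equivalence refinement $\tilde T''\to\tilde T$ together with a canonical morphism $h\colon \tilde T''\to Y$, and argue that both $\tilde f$ and $\tilde f'$ pull back to $h$ along suitable (possibly different) weak equivalences $\tilde T''\to\tilde T$ compatible with the left leg. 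Here is where Grothendieck's faithfully flat descent enters: the cover $\coprod_{t\in \tilde T''}\Spec\OO_t\to\Spec\tilde T''$ is faithfully flat (by Corollary \ref{localproperties-schematic} and Proposition \ref{derivedcat-affine}), and on each affine piece the induced morphism to $Y$ is rigidly determined by the restriction of $g$ together with the choice of target point in $Y$ made by the refinement.

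The main obstacle is the faithfulness step, because two distinct schematic morphisms $T\to Y$ can yield the same scheme morphism on spectra: a point $t\in T$ may be sent to distinct points $y,y'\in Y$ whose open affines $\Spec\OO_y$, $\Spec\OO_{y'}$ both contain the image of $\Spec\OO_t$ in $\Spec Y$. The weak-equivalence refinement mechanism, which permits replacing a point by several refined ones each carrying its own target in $Y$, combined with faithfully flat descent, is precisely what reconciles this combinatorial ambiguity with the rigidity of morphisms of schemes.
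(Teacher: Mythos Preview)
Your plan has a real gap at the very first step of the fullness argument: you write ``Let $\U_X$ and $\U_Y$ be the locally affine finite coverings of $\Spec X$ and $\Spec Y$ that produce $X$ and $Y$ respectively.'' But an arbitrary object of $\C_{Schematic}^{open}$ is \emph{not} given as the finite model of a covering of its spectrum. For instance, any non-$T_0$ schematic space fails this (Example~\ref{covering} always outputs a $T_0$ space), and even among $T_0$ spaces there is no reason for $X$ to coincide with the finite model of $\{S_x\}_{x\in X}$: two distinct points $p,q$ of $X$ may satisfy $S_p=S_q$. So the coverings $\U_X,\U_Y$ you invoke need not exist, and without them your refinement $\U_T$, the weak equivalence $T\to X$, and the morphism $T\to Y$ are all undefined.

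The paper closes exactly this gap with one construction that then drives both fullness and faithfulness: for each $T_0$ schematic space $X$ with open restrictions it builds a \emph{canonical} weak equivalence $f_X\colon X'\to X$, where $X'$ is the finite model of the covering $\{S_x\}_{x\in X}$ of $\Spec X$. The point requiring work is producing the continuous map $\pi_X\colon\Spec X\to X$ with $\pi_X^{-1}(U_p)=S_p$; this uses the lattice morphism $\T_X\to\T_\U$, $U\mapsto\bigcup_{t\in U}S_t$, together with the schematic identity $S_p\cap S_q=\bigcup_{t\in U_{pq}}S_t$. Once $f_X$ exists, faithfulness is immediate and cleaner than your outline: given $f_1,f_2\colon X\to Y$ with $\Spec(f_1)=\Spec(f_2)=:h$, the scheme map $h$ induces a single morphism $h'\colon X'\to Y'$ (since $\{S_x\}$ refines $\{h^{-1}(S_y)\}$), and the diagram forces $f_1\circ f_X=f_Y\circ h'=f_2\circ f_X$. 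No further refinement, no pair of possibly different weak equivalences $\alpha,\alpha'$, and no separate appeal to descent are needed at this stage; faithfully flat descent is used earlier, inside Proposition~\ref{S(affine)}, not in the combinatorial endgame you sketch. Your reduction via $T\times_X T'$ is fine, but the step ``argue that both $\tilde f$ and $\tilde f'$ pull back to $h$ along suitable (possibly different) weak equivalences'' is precisely where you are missing the construction of $f_X$.
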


\begin{proof}  By Proposition \ref{fullessentiallysurjective}, it remains to prove that it is faithful. It is enough to prove that if $f_1,f_2\colon X\to Y$ are two schematic morphisms between schematic spaces with open restrictions such that $\Spec(f_1)=\Spec(f_2)$, then $f_1$ and $f_2$ are equivalent in the localization $\C_{Schematic}^{open}[\W^{-1}]$. We may assume that $X$ and $Y$ are $T_0$.

Let $X$ be a $T_0$-schematic space with open restrictions. For each $x\in X$, $S_x=\Spec\OO_x$ is an open subscheme of $\Spec (X)$.
Thus,  $\U=\{ S_x\}_{x\in X}$ is an open covering of $\Spec (X)$. Let $X'$ the finite space associated to $\U$, and $\pi\colon\Spec(X)\to X'$ the natural morphism. Let us see that there is a natural weak equivalence $f_X\colon X'\to X$. First notice that, for any $p,q\in X$, $S_p\cap S_q=\underset{t\in U_{pq}}\cup S_t$. Let $\T_\U$  be the (finite) topology of $\Spec (X)$ generated by $\U$ and $\T_X$ the topology of $X$. Let us consider the map
\[  \begin{aligned}\psi\colon \T_X &\to \T_\U\\ U&\mapsto \psi(U)=\underset{t\in U}\cup S_t\end{aligned}   \]
It is clear that $\psi(U\cup V)=\psi(U)\cup \psi(V)$  and $\psi(U_p)=S_p$. Moreover, $\psi(U\cap V)= \psi(U)\cap \psi( V)$; indeed, since any $U$ is a union of $U_p's$ and $\psi$ preserves unions, we may assume that $U=U_p$ and $V=U_q$, and then the result follows from the equality  $S_p\cap S_q=\underset{t\in U_{pq}}\cup S_t$. In conclusion, $\psi$ is a  morphism of distributive lattices. Hence, it induces a continuous map  $f_X\colon X'\to X$, whose composition with $\pi$ is a continous map $\pi_X\colon\Spec (X)\to X$  such that $\pi_X^{-1}(U_p)=\psi(U_p)=S_p$. Since $\OO_p=\Gamma (S_p,\OO_{S_p})$,  one has a natural isomorphism $\OO_X\to {f_X}_*\OO_{X'}={\pi_X}_*\OO_{\Spec (X)}$, such that $f_X$ is a morphism of ringed spaces. Let us see that $f_X$ is a weak equivalence.  Let us see that $f_X$ is schematic: by Theorem \ref{sch-preserv-qc2}, it suffices to see that $\RR {f_X}_*\Nc$ is quasi-coherent for any $\Nc$ quasi-coherent module on $X'$. Now, $\Nc=\pi_*\M$ for some quasi-coherent module $\M$ on $\Spec (X)$, so  $\RR {f_X}_*\Nc=\RR {\pi_X}_*\M$ (because $R^i\pi_*\M=0$ for $i>0$) and $\RR {\pi_X}_*\M={\pi_X}_*\M$ because $\pi_X^{-1}(U_p)=S_p$ is an affine scheme. Finally, the quasi-coherence of ${\pi_X}_*\M$ is also a consequence of the equality $\pi_X^{-1}(U_p)=S_p$ and the hypothesis that $S_q\to S_p$ is an open immersion. Let us conclude that $f_X$ is a weak equivalence; $f_X$ is affine: $f_X^{-1}(U_p)$ is affine because $\pi^{-1}(f_X^{-1}(U_p))=S_p$ is affine. Since  ${f_X}_*\OO_{X'}=  \OO_X$, we are done.

Now, let $f_1,f_2\colon X\to Y$ be two schematic morphisms between $T_0$-schematic spaces with open restrictions, such that $\Spec(f_1)=\Spec (f_2)$. Let us denote $h=\Spec(f_i)$. Since $\{ S_x\}$ is thinner that $\{ h^{-1}(S_y)\}$, it induces a morphism $h'\colon X'\to Y'$ and one has a commutative  diagram
$$\xymatrix{
X'\ar[r]^{h'} \ar[d]_{f_X}  & Y'\ar[d]^{f_Y} \\ X \ar[r]^{f_i} & Y}$$
so $f_1$ and $f_2$ are equivalent (since they both are  equivalent to $h'$).
\end{proof}

Let us denote by $\C_{AffSchSp}$ the localization of the category of affine schematic spaces (and schematic morphisms) by weak equivalences. Let us see that $\C_{AffSchSp}$ is equivalent to the category of affine schemes.

\begin{thm}\label{affinescheme-affinespace} The functor $$\aligned \Phi\colon \C_{AffineSchemes}&\to \C_{AffSchSp} \\ \Spec A&\mapsto (*,A)\endaligned$$ is an equivalence.
\end{thm}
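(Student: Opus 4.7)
The plan is to prove $\Phi$ is essentially surjective and fully faithful. Recall that a punctual ringed space $(*,A)$ is trivially schematic and affine (any module on a one-point space is quasi-coherent, and $H^i=0$ for $i>0$), and any morphism $(*,A)\to(*,B)$ is schematic; moreover, such a morphism is exactly a ring homomorphism $B\to A$. Hence $\Phi$ is well defined, and we already know from the introduction of $(*,\,\cdot\,)$ that $\Hom((*,A),(*,B))$ equals $\Hom_{\mathrm{Rings}}(B,A)=\Hom_{\mathrm{AffSch}}(\Spec A,\Spec B)$.

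For essential surjectivity, let $X$ be an affine schematic space and set $A=\OO(X)$. The paper has already noted (in the paragraph introducing weak equivalences, just before Example \ref{ejem-qc-iso}) that when $X$ is affine the natural morphism $\pi\colon X\to(*,A)$ is a weak equivalence. Therefore $\pi$ is invertible in $\C_{AffSchSp}$, which shows $X\simeq \Phi(\Spec A)$ in the localization.

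For fullness, let $\alpha\colon(*,A)\to(*,B)$ be a morphism in $\C_{AffSchSp}$. It is represented by a roof
\[
(*,A)\xleftarrow{\phi} T \xrightarrow{f}(*,B),
\]
with $T$ an affine schematic space, $\phi$ a weak equivalence and $f$ schematic. Since the target of $\phi$ is punctual, the condition $\phi_*\OO_T=\OO_{(*,A)}$ means that the ring homomorphism $\phi^{\#}\colon A\to \OO(T)$ is an isomorphism. Now $f$ corresponds to a ring homomorphism $f^{\#}\colon B\to\OO(T)$; define $g^{\#}:=(\phi^{\#})^{-1}\circ f^{\#}\colon B\to A$ and let $g\colon(*,A)\to(*,B)$ be the associated morphism of punctual spaces. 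Then $f=g\circ\phi$ by construction. Taking $T''=T$ with $\xi=\mathrm{id}_T$ and $\xi'=\phi$ (both weak equivalences) yields a common refinement showing that the roof $(\phi,f)$ is equivalent to $(\mathrm{id}_{(*,A)},g)$. Thus $\alpha=g/\mathrm{id}$ lies in the image of $\Phi$.

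For faithfulness, suppose $g_1,g_2\colon(*,A)\to(*,B)$ become equal in $\C_{AffSchSp}$. Then the roofs $(\mathrm{id},g_1)$ and $(\mathrm{id},g_2)$ are equivalent: there exist an affine schematic space $T''$ and weak equivalences $\xi,\xi'\colon T''\to(*,A)$ with $\mathrm{id}\circ\xi=\mathrm{id}\circ\xi'$ (so $\xi=\xi'$) and $g_1\circ\xi=g_2\circ\xi$. Writing $\xi^{\#}\colon A\to\OO(T'')$, which is an isomorphism because $\xi$ is a weak equivalence into a punctual space, the equality $g_1\circ\xi=g_2\circ\xi$ becomes $\xi^{\#}\circ g_1^{\#}=\xi^{\#}\circ g_2^{\#}$, so $g_1^{\#}=g_2^{\#}$ and $g_1=g_2$. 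This completes the proof; there is no serious obstacle, the only subtlety being the careful bookkeeping of which direction the ring maps go through the roof.
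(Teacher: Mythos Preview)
Your proof is correct and follows the same approach as the paper: both reduce to showing that $\Hom_{\C_{AffSchSp}}((*,A),(*,B))=\Hom_{\text{rings}}(B,A)$, using that the canonical map $X\to(*,\OO(X))$ is a weak equivalence for affine $X$. The paper simply asserts this Hom identification as ``clear'' and packages the result as an explicit inverse functor $X\mapsto\Spec\OO(X)$, whereas you spell out the fullness and faithfulness via the roof description; this is a matter of exposition, not of strategy.
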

\begin{proof} If $X$ is an affine schematic space and $A=\OO(X)$, then $X\to (*,A)$ is a weak equivalence and $\Spec(X)=\Spec A$ (Proposition \ref{S(affine)}). Now, for any affine schematic spaces $X$ and $Y$, with global functions $A$ and $B$, one has:
\[ \Hom_{\C_{AffSchSp} }(X,Y)=\Hom_{\C_{AffSchSp} }((*,A),(*,B)) \]
and it is clear that $\Hom_{\C_{AffSchSp} }((*,A),(*,B))=\Hom_{\text{rings}}(B,A)=\Hom_{\text{schemes}}(\Spec A,\Spec B)$.
One concludes that the functor $\C_{AffSchSp} \to \C_{AffineSchemes}$, $X\mapsto \Spec \OO(X)$, is an inverse of $\Phi$.
\end{proof}

\begin{rem} (A schematic space which is not a scheme). Let $(X,\OO)$ be the following finite space: it has two closed points $p,q$ and one generic point $g$; the sheaf$\OO$ is given by  $\OO_p=\OO_q=k[x]$, $\OO_g=k(x)$, and the morphisms $r_{pg}$, $r_{qg}$ are the natural inclusions. Then $X$ is an schematic space of dimension 1. One can easily calculate its cohomology:
\[ H^0(X,\OO)=k[x],\qquad H^1(X,\OO)=k(x)/k[x]\]
Now, $\Spec (X)$ is the gluing of two affine lines (i.e., $\Spec k[x]$) along their generic point. This is not a scheme (though it is a locally ringed space).

\end{rem}

\end{document}